\title{\vspace{-.75in}
Stretch laminations and hyperbolic Dehn surgery}
\date{}
\newcommand\RedeclareMathOperator{%
	\@ifstar{\def\rmo@s{m}\rmo@redeclare}{\def\rmo@s{o}\rmo@redeclare}%
}
\newcommand\rmo@redeclare[2]{%
	\begingroup \escapechar\m@ne\xdef\@gtempa{{\string#1}}\endgroup
	\expandafter\@ifundefined\@gtempa
	{\@latex@error{\noexpand#1undefined}\@ehc}%
	\relax
	\expandafter\rmo@declmathop\rmo@s{#1}{#2}}
\newcommand\rmo@declmathop[3]{%
	\DeclareRobustCommand{#2}{\qopname\newmcodes@#1{#3}}%
}
\DeclareSymbolFont{bbold}{U}{bbold}{m}{n}
\newtheorem{thm}{Theorem}
\numberwithin{thm}{section}
\newtheorem{q}{Question}
\newtheorem{answer}{Answer}
\newtheorem{lem}[thm]{Lemma}
\newtheorem{prop}[thm]{Proposition}
\declaretheorem[style=remark]{remark}
\RedeclareMathOperator{\fill}{\mathtt{fill}}
\DeclareMathOperator{\inj}{inj}
\DeclareMathOperator{\diam}{diam}
\DeclareMathOperator{\radius}{radius}
\DeclareMathOperator{\sys}{sys}
\DeclareMathOperator{\area}{area}
\DeclareMathOperator{\len}{{len}}
\newcommand{\Z}{{\mathbb Z}}
\newcommand{\R}{{\mathbb R}}
\newcommand{\cC}{\mathcal{C}}
\newcommand{\cG}{\mathcal{G}}
\newcommand{\cQ}{\mathcal{Q}}
\newcommand{\cT}{\mathcal{T}}
\newcommand{\e}{\varepsilon}
\newcommand{\g}{\gamma}
\renewcommand{\a}{\alpha}
\newcommand{\fp}{\mathfrak p}
\newcommand{\interior}{{\mathsf{int}}}
\newcommand{\norm}[1]{\left\lVert#1\right\rVert}
\newcommand{\into}{\hookrightarrow}
\renewcommand{\d}{ \partial}
\renewcommand{\stretch}{{\mathsf{stretch}}}
\newcommand{\bsigma}{\overline{\sigma}}
\newcommand{\nlen}{\widehat{\len}}
\newcommand{\core}{\mathsf{core}}
\newcommand{\bs}{{\bm{s}}}
\author{Cameron Gates Rudd\footnote{Max-Planck-Institut für Mathematik in Bonn, email:\href{mailto:rudd@mpim-bonn.mpg.de}{rudd@mpim-bonn.mpg.de}}}
\begin{document}

\maketitle

\begin{abstract}
     We study maximal stretch laminations associated to certain best Lipschitz circle valued maps in Dehn surgery families of hyperbolic 3-manifolds. For these maps, we give a criterion based on the Thurston norm and Dehn filling slope length to determine when such a stretch lamination is a union of Dehn filling core curves. We use this to show there exist infinitely many examples where the homotopy class of the circle valued map includes a fibration and where the laminations have only closed leaves. This gives information about non-maximal horospherical orbit closures in the infinite cyclic covers associated to these fibrations.
\end{abstract}

\section{Introduction}

A map $f:M\to\R/\Z$ from a closed Riemannian manifold to the unit circle is best Lipschitz if it minimizes the Lipschitz constant among all homotopic maps. Associated to these maps is the set of points at which the Lipschitz constant is realized, called the stretch set.
For hyperbolic manifolds, the stretch set of any nontrivial best Lipschitz map contains a nonempty geodesic lamination $\Lambda_0$ with 1-dimensional leaves that depends only on the homotopy class. Homotopy classes of circle valued maps exactly correspond to cohomology classes $\rho \in H^1(M)$, so this associates to each nontrivial cohomology class a geometric object. We denote by $\Lambda(\rho)\subset\Lambda_0$ the maximal chain recurrent sublamination of $\Lambda_0$ and call it the cohomology stretch lamination of $\rho$; this is the main object of study in this paper. 

The topological, analytic, and dynamical properties of best Lipschitz maps and their stretch laminations have recently been studied by Gueritaud-Kassel \cite{GueritaudKassel}, Fathi-Siconolfi \cite{FathiSiconolfi}, Daskalopoulos-Uhlenbeck \cite{DaskalopoulosUhlenbeck}, and Farre-Landesberg-Minsky \cite{FarreLandesbergMinsky}. Gueritaud-Kassel's and Daskalopoulos-Uhlenbeck's work was motivated by Thurston's work on stretch maps between surfaces \cite{ThustonStretch}. Fathi-Siconolfi's work, in part, constructs $C^0$ 1-forms realizing Federer-Gromov's stable norm along a geodesic lamination.
Farre-Landesberg-Minsky relate these laminations to horospherical orbit closures in infinite cyclic covers. 

Previous work (see \cite{Eberlein}, \cite{Dalbo}, \cite{MaucourantSchapira}) identified non-maximal horospherical orbit closures with quasiminimizing points. These are points $v$ in the frame bundle whose orbits $\varphi^t(v)$ under the geodesic frame flow $\varphi^t$ escape to infinity at the fastest rate possible with only an additive error: $$d(\varphi^t(v),v)\geq t-B~\text{ for all } t\geq0$$ for some constant $B$. Such a trajectory $\{\varphi^t(v)\}_{t\geq0}$ is called a quasiminimizing ray.

Farre-Landesberg-Minsky show that in a hyperbolic infinite cyclic cover $\widetilde M_\rho\to M$ associated to a cohomology class $\rho\in H^1(M)$, the set of all accumulation points in the frame bundle of the base manifold $M$ of projected quasiminimizing rays from the frame bundle of the cover $\widetilde M_\rho$ projects to a sublamination of the cohomology stretch lamination $\Lambda(\rho)\subset M$. The set of lifts of these accumulation points in the frame bundle of $M$ to the frame bundle of the cover $\widetilde M_\rho$ is the $\omega$-limit set mod $\Z$ of the set of quasiminimizing points; this is denoted  $\cQ_\omega$. 

In the setting of infinite cyclic covers of closed hyperbolic surfaces, Farre-Landesberg-Minsky show the projection of $\cQ_\omega$ to the compact base is exactly the corresponding cohomology stretch lamination.
Furthermore, in the 2-dimensional setting, Farre-Landesberg-Minsky give a complete description of which measured laminations appear as cohomology stretch laminations, and thus also as projections of $\cQ_\omega$; namely, all that could.

One dimension higher however, nothing is known, though understanding the set of quasiminimizing points in geometrically infinite hyperbolic 3-manifolds has been of interest. In fact, the published literature contains a mistaken characterization of this set; see Remark \ref{remark:error} below. The goal of this paper is to rectify this lack of 3-dimensional examples by identifying cohomology stretch laminations in some Dehn surgery families.

In particular, we are interested in the following problem.

\begin{q}Fix a cusped hyperbolic 3-manifold $W$ and let $\bs_i$ be a sequence of Dehn filling slopes such that $M_i = W(\bs_i)$ is closed. Suppose that these slopes are compatible with cohomology classes $\rho_i^W\in H^1(W)$ in that they extend to the filled manifold $M_i$; we will call such classes surgery classes and the slopes $\bs_i$ compatible. What can we say about cohomology stretch laminations of the extensions $\rho_i$ of the surgery classes $\rho_i^W$?
\end{q}

Informally, this paper provides the following (partial) answer.

\begin{answer} For large Dehn fillings and compatible cohomology classes ``with nonempty boundary,'' as long as the Thurston norm is not much larger than the length of the filling slope, the cohomology stretch laminations of extended classes are unions of Dehn filling cores. In particular, they are geodesic links.
\end{answer}

This leads to geometrically infinite 3-dimensional examples where the $\omega$-limit set mod $\Z$ of quasiminimizing points projected to the compact base manifold can be exactly identified; see Theorem \ref{thm:fiberedexamples} and the subsequent remark. Topological information about these laminations then has consequences for the behavior of horospherical orbits in the corresponding infinite cyclic covers. For instance, because every leaf is uniformly isolated, Corollary 7.23 in \cite{FarreLandesbergMinsky} says that for examples in which the stretch laminations are geodesic links, such as those in Theorem \ref{thm:fiberedexamples}, the projection of the horospherical orbit of a frame $v$ in $\cQ_\omega$ for the corresponding cover $\widetilde M_\rho$ does not accumulate at the projection of $v$ in $\widetilde M_{\rho}$.

\begin{remark}\label{remark:error}
    Fibered examples in which the cohomology stretch lamination is a geodesic link, again as in Theorem \ref{thm:fiberedexamples}, are counterexamples to Theorem\nopagebreak~1.3  in \cite{LecuireMj}. Section 3 in \cite{LecuireMj} explains how their Theorem 1.3 relates to the set of quasiminimizing points. Combining that discussion in the fibered case with the results of \cite{FarreLandesbergMinsky}, the $\omega$-limit set mod $\Z$ of quasiminimizing points, which is a sublamination of $\Lambda(\rho_i)$, contains the geodesic tightening of the suspension flow associated to the pseudoAnosov monodromy. The proof was previously discovered by Farre and Minsky to be incomplete; see the Erratum in the arXiv version of \cite{LecuireMj}.
    \end{remark}

\subsection{Results}
    Our analysis relies on understanding how stretch laminations interact with certain thick-thin decompositions.
    In Section \ref{sec:stretchtubes}, we examine how stretch laminations behave near large tubes. This allows us to decompose the problem of finding the stretch lamination into distinct thick and thin parts in large Dehn fillings. 
    For this, understanding the complexity of the extended cohomology classes $\rho_\bs$ for a Dehn filled manifold $M= W(\bs)$ in the thick and thin parts separately is key.

    To do this, it is useful to have comparisons between geometric and topological norms on the cohomology of hyperbolic 3-manifolds. Results of this sort have previously been obtained by Brock-Dunfield in \cite{BD} and Han in \cite{Hans}. The norm comparisons proved in Section \ref{sec:thickstablethurstoncomparison}, Theorem \ref{thm:chainnormineqs} in particular, are similar to theirs. \footnote{We need an analogue of their lower bound on the Thurston norm, but for relative classes in the thick part of a cusped manifold. It's worth noting their constants are explicit and depend only on the systole; that is not the case here. Our comparison depends on the cusped manifold and a choice of Margulis constant.}

    One can measure the geometric complexity of an extended class $\rho_\bs\in H^1(M)$ in the Dehn filling $M=W(\bs)$ using the (dual) stable norm, also called the comass, $$\norm{\rho_\bs}_0:=\sup\limits_{\g\in \pi_1 M}\frac{\rho_\bs(\g)}{|\g|_{M}},$$ where $|\g|_{M}$ is the minimal length of a loop in the free homotopy class defined by $\g$. For a given Margulis constant $\mu$, we can also define thick stable norms $$\norm{\rho_\bs}_\mu = \sup\limits_{\g\in\pi_1M}\frac{\rho_\bs(\g)}{|\g|_{M_{\geq\mu}}},$$ where the length $|\g|_{M_{\geq\mu}}$ is defined as the minimal length of a loop in the free homotopy class of loops representing $\g$ with the restriction that they must stay in the $\mu$-thick part of $M$.

    Associated to a filling slope $\bs$ is a certain normalized length determined by the Euclidean geometry of the cusps; see Section \ref{sec:dehnfill} for the definition.
    We relate these complexity measures with cohomology stretch laminations in the following theorem.

\begin{restatable}{thm}{laminationsintubes}\label{thm:laminationsintubes}
    Let $W$ be a cusped hyperbolic 3-manifold. There is a Margulis constant $\mu$ satisfying $\mu<\min\{\sys(W)/4,\log(3)/1.2\}$ and a constant $L>0$ both depending only on $W$ such that if $\rho$ is a nontrivial surgery class compatible with a complete slope $\bs$ with total normalized length $\nlen(\bs)>L$ and $\rho_\bs$ is the extended class, then either $\Lambda(\rho_\bs)$ is a union of Dehn filling core curves, or $||\rho_\bs||_0\leq 3||\rho_\bs||_\mu$.
\end{restatable}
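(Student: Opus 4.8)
The plan is to combine two largely independent ingredients: the analysis of best Lipschitz maps near deep tubes carried out in Section~\ref{sec:stretchtubes}, and a soft recurrence argument comparing the comass $\|\rho_\bs\|_0$ with the thick stable norm $\|\rho_\bs\|_\mu$. First I would pin down the constants and the thick--thin decomposition they produce. Choose $\mu$ small enough to satisfy $\mu<\min\{\sys(W)/4,\ \log(3)/1.2\}$ and small enough for the tube analysis of Section~\ref{sec:stretchtubes} to apply, and then take $L$ large, both depending only on $W$. For a complete compatible slope $\bs$ with $\nlen(\bs)>L$, the hyperbolic Dehn surgery theorem together with the Neumann--Zagier and Hodgson--Kerckhoff estimates give that $M=W(\bs)$ is hyperbolic, that its core geodesics $c_1,\dots,c_k$ are short, and that the radius of the maximal embedded tube about each $c_j$ grows without bound as $\nlen(\bs)\to\infty$. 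Combined with the geometric convergence of the thick part of $M$ to that of $W$ and the bound $\mu<\sys(W)/4$, this forces the $\mu$-thin part of $M$ to consist of disjoint Margulis tubes $\mathbb T_1,\dots,\mathbb T_k$ about the $c_j$, each sitting so deep inside an embedded tube that the hypotheses of the tube analysis hold with room to spare; concretely I would arrange that even the $2\mu$-Margulis tube about each $c_j$ is ``deep'' in the required sense.

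Next I would read off the structure of $\Lambda(\rho_\bs)$. Since $\rho$ is nontrivial and $\bs$-compatible, the extension $\rho_\bs\in H^1(M)$ is nontrivial, so there is a best Lipschitz map $f\colon M\to\R/\Z$ with $\mathrm{Lip}(f)=\|\rho_\bs\|_0>0$ whose stretch set contains $\Lambda_0\supseteq\Lambda(\rho_\bs)$, and along every leaf of $\Lambda_0$, parametrized by arc length in the stretch direction, a lift of $f$ increases at the constant rate $\|\rho_\bs\|_0$. Applying the tube analysis to each deep tube $\mathbb T_j$ shows that a leaf of $\Lambda(\rho_\bs)$ that meets $\mathbb T_j$ (indeed that meets the $2\mu$-tube) is either forced to equal the core $c_j$, or else produces a loop $\gamma\subset M_{\ge\mu}$ with $\rho_\bs(\gamma)/|\gamma|_{M_{\ge\mu}}\ge\|\rho_\bs\|_0/3$, in which case $\|\rho_\bs\|_0\le 3\|\rho_\bs\|_\mu$ and we are done. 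So we may assume no leaf that meets a tube produces such a loop; then every leaf of $\Lambda(\rho_\bs)$ either lies in the $2\mu$-thick part $M_{\ge 2\mu}$ or equals one of the $c_j$, and hence $\Lambda(\rho_\bs)=\Lambda_{\mathrm{thick}}\sqcup\Lambda_{\mathrm{core}}$ with $\Lambda_{\mathrm{thick}}\subseteq M_{\ge 2\mu}$ closed and invariant under the stretch flow and $\Lambda_{\mathrm{core}}$ a (possibly empty) sublink of $c_1\cup\dots\cup c_k$.

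Now for the dichotomy. If $\Lambda_{\mathrm{thick}}=\emptyset$ then $\Lambda(\rho_\bs)=\Lambda_{\mathrm{core}}$ is a nonempty union of Dehn filling core curves, which is the first alternative. Otherwise $\Lambda_{\mathrm{thick}}$ is a nonempty compact invariant set, so it contains a minimal set $\cK\subseteq M_{\ge 2\mu}$; fix a leaf $\ell\subseteq\cK$. If $\ell$ is a closed geodesic, take $\gamma=\ell$: then $|\gamma|_{M_{\ge\mu}}\le\len(\ell)$ while $\rho_\bs(\gamma)=\|\rho_\bs\|_0\len(\ell)$, so $\|\rho_\bs\|_\mu\ge\|\rho_\bs\|_0$. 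Otherwise, by minimality of $\cK$, for each $\e<\mu$ there are arbitrarily large $\tau$ with $d(\ell(\tau),\ell(0))<\e$; closing $\ell|_{[0,\tau]}$ with the short geodesic arc between its endpoints gives a loop $\gamma$ that stays in $M_{\ge\mu}$ --- because $\cK$ lies at distance $\ge\mu$ from the $\mu$-thin part, by the Lipschitz dependence of injectivity radius on the basepoint --- with $|\gamma|_{M_{\ge\mu}}\le\tau+\e$; lifting $f\circ\gamma$ to $\R$ and using that $f$ grows at rate $\|\rho_\bs\|_0$ along $\ell$ and is $\|\rho_\bs\|_0$-Lipschitz on the arc gives $\rho_\bs(\gamma)\ge\|\rho_\bs\|_0(\tau-\e)$, so $\|\rho_\bs\|_\mu\ge\|\rho_\bs\|_0(\tau-\e)/(\tau+\e)\to\|\rho_\bs\|_0$ as $\tau\to\infty$. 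In either case $\|\rho_\bs\|_\mu\ge\|\rho_\bs\|_0$, so in particular $\|\rho_\bs\|_0\le 3\|\rho_\bs\|_\mu$, which is the second alternative.

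The main obstacle is the tube analysis invoked in the second step: proving that inside a sufficiently long thin tube a maximal-stretch leaf of the chain recurrent lamination can only run along the core geodesic, and that a leaf which instead crosses such a tube forces a loop realizing a definite fraction of the comass in the thick part. This is the substantial geometric content of Section~\ref{sec:stretchtubes}, and I expect it is where the constant $3$ and the threshold $\log(3)/1.2$ are calibrated; granting it, the Dehn-surgery bookkeeping in the first step and the recurrence argument in the third are routine.
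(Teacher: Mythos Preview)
Your proposal is correct and follows the same overall strategy as the paper, but it is organized more elaborately than necessary and slightly mischaracterizes what Section~\ref{sec:stretchtubes} provides.

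The paper's proof is a two-line reduction: use Lemma~\ref{lem:diambound} to choose $D$ with $D\ge 8\diam(\partial\cT)+2\log 4$ for all maximal tubes in all hyperbolic fillings of $W$, use Proposition~\ref{prop:deepmargulis} to get $\mu$ (as small as one likes, in particular below $\min\{\sys(W)/4,\log(3)/1.2\}$) and $L$ so that $\mu$ is $L$-persistently $(2D,\log 4)$-deep, and then invoke Theorem~\ref{thm:onlycores} directly. That theorem already states the full dichotomy: if $\Lambda(\rho_\bs)$ meets $M_{\ge\mu}$ at all, then $\|\rho_\bs\|_0\le 3\|\rho_\bs\|_\mu$; otherwise $\Lambda(\rho_\bs)$ is a union of cores.

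Your step~2 attributes to the tube analysis a per-leaf dichotomy (``a leaf that meets a tube is either the core or produces a thick loop with ratio at least $1/3$''), but Theorem~\ref{thm:onlycores} is not proved leaf by leaf: it starts from \emph{any} point of $\Lambda$ in the thick part and runs chain recurrence globally, the approximating loops possibly following many leaves. Once you state it that way, your step~3 recurrence argument for $\Lambda_{\mathrm{thick}}$ is correct but redundant---it is exactly the ``loops stay in the thick part'' subcase already absorbed into the proof of Theorem~\ref{thm:onlycores}, yielding $\|\rho_\bs\|_0=\|\rho_\bs\|_\mu$. Your minimal-set argument there is fine (orientability of $\Lambda(\rho_\bs)$ makes the leafwise flow well defined, and minimality gives positive recurrence), but it duplicates work.

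Two small corrections. The threshold $\log(3)/1.2$ is not calibrated by the tube analysis; it comes from the hypothesis $\e<\log 3$ and the $1.2$ scaling in the Effective Drilling Theorem~\ref{thm:drill}, which is used (via Proposition~\ref{prop:deepmargulis} and later Lemma~\ref{lem:drillnorm}) to compare thick parts of $W$ and $W(\bs)$. The constant $3$, on the other hand, does arise from the strand-length estimate in the proof of Theorem~\ref{thm:onlycores}. Also, the paper's depth bookkeeping is in terms of a $(2D,\log 4)$-deep Margulis constant (distance from the $\mu$-torus to the maximal tube boundary and to the core), not via a $2\mu$-versus-$\mu$ comparison as in your sketch; functionally these play the same role.
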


See Theorem \ref{thm:onlycores} for a version of this result for closed manifolds without reference to Dehn fillings. The basic idea in proving Theorem \ref{thm:onlycores} is that (under the hypotheses of the theorem) if there is a leaf of $\Lambda(\rho_\bs)$ intersecting the thick part, then either the stable norm is realized in the thick part or there is a leaf of $\Lambda(\rho_\bs)$ that travels from the thick part deep into a Margulis tube. This time spent traveling into the tube enables a comparison between the thick and regular stable norms. For Dehn fillings with sufficiently long slope, the hypotheses of Theorem \ref{thm:onlycores} hold, leading to Theorem \ref{thm:laminationsintubes}.

    For a sufficiently small Margulis constant $\mu$, the $\mu$-thick stable norm of the extension $\rho_\bs\in H^1(W(\bs))$ of a surgery class $\rho\in H^1(W)$ for $\bs$ a large Dehn filling slope is controlled by the Thurston norm of the class $\rho\in H^1(W)$.

    \begin{restatable}{thm}{thickstableThurston}\label{thm:thickstableThurston} Let $W$ be a cusped hyperbolic 3-manifold and $\mu<\min\{\sys(W)/4,\log(3)/1.2\}$ a Margulis constant. There are constants $C,L>0$ depending only on $W$ and $\mu$ such that if $\rho\in H^1(W)$ is a surgery class with compatible slope $\bs$ satisfying $\nlen(\bs)>L$ and $\rho_\bs$ is the extended class, then  $||\rho_\bs||_\mu\leq C||\rho||_{Th}.$
    \end{restatable}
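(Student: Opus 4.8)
The plan is to avoid re-proving anything about surfaces and instead to reduce $\norm{\rho_\bs}_\mu$ in a large filling to the analogous bound for the cusped manifold $W$ itself, which is furnished by Theorem~\ref{thm:chainnormineqs}, and then to carry that bound across the near-isometry between the $\mu$-thick part of $M=W(\bs)$ and a fixed truncation of $W$. Fix once and for all $\mu'=\mu/2$; since $\mu<\min\{\sys(W)/4,\log(3)/1.2\}$, the constant $\mu'$ is again a Margulis constant for which the $\mu'$-thin part of $W$ consists only of cusp neighborhoods, and Theorem~\ref{thm:chainnormineqs}, applied with this $\mu'$, provides a constant $C_W$ depending only on $W$ and $\mu$ with $\norm{\sigma}_{\mu',W}\le C_W\norm{\sigma}_{Th}$ for every $\sigma\in H^1(W)$, where $\norm{\sigma}_{\mu',W}=\sup_\g \sigma(\g)/|\g|_{W_{\ge\mu'}}$ denotes the $\mu'$-thick stable norm of the cusped manifold (possibly after the standard comparison between the chain norm of Theorem~\ref{thm:chainnormineqs} and this thick stable norm).

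The geometric ingredient, already in play in Sections~\ref{sec:stretchtubes} and~\ref{sec:dehnfill}, is the following standard consequence of Dehn surgery geometry: there is $L>0$, depending only on $W$ and $\mu$, such that if $\nlen(\bs)>L$ then $M=W(\bs)$ is hyperbolic, the cores of the filling solid tori are the only geodesics of $M$ meeting $M_{<\mu}$, and the re-drilling homeomorphism $\Psi\colon W\to M\setminus(\text{cores})$ can be chosen $(1+\e)$-bi-Lipschitz on $W_{\ge\mu'}$ with $\e<1$, carrying $W_{<\mu'}$ into $M_{<\mu}$, and restricting $\rho_\bs$ to $\rho$ on $\pi_1$. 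Together these force $M_{\ge\mu}\subset\Psi(W_{\ge\mu'})$: if $y\in M_{\ge\mu}$ had $\Psi^{-1}(y)\in W_{<\mu'}$, then $y\in\Psi(W_{<\mu'})\subset M_{<\mu}$, a contradiction.

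With this in hand I would bound $\rho_\bs(\g)\le 2C_W\norm{\rho}_{Th}\,\len(\g)$ for \emph{every} loop $\g$ contained in $M_{\ge\mu}$; taking the infimum over such representatives of a fixed class and then the supremum over classes yields $\norm{\rho_\bs}_\mu\le 2C_W\norm{\rho}_{Th}$, proving the theorem with $C=2C_W$. Given such a $\g$, the containment $M_{\ge\mu}\subset\Psi(W_{\ge\mu'})$ lets me write $\g=\Psi(\bar\g)$ with $\bar\g=\Psi^{-1}(\g)$ a loop in $W_{\ge\mu'}$; then $\len(\bar\g)\le(1+\e)\len(\g)$, and $\rho_\bs(\g)=\rho(\bar\g)$ because $\Psi$ carries $\rho$ to the restriction of $\rho_\bs$. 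Since $\bar\g$ is itself a loop lying in $W_{\ge\mu'}$, it is a competitor for $|[\bar\g]|_{W_{\ge\mu'}}$, so
\[
\rho_\bs(\g)=\rho(\bar\g)\le\norm{\rho}_{\mu',W}\,\len(\bar\g)\le C_W\norm{\rho}_{Th}\,(1+\e)\,\len(\g)\le 2C_W\norm{\rho}_{Th}\,\len(\g),
\]
using Theorem~\ref{thm:chainnormineqs} and $\e<1$.

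The only real obstacle is the geometric ingredient of the second paragraph, and it is exactly where the hypothesis $\nlen(\bs)>L$ is genuinely used: one needs the $\mu$-thick part of \emph{every} sufficiently large filling to be uniformly (independently of the slope) bi-Lipschitz to a fixed truncation of $W$, and simultaneously the fixed $\mu'$-thin part of $W$ to be swallowed by the $\mu$-thin part of $M$. These are by now routine consequences of the Dehn filling estimates (Neumann--Zagier, the Hodgson--Kerckhoff cone-deformation bounds, and the effective bi-Lipschitz drilling/filling statements of Brock--Bromberg and Magid) assembled in the earlier sections; beyond invoking Theorem~\ref{thm:chainnormineqs}, no further analysis of stretch laminations or of norm-minimizing surfaces is needed for this theorem.
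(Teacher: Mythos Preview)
Your proposal is correct and follows essentially the same route as the paper: first apply Theorem~\ref{thm:chainnormineqs} in $W$ to bound a thick stable norm by the Thurston norm, and then transport that bound to $W(\bs)_{\ge\mu}$ via the bi-Lipschitz drilling/filling comparison for long slopes. The paper packages the second step separately as Lemma~\ref{lem:drillnorm} (using the Drilling Theorem with $J=2$ and the pair $\mu,1.2\mu$ rather than your $\mu,\mu/2$), so that the proof of Theorem~\ref{thm:thickstableThurston} is literally ``combine Lemma~\ref{lem:drillnorm} and Theorem~\ref{thm:chainnormineqs}''; your write-up is an explicit unpacking of that combination.
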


    This estimate allows one to make use of Theorem \ref{thm:laminationsintubes} with only \emph{topological} information about the complexity of the compatible cohomology classes in $H^1(W)$.
    Then, Dehn filling length estimates of Futer-Purcell-Schleimer, refining an asymptotic result of Neumann and Zagier, can be used to obtain bounds on the stable norm in the thin part. Together, these give tools to certify that the cohomology stretch lamination is a finite union of closed curves. For certain slopes, this leads to Theorem \ref{thm:littleoh} below.

    \begin{figure}[h]
        \centering
        \includegraphics[scale=.3]{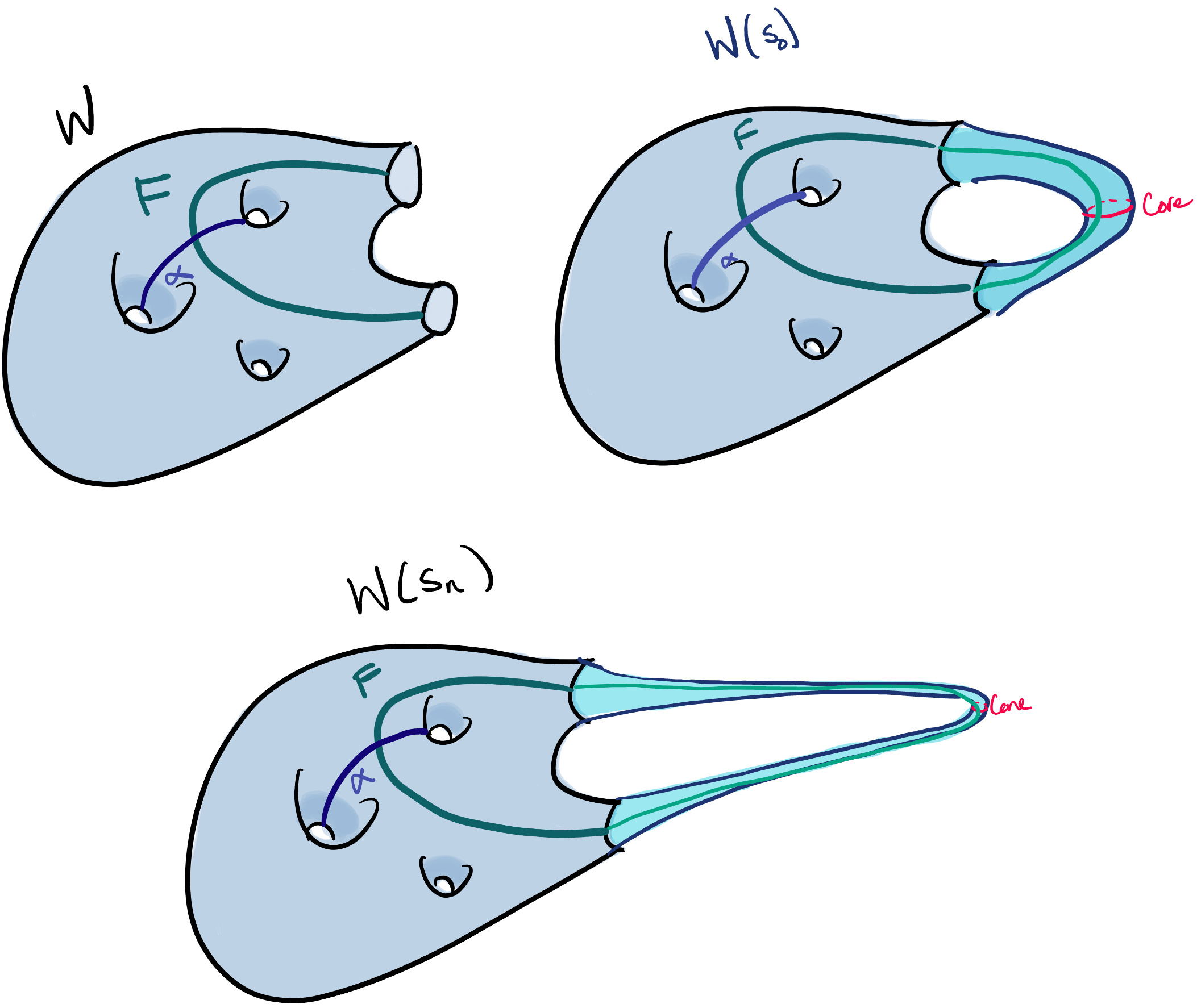}\caption{A 2-dimensional cartoon illustrating how the Lipschitz constant blows up during the surgery while the Lipschitz constant in the thick part stays bounded. The relative cycle $F$ in the initial manifold is Poincaré dual to a cohomology class $\rho$. It extends to a cycle after a cylinder is attached. Pinching the core curve $c$ of the cylinder causes the Lipschitz constant to blow up.  One should compare the ratio $\rho(\a)/\len(\a)$ for the curve in the thick part intersecting $F$ once to the ratio $\rho(c)/\len(c)$ for the core of the tube. Because for any representative form $\omega$, we have $||\omega||_\infty \geq 1/\len(\g)\int_\g\omega$, this forces the Lipschitz constant to be large along $c$.}\label{fig:pinchcurves}
    \end{figure}

    \begin{figure}[h]
        \centering
        \includegraphics[scale=.3]{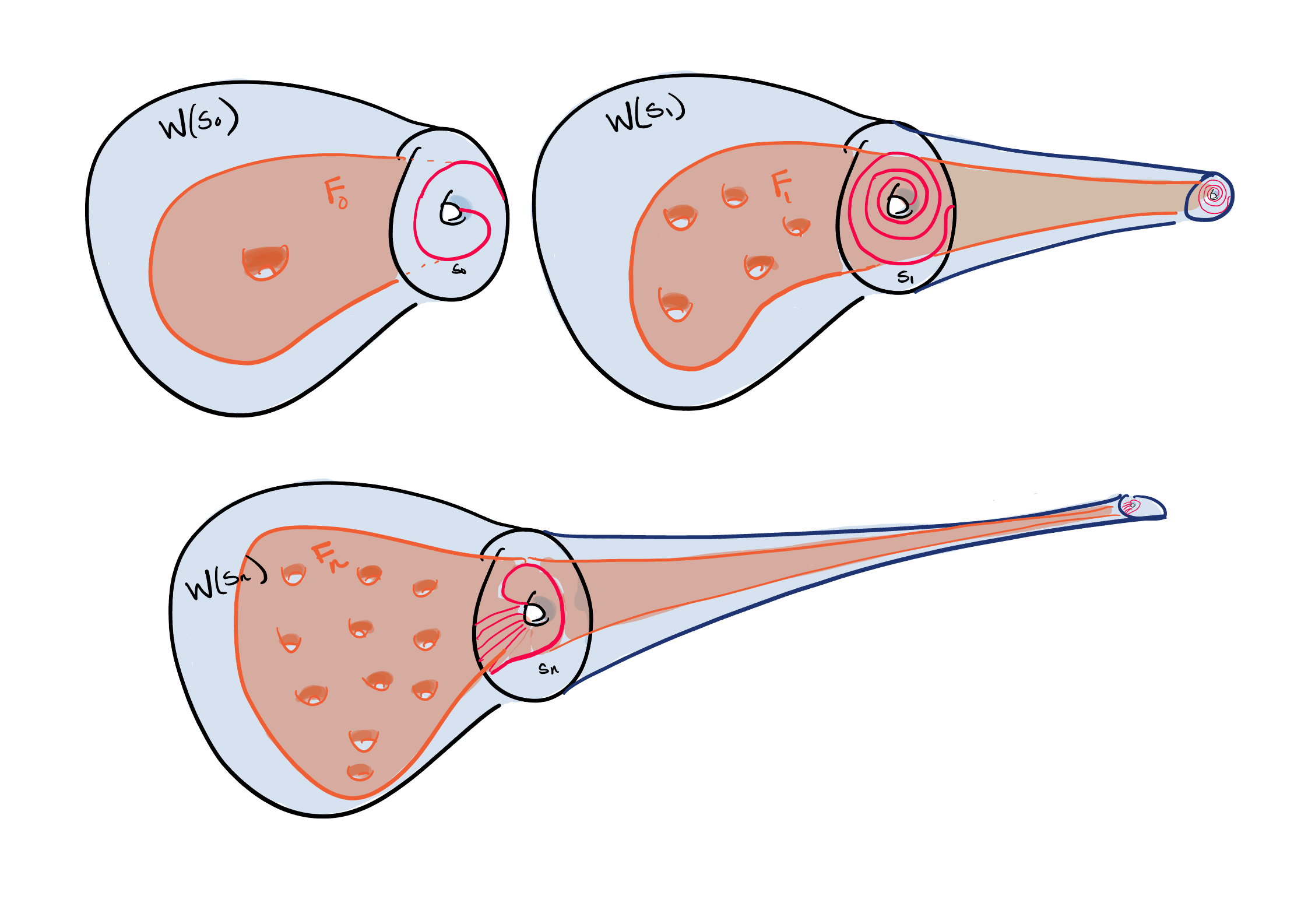}\caption{The idea in 3-dimensions is to look at a sequence of cohomology classes defining Dehn fillings. In the Poincaré dual picture, the surface dual to the class is a surface with boundary; the boundary defines the filling slope. This mimics the 2-dimensional cartoon, except now the cohomology class changes as one "pinches" the core curve. The key is that the Lipschitz constant in the thick part, whose geometry is stable, can be controlled by the Thurston norm. For sequences of classes where the growth of the Thurston norm is slower than the growth of the Lipschitz constant in the thin part, one can (eventually) show that the thin part determines the stretch lamination.}\label{fig:dehnfillingfamily}
    \end{figure}

    For simplicity, we now restrict to a convenient type of surgery class.
    A class $\rho\in H^1(W)$ is a $0$-surgery class if it is Poincaré dual to a surface that intersects each boundary component of $W$ exactly once in an essential closed curve. The boundary curves of this dual surface define a Dehn filling.
    Applying Theorem \ref{thm:laminationsintubes} in $0$-surgery families, we can use the norm comparison from Theorem \ref{thm:thickstableThurston} to prove the following:

    \begin{restatable}{thm}{littleoh}\label{thm:littleoh} Let $W$ be a cusped hyperbolic 3-manifold. Let $\rho_i^W$ be a sequence of ~$0$-surgery classes in $H^1(W)$  with boundary slopes $\bs_i$ whose total normalized lengths are going to infinity.  Let $\rho_i$ be the extension of $\rho_i^W$ to the Dehn filled manifold $M_i = W(\bs_i)$.  If the Thurston norms of the classes $\rho_i^W$ grow subquadratically in the total normalized length of the boundary slopes: $$||\rho_i^W||_{Th} = o\left(\nlen(\bs_i)^2\right),$$ then there is an $N>0$ such that for all $i>N$, the cohomology stretch lamination $\Lambda(\rho_i)\subset M_i$ is a union of Dehn filling core curves.
    \end{restatable}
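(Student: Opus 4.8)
The plan is to assemble Theorems~\ref{thm:laminationsintubes} and~\ref{thm:thickstableThurston} with the asymptotic core-length estimates of Neumann--Zagier (in the effective form due to Futer--Purcell--Schleimer), the whole argument hinging on the elementary observation that for a $0$-surgery class each Dehn filling core pairs to $\pm 1$ with the extended class. Concretely, I would first fix the Margulis constant $\mu$ produced by Theorem~\ref{thm:laminationsintubes}; it depends only on $W$ and meets the hypothesis $\mu<\min\{\sys(W)/4,\log(3)/1.2\}$ of Theorem~\ref{thm:thickstableThurston}, so that theorem applies with this $\mu$ and yields a constant $C>0$ depending only on $W$. Let $L_0$ be the larger of the two constants called $L$ in those theorems. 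Since $\nlen(\bs_i)\to\infty$ and each $\rho_i^W$ is a nontrivial class compatible with the complete slope $\bs_i$ cut out by the boundary of its dual surface, the hypotheses of both theorems hold for all large $i$. The goal is then reduced to showing that the alternative $\norm{\rho_i}_0\le 3\norm{\rho_i}_\mu$ in Theorem~\ref{thm:laminationsintubes} must fail for large $i$; this forces $\Lambda(\rho_i)$ to be a union of filling cores.

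For the upper bound, Theorem~\ref{thm:thickstableThurston} together with the subquadratic hypothesis gives at once $\norm{\rho_i}_\mu\le C\norm{\rho_i^W}_{Th}=o(\nlen(\bs_i)^2)$. The substance is the matching quadratic \emph{lower} bound on the comass $\norm{\rho_i}_0$. Here I would let $c_{i,1},\dots,c_{i,k}$ be the core geodesics of the solid tori glued in to form $M_i=W(\bs_i)$ (for $i$ large these realize the minimal length in their free homotopy classes). Taking a surface $F_i\subset W$ dual to $\rho_i^W$ that meets each cusp once and capping off its boundary curves with the meridian disks of the filling solid tori produces a closed surface $\widehat F_i\subset M_i$ Poincar\'e dual to $\rho_i$; since $\widehat F_i$ meets each core $c_{i,j}$ transversely in exactly one point, $\langle\rho_i,[c_{i,j}]\rangle=\pm 1$, hence $\norm{\rho_i}_0\ge 1/\len(c_{i,j})$ for every $j$. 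By the definition of total normalized length, each component satisfies $\nlen(\bs_{i,j})\gtrsim\nlen(\bs_i)$, and the effective Neumann--Zagier estimate bounds the corresponding core length above by $O\big(\nlen(\bs_{i,j})^{-2}\big)$ once $\nlen(\bs_{i,j})$ is large; combining these, $\norm{\rho_i}_0\ge c'\,\nlen(\bs_i)^2$ for some constant $c'>0$ and all large $i$.

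Comparing the two estimates, $3\norm{\rho_i}_\mu=o(\nlen(\bs_i)^2)$ is eventually smaller than $c'\nlen(\bs_i)^2\le\norm{\rho_i}_0$, so $\norm{\rho_i}_0\le 3\norm{\rho_i}_\mu$ fails for all $i$ beyond some $N$; Theorem~\ref{thm:laminationsintubes} then gives that $\Lambda(\rho_i)$ is a union of Dehn filling core curves for $i>N$, which is the assertion.

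I do not expect a genuinely hard step: the theorem is an assembly of results already established. The two places that demand care are (i) the duality computation $\langle\rho_i,[c_{i,j}]\rangle=\pm 1$ --- this is precisely where the $0$-surgery hypothesis enters (dual surface meeting each boundary component once), and it is the mechanism that makes the comass blow up like $\nlen(\bs_i)^2$; and (ii) quoting the Neumann--Zagier/Futer--Purcell--Schleimer core-length bound with the correct convention for total normalized length, so that the lower bound $\norm{\rho_i}_0\gtrsim\nlen(\bs_i)^2$ carries a \emph{positive} constant that does not decay along the sequence. Once (i) and (ii) are in place, the subquadratic growth of the Thurston norm does the rest.
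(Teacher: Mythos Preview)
Your proposal is correct and follows essentially the same route as the paper: fix the $\mu$ and $L$ from Theorem~\ref{thm:laminationsintubes}, use Theorem~\ref{thm:thickstableThurston} to bound $\norm{\rho_i}_\mu$ by $C\norm{\rho_i^W}_{Th}=o(\nlen(\bs_i)^2)$, and use the $0$-surgery pairing with the cores together with quantitative Neumann--Zagier to get a quadratic lower bound on $\norm{\rho_i}_0$, thereby ruling out the alternative $\norm{\rho_i}_0\le 3\norm{\rho_i}_\mu$. The paper packages your lower-bound computation as Lemma~\ref{lem:nzbound} (using the total core link rather than individual cores, which avoids the detour through component slopes $\bs_{i,j}$), but the content is the same.
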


    Note that we have replaced a geometric condition about the class in the filled manifold with a condition on the Thurston norm and slope length in the cusped manifold, which are much more accessible. A specific case where Theorem \ref{thm:littleoh} applies is given in Theorem \ref{thm:fiberedexamples} below.

    The idea is illustrated in Figures \ref{fig:pinchcurves} and \ref{fig:dehnfillingfamily}. Essentially, for these $0$-surgery Dehn fillings, the filling core curves force the stable norm to grow quadratically in the filling slope length. Using our norm comparison, the thick stable norm is controlled by the Thurston norm, which we assume is growing slower than the squared slope lengths. Thus, eventually the ordinary stable norm is much larger than the thick stable norm, so Theorem \ref{thm:laminationsintubes} identifies the stretch locus as a union of Dehn filling curves.

    This condition on the Thurston norm is not too strong, and indeed this theorem applies to a family of hyperbolic 3-manifolds constructed by Brock and Dunfield in \cite{BD}. This then gives infinitely many closed \emph{fibered} hyperbolic 3-manifolds $M_n$ along with fibered cohomology classes $\rho_n\in H^1(M_n)$ for which the cohomology stretch laminations $\Lambda(\rho_n)$ are a finite union of simple closed geodesics. 

    \begin{restatable}{thm}{fiberedexamples}\label{thm:fiberedexamples}
        There exists a 2-cusped fibered hyperbolic 3-manifold $W$ and infinitely many $0$-surgery classes $\rho_n^W\in H^1(W)$ satisfying the conditions of Theorem \ref{thm:littleoh}. The $0$-surgery classes can all be taken to be fibrations so that the filled manifolds also fiber and such that the resulting cohomology stretch laminations are the union of the two corresponding Dehn filling core curves. 
    \end{restatable}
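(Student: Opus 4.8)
The plan is to produce $W$ and the classes $\rho_n^W$ explicitly from Brock--Dunfield's construction and then feed the family into Theorem~\ref{thm:littleoh}. Concretely, I would take $W$ to be a two-cusped fibered hyperbolic $3$-manifold from \cite{BD} that carries an isometry $\tau$ interchanging its two cusps $T_1,T_2$, and I would fix a fibered face $F$ of the Thurston norm ball of $W$ together with classes $\rho_0^W,\alpha$ in the open cone over $F$, chosen so that $\rho_n^W:=\rho_0^W+n\alpha$ is compatible with $\tau$ (so that $\tau$ preserves the multislope it induces), restricts to a primitive class in $H^1(T_j;\Z)$ for each $j$ and all $n$ in an infinite progression, and hence is a $0$-surgery class; being in the open fibered cone, each $\rho_n^W$ is moreover represented by a fibration $W\to\R/\Z$ (Thurston). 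Write $\bs_n$ for the induced boundary multislope, $M_n:=W(\bs_n)$, and $\rho_n\in H^1(M_n)$ for the extension.

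Next I would check the two hypotheses of Theorem~\ref{thm:littleoh}. Since the Thurston norm is linear on the open cone over $F$, $\|\rho_n^W\|_{Th}=\|\rho_0^W\|_{Th}+n\|\alpha\|_{Th}$ grows only linearly in $n$. On the other hand, if $W$ is chosen so that the restriction $H^1(W;\R)\to H^1(T_j;\R)$ has rank $2$ for each cusp, then $\rho_0^W|_{T_j}$ and $\alpha|_{T_j}$ are independent, so the lines $\ker(\rho_n^W|_{T_j})$ — which span the components of $\bs_n$ — are pairwise distinct; thus $\{\bs_n\}$ is an infinite family of primitive slopes on the fixed Euclidean cusp cross-sections of $W$, and therefore $\nlen(\bs_n)\to\infty$. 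Hence $\|\rho_n^W\|_{Th}=O(n)=o\bigl(\nlen(\bs_n)^2\bigr)$, and Theorem~\ref{thm:littleoh} gives an $N$ such that for all $n>N$ in the progression, $\Lambda(\rho_n)$ is a union of Dehn filling core curves; in particular it is contained in the set $\{c_1^{(n)},c_2^{(n)}\}$ of the two core geodesics of the filling $M_n=W(\bs_n)$.

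To obtain the equality $\Lambda(\rho_n)=c_1^{(n)}\cup c_2^{(n)}$, I would use the symmetry: because $\tau$ preserves $\bs_n$, it extends to an isometry of $M_n$ exchanging $c_1^{(n)}$ and $c_2^{(n)}$, and it carries the extended class $\rho_n$ to $\pm\rho_n$, so $\Lambda(\rho_n)$ is $\tau$-invariant. Being nonempty (as $\rho_n\neq0$), $\tau$-invariant, and — by the previous paragraph — contained in $\{c_1^{(n)},c_2^{(n)}\}$ with $\tau$ swapping these two geodesics, $\Lambda(\rho_n)$ must equal their union. For the fibered statement, each $\rho_n^W$ is a fibration of $W$ whose fiber has boundary exactly $\bs_n$, so Dehn filling along $\bs_n$ caps the fiber off to a closed surface and extends the monodromy (isotoped to the identity near the boundary) across the capping disks; thus $M_n$ fibers over $\R/\Z$ in the class $\rho_n$, with pseudo-Anosov monodromy since $M_n$ is hyperbolic. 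As the progression is infinite, this produces infinitely many $\rho_n^W$ with all the required properties.

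The main obstacle is the very first step: exhibiting a single $W$ that is simultaneously two-cusped, fibered, and hyperbolic, admits a cusp-exchanging isometry $\tau$, has rank-$2$ restriction maps to both cusps, and supports an infinite family of fibered $0$-surgery classes $\rho_n^W$ that are compatible with $\tau$ in the sense above — the last condition interacts delicately with how $\tau^*$ acts on $H^1(W;\R)$, and is what pins down how large a symmetry group and first Betti number one needs. Brock and Dunfield's family in \cite{BD} is the intended source of such a $W$, but extracting these structural features — the fibered face, the symmetry, the surjectivity of the restriction maps, and the arithmetic guaranteeing both primitivity and $\tau$-compatibility of $\rho_n^W$ along an infinite progression — is where the real work lies. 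By contrast, once $W$ and the family are in hand the estimates are soft: the Thurston norm grows at most linearly along any ray in a fibered cone, while the squared normalized lengths of the filling slopes grow quadratically, so the hypothesis of Theorem~\ref{thm:littleoh} holds with room to spare.
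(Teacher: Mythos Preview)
Your proposal is correct and follows essentially the same route as the paper: take $W$ from Brock--Dunfield's construction (the paper names the link $L14n21792$), use linearity of the Thurston norm on a fibered cone to get $\|\rho_n^W\|_{Th}\sim n$, compare with $\nlen(\bs_n)\sim n$ to verify the $o(\nlen^2)$ hypothesis of Theorem~\ref{thm:littleoh}, and then use the cusp-swapping involution to upgrade ``a union of cores'' to ``both cores.'' The one slip is in your second paragraph: from ``$\nlen(\bs_n)\to\infty$'' alone you cannot conclude $O(n)=o(\nlen(\bs_n)^2)$ --- you need (as you correctly say in your final paragraph, and as the paper argues) that the primitive slope has coordinates linear in $n$, so $\nlen(\bs_n)$ itself grows linearly; just move that observation into the main argument.
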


    By Proposition 3.5 and Remark 3.6 in \cite{FarreLandesbergMinsky}, this theorem completely identifies the projection to the compact base manifold of the $\omega$-limit set mod $\Z$ of quasiminimizing points in the infinite cyclic covers associated to these fibrations. As noted in Remark \ref{remark:error}, these examples contradict a previous result that implied these laminations were much larger.

    \subsection*{Notation}
    Throughout, the constant $C$ is positive and depends on a fixed cusped manifold $W$ and a choice of Margulis constant $\mu$. We denote by $\len(\g)$ the length of a curve $\g$ and for $\g$ a loop or multi-loop in $M$ we denote by $|\g|_M$ the length of the shortest loop or multi-loop in the free homotopy class of $\g$ in $M$.

    \subsection*{Overview of paper} In Section \ref{sec:dehnfill}, we discuss some geometric aspects of Dehn filling, namely Brock-Bromberg's Drilling Theorem, Neumann-Zagier's length estimates, and effective refinements of both results due to Futer-Purcell-Schleimer. We also introduce "deep" Margulis constants, which we require in sections \ref{sec:stretchtubes} and \ref{sec:mainthms}. In Section \ref{sec:zerosurgery}, we discuss cohomology classes and compatible Dehn fillings. In Section \ref{sec:laminations}, we outline basic definitions and results on laminations and best Lipschitz maps. In Section \ref{sec:stablenorm}, we discuss the stable norm and its connection to best Lipschitz mappings. In Section \ref{sec:thickstablethurstoncomparison}, we relate the thick stable norm to the Thurston norm; the main result here is Theorem \ref{thm:chainnormineqs}. In Section \ref{sec:bigtubes}, we prove some preliminary results about curves near deep Margulis tubes. In Section \ref{sec:stretchtubes}, we study how chain recurrent stretch laminations behave near deep tubes. This includes the key estimate, in Theorem \ref{thm:onlycores}, relating the stable norm to the question of identifying strech laminations. In Section \ref{sec:mainthms}, we combine these results to obtain the main theorems on Dehn fillings discussed in the introduction.

    \section*{Acknowledgements} The author thanks James Farre for suggesting the problem of identifying stretch laminations in some 3-dimensional examples and for helpful discussions. The author thanks Nathan Dunfield and James Farre for comments on an earlier draft. The author thanks Xiaolong Hans Han for many helpful conversations on related topics and thanks Yair Minsky for an interesting conversation on this topic.  This work began at the conference \emph{Groups, Geometry, and Dynamics-- Celebrating the mathematics of Ursula Hamenstädt} at Congressi Stefano Franscini and the author thanks the organizers of that event. The author is grateful to the Max-Planck-Institute für Mathematik in
    Bonn for its hospitality and financial support.

    \section{Cusps, tubes, and fillings}\label{sec:dehnfill}

    Let $W$ be a cusped hyperbolic 3-manifold. Let $\{\cC_i\}$ be the cusps of $W$. Let $\{T_i\} = \{\d\cC_i\}$ be the cusp horotorus cross-sections of maximal area. A homotopy class of simple closed curve on the torus $T_i$, or equivalently a primitive homology class $s_i\in H_1(T_i;\Z)$, is called a slope. The length $|s_i|_{T_i}$ of a slope $s_i$ on $T_i$ is the minimal Euclidean length on the flat torus $T_i$ of a curve homotopic to $s_i$. The cusp length $\len_{\cC_i}$ is defined by scaling the length by area: $$\len_{\cC_i}(s_i)=\frac{|s_i|_{T_i}}{\sqrt{\area(T_i)}};$$ this quantity is unchanged if one chooses a different horotorus cross-section.
    A \textbf{complete slope} $\bs = (s_i)$ is a vector of slopes, one for each cusp.
    The \textbf{total normalized length} $\nlen(\bs)$ of a complete slope $\bs$ is defined by $$\nlen(\bs)^{-2}:=\sum_i\len_{\cC_i}(s_i)^{-2}.$$

    Let $\d W = \cup_i T_i$ and let $\bs = (s_i)$ be a complete slope. By attaching a solid torus to each boundary component of $W$ by a map identifying a meridian of the solid torus with the curve on the boundary torus specified by the slope, one obtains from $W$ a closed 3-manifold $M = W(\bs)$ that for long slopes is hyperbolic, see \cite{Thurston} or \cite{HodgsonKerckhoff}. The core curves of the Dehn filling solid tori form a link in the filled manifold.

    In the Dehn filled manifold $M = W(\bs)$ with its hyperbolic structure, around the link of Dehn filling core geodesics is a \textbf{collection of maximal tubes.} A collection of tubes $\cT_i$ is maximal if the interiors of the tubes are disjoint and no tube can be enlarged without its interior bumping into another tube (or possibly itself). See \cite{FuterPurcellSchleimer} Definition 4.2 for a recipe to construct maximal collections of embedded tubes around geodesic links. We assume throughout that a maximal collection of tubes is constructed  around a link in this way.
    Given a union of tubes $\cup_i\cT_i$ in $M$, we write $\core(\cup_i\cT_i)$ for the link of core geodesics in $M$. A \textbf{collection of maximal cusps} are defined similarly to collections of maximal tubes.

    The reciprocal squared total normalized length of a complete slope closely approximates the total length of the core geodesics of the Dehn filling solid tori in the filled manifold equipped with its hyperbolic structure. An asymptotic version of this was first proved in \cite{NeumannZagier} and has recently been refined by \cite{FuterPurcellSchleimer}.

    \begin{thm}[Quantiative Neumann-Zagier, Corollary 6.13 \cite{FuterPurcellSchleimer}]\label{thm:qNZ}    Let $W$ be a cusped finite volume hyperbolic 3-manifold. Suppose $M = W(\bs)$ is obtained from $W$ by Dehn filling with slope $\bs$ and the total normalized length $\ell = \nlen(\bs)$ of the slope $\bs$ is greater than 7.823. Let $c$ be the link in $M$ of core curves of the filling. Then $$\frac{2\pi}{\ell^2+16.17}<|c|_M < \frac{2\pi}{\ell^2-28.78}.$$
    \end{thm}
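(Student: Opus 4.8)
Since this statement is quoted from \cite{FuterPurcellSchleimer}, let me instead sketch the line of argument I would follow to prove such an effective Neumann--Zagier estimate. The plan is to use the cone-manifold deformation philosophy of Hodgson--Kerckhoff, made quantitative.

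\textbf{Setting up the deformation.} First I would fill all cusps of $W$ simultaneously and consider the one-parameter family of hyperbolic cone structures $M_\alpha$ on the closed manifold $M=W(\bs)$ with cone angle $\alpha$ along the filling link $c$, for $\alpha\in(0,2\pi]$. At $\alpha=2\pi$ one recovers the smooth hyperbolic structure on $M$, so $L(2\pi)=|c|_M$ where $L(\alpha):=\len_\alpha(c)$; as $\alpha\to0^{+}$ the structures converge geometrically away from $c$ to the complete structure on $W$, and one checks that $\ell_\alpha\to\nlen(\bs)$, where $\ell_\alpha$ denotes the normalized length of the slope $\bs$ read off from the maximal tube $\cT_\alpha$ about $c$ in $M_\alpha$. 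By Hodgson--Kerckhoff local rigidity this family is real-analytic in $\alpha$ and extends over the whole interval \emph{provided no degeneration occurs}; the hypothesis $\ell=\nlen(\bs)>7.823$ is exactly what lets one verify this effectively, since a large normalized length forces, via an effective three-dimensional tube-packing / Margulis-type estimate, a definite lower bound on the radius of $\cT_\alpha$ for every $\alpha\le 2\pi$, keeping $M_\alpha$ nonsingular throughout.

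\textbf{The variational relation and integration.} Along the family I would differentiate $L(\alpha)$ using the Hodgson--Kerckhoff analysis of the harmonic (closed, co-closed, infinitesimally geometric) deformation cocycle, which is supported near $\cT_\alpha$, together with the Schl\"afli formula $dV/d\alpha=-\tfrac12 L(\alpha)$. This should produce a relation of the schematic form
\[
\frac{dL}{d\alpha}\;=\;\frac{1}{\ell_\alpha^{2}}\,\bigl(1+E(\alpha)\bigr),
\]
where the error $E(\alpha)$ and the drift of $\ell_\alpha$ away from $\ell$ are both bounded \emph{explicitly} in terms of the tube radius; by the previous step these bounds are uniform over $\alpha\in(0,2\pi]$ and tend to $0$ as $\ell\to\infty$. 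Integrating from $\alpha\to0^{+}$ (where $\ell_\alpha\to\ell$ and $L\to0$) to $\alpha=2\pi$ and inserting the explicit error bounds then gives $|c|_M=L(2\pi)=\tfrac{2\pi}{\ell^{2}}+(\text{explicit correction})$; rearranging and tracking the constants yields the two-sided bound with $16.17$ and $28.78$. The \emph{total} normalized length enters through the combination $\nlen(\bs)^{-2}=\sum_i\len_{\cC_i}(s_i)^{-2}$ for free: with all cone angles set equal and deformed simultaneously, the scalar in the variational relation is the $L^2$-aggregate of the per-cusp contributions, so the single-cusp computation globalizes without change.

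\textbf{Main obstacle.} The real work is not the integration but making the first two steps effective: one needs uniform, explicit lower bounds on the radius of $\cT_\alpha$ along the entire deformation (an effective Margulis/tube lemma in dimension three that does not degenerate as $\alpha$ varies), together with explicit $L^2$ and pointwise bounds on the harmonic deformation in terms of that radius, so that $E(\alpha)$ and the variation of $\ell_\alpha$ can be integrated to honest numerical constants. This effectivization of the Hodgson--Kerckhoff estimates is the technical heart of \cite{FuterPurcellSchleimer}, and it is where I would expect essentially all of the difficulty to lie.
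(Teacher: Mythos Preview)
The paper does not prove this theorem at all: it is quoted verbatim as Corollary~6.13 of \cite{FuterPurcellSchleimer} and used as a black box. You recognize this yourself in your first sentence, so there is no ``paper's proof'' to compare against.

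That said, your sketch is a faithful outline of the approach actually taken in \cite{FuterPurcellSchleimer}, which in turn builds on Hodgson--Kerckhoff cone-deformation theory. The ingredients you name---the one-parameter family of cone structures with cone angle $\alpha\in(0,2\pi]$, the Schl\"afli formula, harmonic deformation cocycles supported near the tube, effective tube-radius lower bounds preventing degeneration, and integration of a variational relation with explicit error---are exactly the ones used. Your identification of the main obstacle (making the tube-radius and harmonic-deformation estimates effective enough to produce honest numerical constants) is also accurate. The schematic variational relation you write is correct in spirit, though in practice the analysis tracks several coupled quantities (visual area, tube radius, length) rather than a single scalar ODE, and the constants $16.17$ and $28.78$ emerge from a somewhat more involved bookkeeping than your sketch suggests. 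But as a high-level summary of where the result comes from, your proposal is on target.
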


    Given a finite volume hyperbolic 3-manifold $M$, and some small number $\mu>0$ there is a pair of natural compact manifolds with boundary called the $\mu$-\textbf{thick part} (or just thick part) of $M$: $$M_{\geq \mu} = \{x\in M~:~ \inj_x\geq \mu/2\}$$ and the $\mu$-\textbf{thin part} (or just thin part)$$M_{\leq \mu} = \{x\in M~:~ \inj_x\geq \mu/2\}.$$  There is a universal constant $\mu_3>0$, called the 3-dimensional Margulis constant, such that if $\mu\leq \mu_3$ then the $\mu$-thick part of $M$ is compact with toroidal boundary and the $\mu$-thin part is a disjoint union of tubes and cusps. We call such a positive $\mu<\mu_3$ a \textbf{Margulis constant}.

It will be useful to have a Margulis constant that decomposes the tubes in a particular way. Let $M$ be closed. A Margulis constant $\mu$ is \textbf{$(D,t)$-deep} if the $\mu$-thin part $M_{\leq\mu}$ is a union of disjoint tubes such that the corresponding collection of maximal tubes $\cT_i$ have boundary $T_i=\d \cT_i$ distance at least $D$ from the torus $T^{\mu}_i= \d M_{\geq\mu}\cap \cT_i$ of injectivity radius exactly $\mu$, and such that $T_i^\mu $ is at least distance $t$ from the core of the tube. See Figure \ref{fig:deeptube} for a schematic.

\begin{figure}[h]
    \centering
    \includegraphics[scale=.35]{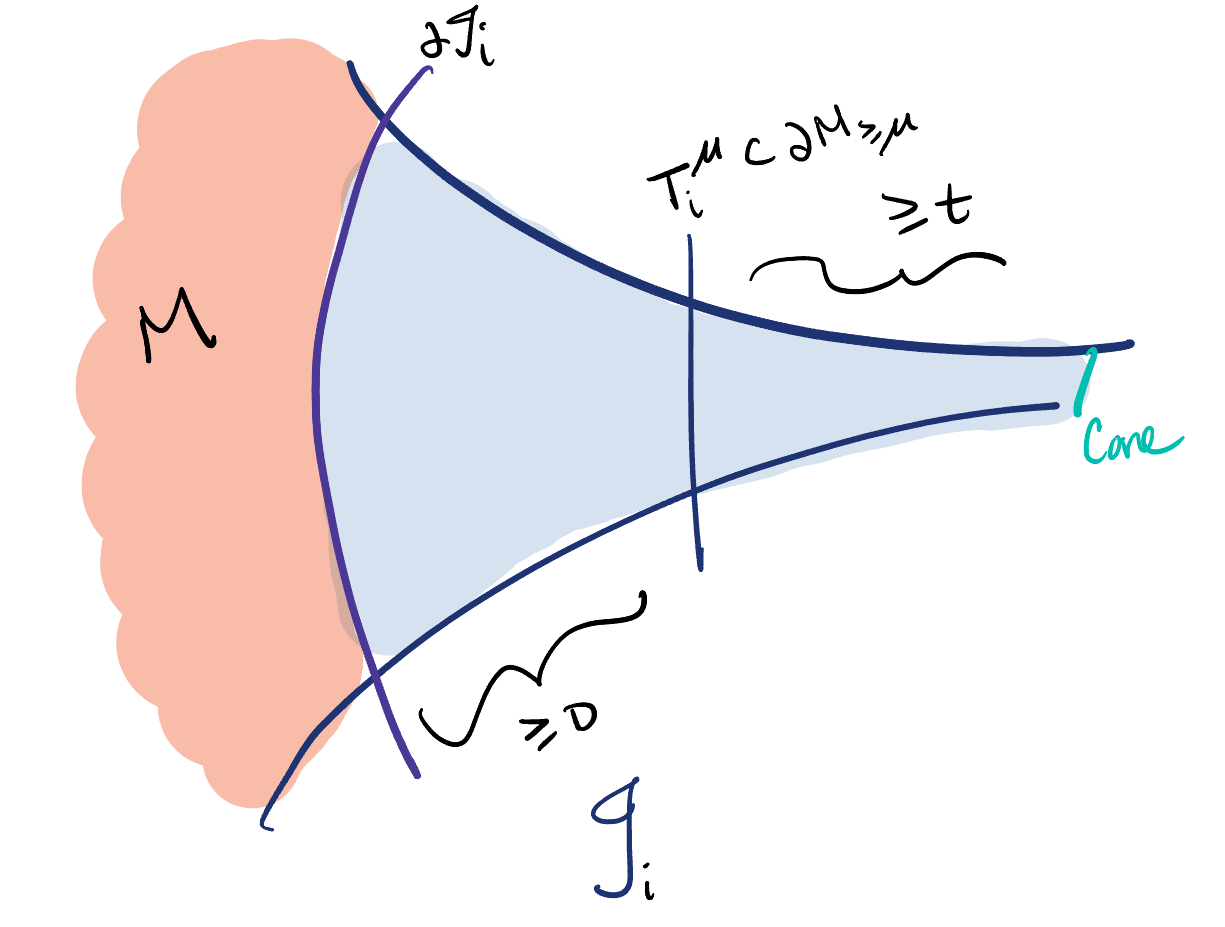}\caption{Schematic of a $(D,t)$-deep Margulis constant $\mu$.}\label{fig:deeptube}
\end{figure}

If $W$ is cusped, we say a Margulis constant $\mu$ is \textbf{$L$-persistantly $(D,t)$-deep} if for all complete slopes $\bs$ with total normalized length greater than $L$, the Margulis constant $\mu$ is $(D,t)$-deep for $W(\bs)$.

When a closed hyperbolic manifold $M$ is obtained from a cusped manifold $W$ by Dehn filling, there is a relationship between the geometry of the thick part of $M$ and the thick part of $W$. This is encoded in the Drilling Theorem of Brock and Bromberg \cite{BrockBromberg}. Recently, an effective version of this result has been obtained by Futer, Purcell, and Schleimer in \cite{FuterPurcellSchleimer}.

\begin{thm}[Effective Brock-Bromberg Drilling theorem, Theorem 1.2 \cite{FuterPurcellSchleimer}]\label{thm:drill} Fix $J>1$ and $\log3>\e>0$. Then there is an explicit $\ell_0 = \ell_0(J,\e)$ such that for every finite volume hyperbolic 3-manifold $M$ the following holds: Let $\Sigma$ be a geodesic link with total length less than $\ell_0$. Then the exterior $M-\Sigma$ admits a hyperbolic structure $W$ and there are natural $J$-biLipschitz diffeomorphisms $$M_{\geq\e}\into W_{\geq\e/1.2}\text{ and } W_{\geq\e}\into M_{\geq\e/1.2}.$$ When both maps are defined, their composition is the identity.
\end{thm}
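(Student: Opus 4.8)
The plan is to prove this exactly as Futer--Purcell--Schleimer do, by making effective the cone-manifold deformation argument of Brock--Bromberg, which itself rests on Hodgson--Kerckhoff's infinitesimal rigidity theory. First I would interpolate between the two hyperbolic structures by a one-parameter family of hyperbolic cone metrics $g_\alpha$ on $M$, singular along $\Sigma$ with cone angle $\alpha$, where $\alpha=2\pi$ recovers the smooth filled structure $M$ and $\alpha\to 0$ degenerates to the complete structure on $W = M-\Sigma$, the singular locus opening up into a rank-$2$ cusp. The content of this step is that, because $\Sigma$ is short, such a family exists, depends real-analytically on a suitable reparametrization of $\alpha$ (Hodgson--Kerckhoff use $\alpha^2$), and stays inside the regime where their deformation theory applies; this is precisely what the hypothesis ``total length less than $\ell_0$'' is chosen to guarantee.

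Second, I would represent the infinitesimal change $\dot g_\alpha$ by the unique harmonic strain field $\eta_\alpha$ predicted by Hodgson--Kerckhoff -- a divergence- and trace-free symmetric $2$-tensor, equivalently an appropriately twisted harmonic form. The crucial analytic input is a pair of a priori estimates: an $L^2$ bound for $\eta_\alpha$ on the complement of a tube $\cT$ around $\Sigma$, together with a differential inequality relating its $L^2$-norm on $\d\cT$ to the tube radius $R$. These combine to show that the energy of the deformation outside a tube decays as $R$ grows. I would then convert this into a pointwise statement by interior elliptic (mean-value) estimates for harmonic tensors on balls of radius comparable to the injectivity radius, so that at a point $x$ the pointwise rate of change $\norm{\eta_\alpha(x)}$ is controlled by $\len(\Sigma)$ and the distance $d(x,\Sigma)$, with exponential-type decay away from the singular locus.

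Third, integrating the pointwise infinitesimal deformation over $\alpha\in(0,2\pi]$ bounds the total logarithmic change of the metric at each point of the thick part by a quantity tending to $0$ as $\len(\Sigma)\to 0$, uniformly for $x$ at definite distance from $\Sigma$. Choosing $\ell_0=\ell_0(J,\e)$ so that this total change is below $\log J$ on the relevant region yields the $J$-biLipschitz identification of thick parts. The mismatch between $\e$ and $\e/1.2$ is the bookkeeping absorbing the distortion of the injectivity-radius function under the near-isometry, while $\e<\log 3$ guarantees the thin part is a standard union of tubes and cusps so that the thick parts nest as stated; naturality and the fact that the two maps compose to the identity where both are defined follow because each is, up to the controlled deformation, the identity of the underlying smooth manifold outside the tube and cusp.

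The hard part will be the effective analytic estimates of the second step: making the $L^2$-to-pointwise conversion and the decay-in-tube-radius estimate completely explicit, and -- most delicately -- certifying that the tube radius around $\Sigma$ stays above the Hodgson--Kerckhoff threshold (the $2\pi$-condition) throughout the \emph{entire} deformation, not only at the endpoints. This requires a simultaneous differential inequality bounding the change in tube radius by the same strain-field energy, turning the argument into a self-improving estimate whose bootstrap produces the explicit value of $\ell_0$.
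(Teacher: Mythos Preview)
This theorem is not proved in the paper; it is simply quoted from Futer--Purcell--Schleimer \cite{FuterPurcellSchleimer} and used as a black box. So there is no proof in the paper to compare your proposal against. Your outline does accurately describe the strategy of the cited reference (effective cone-deformation via Hodgson--Kerckhoff strain-field estimates, integrated over the cone-angle parameter), so in that sense it is correct, but for the purposes of this paper no proof is required.
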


As an application of the drilling theorem, we get that large Dehn fillings admit $L$-persistently $(D,t)$-deep Margulis constants for large $L = L(D,t)$ and any $D,t>0$.

\begin{prop}\label{prop:deepmargulis}  Let $W$ be a cusped hyperbolic 3-manifold. Fix constants $D>0, t>0$.
    Then there are constants $\mu$ and $L$ depending on $W$, $D$, and $t$ such that $\mu$ is $L$-persistantly $(D,t)$-deep. The Margulis constant can be taken to be arbitrarily small.
\end{prop}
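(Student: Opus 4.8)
The plan is to apply the Effective Drilling Theorem (Theorem~\ref{thm:drill}) together with Quantitative Neumann--Zagier (Theorem~\ref{thm:qNZ}) to control, as $\nlen(\bs)\to\infty$, the geometry of the filled manifolds $W(\bs)$ near the filling cores relative to the fixed thick part of $W$. The target is a Margulis constant $\mu$, as small as we like, so that for all sufficiently long fillings the $\mu$-thin part is a union of tubes $\cT_i$ whose boundary tori sit distance $\geq D$ outside the surface of injectivity radius $\mu$, which in turn sits distance $\geq t$ from the core geodesic. Both required distances are ``radial'' distances in solid-torus collars, so the natural approach is to first establish the picture inside $W$ (where the geometry is fixed and $\mu$ can be chosen by hand) and then transport it across the biLipschitz maps of the Drilling Theorem, using Neumann--Zagier to ensure the total core length is below the drilling threshold $\ell_0$.

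First I would fix, once and for all, a Margulis constant $\mu_0 < \mu_3$ for $W$ small enough that the $\mu_0$-thin part of $W$ consists of embedded horoball cusp neighborhoods, and small enough (shrinking further) that within each cusp the maximal horotorus $T_i$ lies at depth at least, say, $2J D$ inside the horoball bounded by the $\mu_0/1.2$-injectivity-radius horotorus — this is possible because horoball cusp neighborhoods nest and their depth relative to the maximal cusp $\to\infty$ as the defining injectivity radius $\to 0$. I would take $\mu = \mu_0$ for the filled manifolds. Next, choose $J>1$ close to $1$ and apply Theorem~\ref{thm:drill} with $\e = \mu$: there is $\ell_0=\ell_0(J,\mu)$ so that any filling $M=W(\bs)$ whose core link $\Sigma$ has total length $<\ell_0$ satisfies $M_{\geq \mu}\into W_{\geq\mu/1.2}$ and $W_{\geq\mu}\into M_{\geq\mu/1.2}$ by $J$-biLipschitz maps that are mutually inverse where both are defined. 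By Theorem~\ref{thm:qNZ}, the total core length $|c|_M < 2\pi/(\nlen(\bs)^2 - 28.78)$, which is $<\ell_0$ once $\nlen(\bs)$ exceeds some $L_1 = L_1(\ell_0) = L_1(W,D,t)$; so set $L \geq \max\{L_1, 7.823\}$ and assume $\nlen(\bs)>L$ from now on.

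With the biLipschitz dictionary in hand, the two depth estimates follow by chasing the ``deep'' structure of $W$ through the maps. The complement $M \setminus M_{\geq\mu/1.2}$ is a union of open tubes; its image under the (inverse) biLipschitz map lands in $W\setminus W_{\geq \mu_{\ast}}$ for a comparable $\mu_{\ast}$, i.e.\ inside the fixed cusps of $W$, where by the choice of $\mu_0$ the relevant horotori are separated by definite depth $\geq 2JD$ — hence in $M$ the tori $T_i^\mu=\d M_{\geq\mu}\cap\cT_i$ and $\d\cT_i$ are separated by distance $\geq D$ after dividing by $J$; one also needs that the maximal tube $\cT_i$ in $M$ is contained in (the image of) the maximal cusp, which is where the maximality recipe of \cite{FuterPurcellSchleimer} Definition~4.2 and the fact that the core length is tiny enters. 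For the second estimate — that $T_i^\mu$ is distance $\geq t$ from the core — one argues directly inside the tube in $M$: the core geodesic is extremely short (length $<2\pi/(\nlen(\bs)^2-28.78)\to 0$), and an explicit hyperbolic tube-radius computation (the radius of the $\mu$-thin tube about a geodesic of length $\lambda$ grows like $\mathrm{arccosh}$-type expression that $\to\infty$ as $\lambda\to 0$, cf.\ the Margulis-tube formulas) shows the $\mu$-injectivity locus recedes from the core; enlarging $L$ if necessary makes this radius exceed $t$.

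I expect the main obstacle to be bookkeeping around the \emph{maximality} of the tube collection and the mismatch between ``horoball cusp neighborhood bounded by an injectivity-radius horotorus'' on the $W$ side and ``maximal embedded tube'' (constructed by the specific recipe) on the $M$ side: the biLipschitz maps only match the $\mu$- and $\mu/1.2$-thick parts, not the maximal tubes/cusps themselves, so I must argue separately that the maximal tube $\cT_i\subset M$ is trapped inside the region that maps into a fixed maximal cusp of $W$, using that its core is short and the drilling comparison controls everything outside a definite neighborhood of the core. The distance-$\geq D$ claim then needs the depth budget $2JD$ to absorb the factor $J$ and the slack between $\mu$ and $\mu/1.2$; choosing $\mu_0$ small at the start gives as much budget as needed. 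Once these containments are pinned down, both inequalities are short distance comparisons, and since $\mu=\mu_0$ was chosen at the outset to be as small as desired (and can be taken smaller without affecting the argument, only possibly enlarging $L$), the final sentence of the proposition — that $\mu$ can be taken arbitrarily small — is immediate.
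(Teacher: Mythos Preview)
Your proposal is correct and follows essentially the same approach as the paper: choose $\mu$ small enough in $W$ to create a deep annular collar in each cusp, invoke the Drilling Theorem (with Neumann--Zagier converting the core-length hypothesis into a slope-length threshold $L$) to transport that collar into the filled manifold with controlled distortion, and then use a Meyerhoff-type tube-radius estimate to guarantee the $\mu$-thin torus sits at distance $\geq t$ from the core once $L$ is large. The paper takes $J=2$ rather than $J$ close to $1$, and cites \cite{Meyerhoff} explicitly for the tube-radius growth, but these are cosmetic differences.

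One small correction to your bookkeeping: the containment you want runs the other way. You cannot have ``the maximal tube $\cT_i\subset M$ contained in the image of the maximal cusp,'' since the drilling maps are only defined on thick parts and the tube $\cT_i$ contains the core geodesic. What the paper does (and what your argument should do) is show that the image in $M$ of a fixed annular collar $\cC\cap W_{\geq 2\mu}\cap W_{\leq 2\mu'}$ lands inside $W(\bs)_{\leq\mu_3}$, hence inside the maximal tube $\cT_i$; this forces $\partial\cT_i$ to lie on the far side of that collar from $T_i^\mu$, giving the required distance $\geq D$ after dividing the collar depth by the biLipschitz constant. You already anticipated this as the delicate step, so this is just a matter of orienting the containment correctly.
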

\begin{proof}
    Fix a collection of maximal cusps for all the cusps in $W$.
    Let $2\mu<\mu'<\mu_3/2$ be a pair Margulis constant such that in $W$, for each cusp $\cC$, the distance from the boundary of $\cC_{\geq2\mu'}$ to the boundary of $\cC_{\leq2\mu}$ is greater than $2D+1$. One can choose $\mu$ to be arbitrarily small. 

    Take $J=2$ and $\e=2\mu$ in the Drilling Theorem \ref{thm:drill}. By quantitative Neumann-Zagier (Theorem \ref{thm:qNZ}), the short core curve condition can be replaced with a long filling slope condition. Thus there is an $L>0$ such that for filling slopes $\bs$ with $\nlen(\bs)>L$, there is a $2$-biLipschitz map $ W_{\geq2\mu} \into W(\bs)_{\geq2\mu/1.2}\subset W(\bs)_{\geq\mu}$.

    For each cusp $\cC$, the image of $\cC\cap W_{\geq\mu}\cap W_{\leq2\mu'}$ under this embedding has distance strictly greater than $D$ between boundary components and is contained in $W(\bs)_{\leq\mu_3}$ so is contained in the maximal tube $\cT$ in the filled manifold corresponding to the cusp $\cC$.

    Therefore, $\mu$ is a Margulis constant that for each maximal tube $\cT$ around a Dehn filling core curve, the thick part of the tube $\cT_{\geq\mu}$ has distance at least $D$ between endpoints, or if the core is contained in the thick part, then the radius of the tube is greater than $D$. The $\mu$-thin part of the tube can be made arbitrarily large as the normalized length of the filling slope grows by the tube radius estimate in Corollary 2 in \cite{Meyerhoff}, which applies due to the slope length and core curve length estimate in Theorem \ref{thm:qNZ}.
    Thus after increasing $L$, for all slopes with total normalized length larger than $L$, the Margulis constant $\mu$ is $(D,t)$-deep.
\end{proof}

We will also need a uniform bound on the diameters of the boundaries of tubes in hyperbolic Dehn fillings. This is contained in a result of Hodgson and Kerckhoff.

\begin{lem}\label{lem:diambound} Let $W$ be a cusped hyperbolic 3-manifold. Then there is a constant $D$ depending on $W$ such that for any complete slope $\bs$ such that $W(\bs)$ is hyperbolic, each maximal tube around a core curve of the Dehn filling has boundary with diameter less than $D$.

\end{lem}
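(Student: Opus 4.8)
The plan is to realize the boundary torus $\d\cT_i$ of a maximal tube $\cT_i$ as a Euclidean torus and then bound its diameter by controlling its area from above and its systole from below. Recall that the equidistant torus of radius $r$ around a geodesic of length $\ell$ whose holonomy rotates by an angle $\theta$ is flat: it carries a meridian curve of length $2\pi\sinh r$, a curve freely homotopic to the core of length between $\ell$ and $\sqrt{\ell^2\cosh^2 r+\pi^2\sinh^2 r}$, and total area $2\pi\ell\sinh r\cosh r$. If a flat torus has area at most $A$ and systole at least $\sigma$, then taking a Minkowski-reduced basis $\{u_1,u_2\}$ with $|u_1|\le|u_2|$ and with the angle between them in $[\pi/3,2\pi/3]$, one has $\sigma\le|u_1|\le|u_2|\le 2A/(\sqrt3\,|u_1|)\le 2A/(\sqrt3\,\sigma)$, so its diameter is at most $\tfrac12(|u_1|+|u_2|)\le 2A/(\sqrt3\,\sigma)$. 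It therefore suffices to bound $\area(\d\cT_i)$ above and $\sys(\d\cT_i)$ below by constants depending only on $W$, uniformly over all hyperbolic fillings $W(\bs)$ and all core curves $c_i=\core(\cT_i)$.

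For fillings with $\nlen(\bs)$ large this is exactly what Hodgson--Kerckhoff's universal bounds for hyperbolic Dehn surgery provide. By Quantitative Neumann--Zagier (Theorem~\ref{thm:qNZ}), once $\nlen(\bs)$ exceeds a constant $L_1=L_1(W)$ every core curve $c_i$ has length below the universal threshold where the Hodgson--Kerckhoff estimates apply; those estimates bound both the maximal tube radius $r$ and the rotational part $\theta$ of the core holonomy (in particular $\theta$ cannot be too close to $0$ relative to the tube radius), which confines the Euclidean structure of $\d\cT_i$ to a compact family, giving the desired area and systole bounds. Equivalently, one can run a geometric-convergence argument: as $\nlen(\bs)\to\infty$ the manifolds $W(\bs)$ converge geometrically to $W$ with the core curves taking the place of the cusps, so the maximal tube boundaries converge to the maximal cusp cross-sections of $W$ --- which have definite area and systole --- and a compactness argument makes this uniform over all $\bs$ with $\nlen(\bs)>L_1$.

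It remains to handle the slopes with $\nlen(\bs)\le L_1$. Fix a core curve $c_i$. If its own slope $s_i$ has large normalized length on $\cC_i$, then $c_i$ is again short and the preceding paragraph applies directly. Otherwise $s_i$ lies in the finite set of slopes of bounded normalized length on $\cC_i$; drilling out the remaining short core curves and applying the Effective Drilling Theorem (Theorem~\ref{thm:drill}) with $J=2$ presents a neighbourhood of $c_i$ in $W(\bs)$ as a $2$-biLipschitz copy of a neighbourhood of the corresponding geodesic in one of finitely many fixed hyperbolic $3$-manifolds --- namely the fillings of $W$ along the finitely many configurations of bounded slopes --- the finitely many residual fillings to which the Drilling Theorem does not apply being bounded one at a time. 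Hence the maximal tube radius of $c_i$ in $W(\bs)$ is bounded in terms of that fixed manifold, and as the equidistant tori of bounded radius around a fixed geodesic in a fixed manifold form a compact family, $\diam(\d\cT_i)$ is at most $2$ times a constant drawn from that finite list. Taking $D$ to be the maximum of the finitely many bounds produced in the two cases completes the proof.

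I expect the crux to be the first case: it is not enough that the maximal tube is very fat, so that $\d\cT_i$ has bounded area --- a flat torus of bounded area can still have arbitrarily large diameter when it is long and thin --- and what is really needed is the quantitative control of Hodgson--Kerckhoff (equivalently, the geometric convergence of the filled manifolds to $W$ near the cusps) to keep the conformal type of $\d\cT_i$, not just its area, in a compact set. This is exactly the point at which the lemma's attribution to Hodgson and Kerckhoff enters. The multi-cusp bookkeeping in the second case is routine once one tracks which core curves are short enough to drill.
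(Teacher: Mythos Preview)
Your overall framework --- bound the area of the flat boundary torus above and its systole below, then deduce a diameter bound --- is exactly the skeleton the paper uses. But the way you obtain those two bounds differs from the paper, and one of your steps is misstated.

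First, the misstatement: you write that the Hodgson--Kerckhoff estimates ``bound both the maximal tube radius $r$ and the rotational part $\theta$.'' The maximal tube radius is \emph{not} bounded above; it tends to infinity as the core becomes short. What is controlled is the intrinsic Euclidean structure of $\d\cT_i$, not $r$ itself. Your alternative phrasing via geometric convergence to the maximal cusp cross-sections of $W$ is the correct way to say this, and it does yield the desired compactness of the family of boundary tori for long slopes.

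Second, the comparison. The paper's argument is considerably shorter and avoids your case analysis entirely. For the systole bound it invokes the injectivity-radius estimate on maximal tube boundaries (Theorem 4.26 of \cite{FuterPurcellSchleimer}, which is what the attribution to Hodgson--Kerckhoff is really pointing at): this gives a uniform positive lower bound once the core is short enough, and the finitely many remaining fillings are handled directly. For the area bound the paper does \emph{not} appeal to Hodgson--Kerckhoff or geometric convergence at all; instead it observes that an embedded tubular neighbourhood of $\d\cT_i$ of thickness comparable to the injectivity radius has volume at most $\vol(W(\bs))<\vol(W)$, so the area of $\d\cT_i$ is uniformly bounded. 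This volume trick is the main simplification you are missing: it gives the area bound for free, uniformly over all hyperbolic fillings, without any drilling or per-cusp bookkeeping. Your route through the Drilling Theorem for the short-slope case is plausible but carries more overhead (one must check that the biLipschitz map on the thick part controls the maximal-tube boundary of $c_i$, which can extend beyond the domain of that map), and it is unnecessary once the volume argument is in hand.
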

\begin{proof}
    This is essentially contained in the proof of Theorem 3.5 in \cite{HodgsonKerckhoff}.
    By Theorem 4.26 in \cite{FuterPurcellSchleimer}, in Dehn fillings of $W$ there is a lower bound on the injectivity radius at points on the boundary of maximal tubes which increases as the tube radius grows and is for sufficent radius nonnegative. Because the tube radii grow as the core curves becomes shorter (Corollary 2 \cite{Meyerhoff}), there is an $L$ such that for all slopes $\bs$ with total normalized length greater than $L$, this injectivity radius lower bound is positive and holds for the boundary of maximal tubes in the Dehn filled manifolds $W(\bs)$. As this only excludes finitely many slopes, there is a uniform lower bound for all hyperbolic fillings.
    By the injectivity radius lower bound, if the diameters of the tori diverged, their areas would diverge as well.
    If the area of the maximal tube boundary tori diverged, again using the injectivity radius bound, one could find an embedded tubular neighborhood around each such torus in the sequence of Dehn filled manifolds such that the volumes of the tubular neighborhoods diverged. This would make the volume of the Dehn filled manifolds diverge, but this would contradict that the volumes of the Dehn fillings are all bounded above by the volume of $W$.
        \end{proof}

    \section{Surgery classes and 0-surgery slopes}\label{sec:zerosurgery}
    Let $\rho\in H^1(W)$ be a cohomology class. In addition to a homotopy class of maps to $\R/\Z$, the class $\rho$ defines a Poincaré-Lefschetz dual homology class $a \in H_2(W,\d W)$. A representative of $a$ is given by the preimage of a regular value of a $C^1$ map $W\to \R/\Z$ in the relevant homotopy class.

    We want to know which boundary slopes $\bs$ are compatible with $\rho$; that is, to which Dehn fillings the class extends. Let $a$ be the Poincaré-Lefschetz dual class to $\rho$ in $H_2(W,\d W)$ and $F$ a properly embedded incompressible surface representing $a$ with each component of $F$ homologically essential (relative the boundary) and each boundary component of $F$ an essential closed curve. Then, each boundary component of $F$ puts a constraint on possible compatible slopes $\bs$. A complete slope $\bs$ is \textbf{compatible} with $\rho$ if for each such surface $F$ and boundary component $T$ of $W$, $F\cap T$ is a union of curves parallel the curve specified by $\bs$. After Dehn filling with a compatible slope, the surface $F$ has boundary components that can capped off by meridian disks; the result is a closed surface $F_\bs$. The \textbf{extended class} $\rho_\bs$ is the Poincaré dual of this capped off surface $F_\bs$ in $W(\bs)$. In this case, we say the class $\rho$ is a \textbf{surgery class} and that it is \textbf{compatible} with the total slope $\bs$.

    We say the class $\rho$ is a \textbf{$0$-surgery class} defining a slope $\bs = \d a \in H_1(\d W)$ if there is a properly embedded incompressible surface $F$ representing the Poincaré dual class $a$ with boundary intersecting \emph{every} boundary component of $W$ exactly once in an essential  simple closed curve. The slope is then the vector of homology classes on the boundary of these boundary curves; we call this the \textbf{boundary slope} of the $0$-surgery class. As above, the class, $\rho$ extends to a class in $H^1(W(\bs))$ with the extension of $\rho$, denoted $\rho_\bs$, being obtained from the capped off surface, denoted $F_\bs$, by taking the Poincaré dual.
    Note that the geometric intersection of $F_\bs$ with each Dehn filling core curve is exactly 1.

\section{Geodesic laminations and best Lipschitz maps}\label{sec:laminations}

        A \textbf{geodesic lamination} with $1$-dimensional leaves in a Riemannian manifold $M$ is a closed subset of $M$ that is foliated by geodesics with a continuous local product structure in the form of charts $\varphi_i:U_i\to \R\times \R^2$ covering a neighborhood of the lamination such that in these charts the lamination is of the form $\R\times B$ for $B\subset\R^2$ and the transition functions $\varphi_{ij}$ are of the form $\varphi_{ij}(t,x,y)= (f_{ij}(t,x,y),g_{ij}(x,y), h_{ij}(x,y))$. We refer to these simply as geodesic laminations and always assume the leaves are 1-dimensional.
        A geodesic lamination $\Lambda$ is \textbf{recurrent} if for all $\e>0$ and any point $p$ contained in a leaf $\lambda$ of $\Lambda$, there is a unit speed loop $\g$ in $M$ through
        $p$ such that at every point $x\in\g$, the unit length parametrized segment of $\g$ around $x$ is $\e$-close to a unit length path in $\lambda$ in the $C^1$ sense.

        This essentially means one can return arbitrarily close to $p$ arbitrarily often by traveling (in one direction) along the leaf $\lambda$.

        A geodesic lamination $\Lambda$ is \textbf{chain recurrent} if for every $\e>0$ and every point $ p$ in $\Lambda$, there is a unit speed loop $\g$ in $M$ through $p$ such that for each $x$ in $\g$, the unit length parametrized segment of $\g$ around $x$ is $\e$-close to a unit length path in $\Lambda$ in the $C^1$ sense.

        Essentially, this means one can return arbitrarily close to $p$ arbitrarily often by traveling (in one direction, which may be only locally defined via a small oriented transversal) along leaves of $\Lambda$, but in contrast to the recurrent case one can occasionally jump between nearby leaves.

        Our main object of study are chain recurrent geodesic laminations associated to cohomology classes. 

        A map $f$ in a homotopy class of maps $M\to \R/\Z$ is called \textbf{best Lipschitz} if it has minimal possible Lipschitz constant in its homotopy class. Note that best Lipschitz maps are generally far from unique as the map can be deformed near points where the Lipschitz constant is small. Because the map $f$ may not be $C^1$, the Lipschitz constant is defined as follows: 

        For $E$ a subset of a Riemannian manifold $M$ and $u : M \to \R/\Z$, the Lipschitz constant on $E$ is defined to be
        $$L_u(E) := \inf\{L \in\R~:~ d_{\R/\Z}(u(x)-u(y)) \leq Ld_M(x, y) \text{ for all } x,y\in E\}.$$
        The Lipschitz constant of $u$ is $$L_u := L_u(M).$$
        We call the infimal Lipschitz constant over all maps $M\to \R/\Z$ in the homotopy class defined by a cohomology class $\rho\in H^1(M)$ the \textbf{cohomology stretch} of $\rho$ and write $L(\rho)$ for its value: $$L(\rho)=\inf \{L_u~:~ u \text{ in homotopy class defined by }\rho\}.$$

        The local Lipschitz constant of $u:M\to\R/\Z$ is a function $L_u:M\to\R$ given by $$L_u(x)=\lim\limits_{r\to0}L_u(B_r(x)).$$
        The map $L_u:M\to\R$ is upper semicontinuous (Lemma 2.9 \cite{GueritaudKassel}).

        Associated to any best Lipschitz map $f:M\to\R/\Z$ is a \textbf{maximal stretch set} $$\stretch(f) = \{x\in M~:~L_f(x)=L_f\}$$ of points at which the Lipschitz constant is realized. The intersection of maximal stretch sets over all best Lipschitz maps in a nontrivial homotopy class is a nonempty geodesic lamination $$\Lambda_0(\rho) = \cap_{f}\stretch(f).$$ Contained in $\Lambda_0(\rho)$ is a maximal chain recurrent sublamination $$\Lambda(\rho) \subseteq \Lambda_0(\rho)$$ which we call the \textbf{cohomology stretch lamination} of $\rho$.

        We record these facts, proven in various forms by \cite{FathiSiconolfi}, \cite{GueritaudKassel}, and \cite{DaskalopoulosUhlenbeck}, as the following two theorems.

        \begin{thm}[Theorem 1.3 \cite{GueritaudKassel}] Let $M$ be a closed hyperbolic 3-manifold and $\rho\in H^1(M)$ a nontrivial cohomology class. The intersection $\cap_{f}\stretch(f)$ over all best Lipschitz maps in the homotopy class defined by $\rho$ is a nonempty geodesic lamination $\Lambda_0(\rho)$.
        \end{thm}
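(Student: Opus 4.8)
The plan is to establish the statement in three movements: produce best Lipschitz representatives and see that their stretch sets are nonempty; run an averaging argument showing the family $\{\stretch(f)\}_f$ has the finite intersection property, hence nonempty intersection; and then extract the local product structure of a geodesic lamination on that intersection from a rigidity property of maximally stretched geodesics. For the first movement: given maps $f_n$ in the homotopy class of $\rho$ with $L_{f_n}\to L(\rho)$, normalize by post-composing with rotations of $\R/\Z$ so that the $f_n$ are uniformly Lipschitz; by Arzel\`a--Ascoli a subsequence converges uniformly to an $L(\rho)$-Lipschitz map $f$ which, being $C^0$-close to the $f_n$, lies in the homotopy class of $\rho$ and so is best Lipschitz. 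For such an $f$, $\stretch(f)\neq\emptyset$: since $M$ is a length space one has $L_f=\sup_{x\in M}L_f(x)$ (subdivide a nearly geodesic path joining two points and use the triangle inequality in $\R/\Z$), and since $x\mapsto L_f(x)$ is upper semicontinuous (Lemma 2.9 of \cite{GueritaudKassel}) and $M$ is compact, this supremum is attained.

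For the second movement, given best Lipschitz $f_1,f_2$, lift to the universal cover so the lifts $\widetilde f_i$ satisfy $\widetilde f_i(\g x)=\widetilde f_i(x)+\rho(\g)$ for $\g\in\pi_1 M$, and set $\widetilde g=\tfrac12(\widetilde f_1+\widetilde f_2)$; this descends to a map $g\colon M\to\R/\Z$ in the homotopy class of $\rho$. From $|\widetilde g(y)-\widetilde g(z)|\le\tfrac12|\widetilde f_1(y)-\widetilde f_1(z)|+\tfrac12|\widetilde f_2(y)-\widetilde f_2(z)|$ one obtains the pointwise inequality $L_g(x)\le\tfrac12 L_{f_1}(x)+\tfrac12 L_{f_2}(x)\le L(\rho)$, so $g$ is again best Lipschitz, and $L_g(x)=L(\rho)$ forces $L_{f_1}(x)=L_{f_2}(x)=L(\rho)$; hence $\stretch(g)\subseteq\stretch(f_1)\cap\stretch(f_2)$. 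Iterating, every finite subfamily of $\{\stretch(f)\}_f$ contains a nonempty stretch set of some best Lipschitz map, so this family of closed subsets of the compact manifold $M$ has the finite intersection property, and $\Lambda_0(\rho)=\bigcap_f\stretch(f)\neq\emptyset$.

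For the third movement I would establish the geodesic lamination structure, which I expect to be the main obstacle. The basic rigidity is: if $f$ is best Lipschitz and $\g$ is a complete geodesic along which a lift $\widetilde f$ is affine of slope $L(\rho)$, then no two such ``maximally stretched'' geodesics cross transversally (nor does one cross itself), since from a transverse intersection point $p$, running a distance $\delta$ along one and a distance $\delta$ along the other produces points $q_1,q_2$ with $\widetilde f(q_1)-\widetilde f(q_2)=2L(\rho)\delta$ while $d(q_1,q_2)<2\delta$ strictly, contradicting the Lipschitz bound. The work is then: (i) to show every point of $\Lambda_0(\rho)$ lies on such a complete geodesic, one that is maximally stretched by \emph{every} best Lipschitz map; and (ii) to upgrade the pointwise non-crossing into product charts. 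For (i) one combines two ingredients: a point at which maximal stretch is only a bounded-range phenomenon can be ``pushed down'' by a local modification of $f$, yielding a best Lipschitz map that omits that point from its stretch set, so points of $\Lambda_0(\rho)$ must lie on complete maximally stretched geodesics; and such geodesics are produced in the infinite cyclic cover $\widetilde M_\rho$, where the lift of a best Lipschitz map is proper and $L(\rho)$-Lipschitz, by taking geodesics realizing distances between widely separated level sets (these are asymptotically maximally stretched, using $L(\rho)=\norm{\rho}_0$) and passing to a limit after recentering, using compactness of $M$ and continuity of $f$. Since the set of complete maximally stretched geodesics is closed, $\Lambda_0(\rho)$ is then a closed union of complete geodesics with no transverse intersections, which is precisely a geodesic lamination with $1$-dimensional leaves. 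The delicate points are the perturbation argument in (i) — ruling out that $\bigcap_f\stretch(f)$ contains stray pieces that are not parts of complete leaves — together with making the averaging construction interact well with the convergence of maximally stretched geodesics of the averaged maps.
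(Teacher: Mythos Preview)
The paper does not prove this statement: it is quoted as Theorem 1.3 of \cite{GueritaudKassel} and used as a black box, so there is no proof in the paper to compare your attempt against. Your sketch is broadly in the spirit of the Gu\'eritaud--Kassel argument (Arzel\`a--Ascoli for existence, convex averaging of equivariant lifts to get the finite intersection property, and a local perturbation/rigidity step to rule out stray points and force a lamination structure), but since the present paper treats the result as imported, any detailed assessment would have to be made against \cite{GueritaudKassel} rather than this paper.
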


        \begin{thm}[Theorem 1.7 \cite{FathiSiconolfi}, Theorem 1.1 \cite{DaskalopoulosUhlenbeck}, Theorem 1.3 \cite{GueritaudKassel}]\label{thm:maps} Let $M$ be a closed hyperbolic 3-manifold and $\rho\in H^1(M)$ a nontrivial cohomology class. There exists a best Lipschitz map $f:M\to \R/\Z$ in the homotopy class determined by $\rho$ such that the set $\stretch(f)$ is a geodesic lamination. The map can be taken to be $C^1$ as in \cite{FathiSiconolfi}, with $\stretch(f)=\Lambda_0(\rho)$ as in \cite{GueritaudKassel}, or infinity harmonic as in \cite{DaskalopoulosUhlenbeck}.
        \end{thm}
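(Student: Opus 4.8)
Since this statement collects the existence and structure results of \cite{FathiSiconolfi}, \cite{GueritaudKassel}, and \cite{DaskalopoulosUhlenbeck}, the plan is to first produce a best Lipschitz representative by compactness, then establish that its maximal stretch set is a geodesic lamination, and finally indicate how each of the three refinements is obtained by selecting the representative appropriately.

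First I would produce a best Lipschitz map by Arzel\`a--Ascoli. Take a minimizing sequence $f_n$ in the homotopy class determined by $\rho$ with $L_{f_n}\to L(\rho)$; after normalizing the value at a basepoint, the $f_n$ are uniformly Lipschitz with values in the compact target $\R/\Z$, so a subsequence converges uniformly to a Lipschitz limit $f$ with $L_f\leq L(\rho)$. Since two maps into $\R/\Z$ that are uniformly within $1/2$ of one another are homotopic through the straight-line homotopy, $f$ lies in the class of $\rho$ and is therefore best Lipschitz.

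The core step is to show that $\stretch(f)$ contains a geodesic lamination. Pass to the infinite cyclic cover on which $f$ lifts to an equivariant real-valued map $\widetilde f$, normalized so that $\widetilde f$ is $L_f$-Lipschitz. At a stretch point, the sequences realizing the local Lipschitz constant select a \emph{maximally stretched direction}, and integrating these directions produces \emph{maximally stretched geodesics} $\gamma$ along which $\widetilde f(\gamma(t))=\widetilde f(\gamma(0))+L_f t$. The decisive point is uniqueness: through any point there is at most one bi-infinite maximally stretched geodesic. Indeed, if two distinct such lines shared a point $p$, then comparing $\widetilde f$ at $\gamma_1(t)$ with $t>0$ and at $\gamma_2(s)$ with $s<0$ gives $\widetilde f(\gamma_1(t))-\widetilde f(\gamma_2(s))=L_f(t+|s|)$, so the $L_f$-Lipschitz bound forces $d(\gamma_1(t),\gamma_2(s))\geq t+|s|$; but in strictly negative curvature the hinge comparison inequality makes this distance strictly less than $t+|s|$ unless the two lines coincide, a contradiction. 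This non-crossing property gives disjointness of leaves, and combined with the upper semicontinuity of the local Lipschitz constant (Lemma 2.9 of \cite{GueritaudKassel}), which yields closedness of the stretch set and continuity of the leaf field, one obtains the charted local product structure of a geodesic lamination. Bi-infinite leaves are secured on the chain recurrent part, which is why the robust invariant object is $\Lambda(\rho)\subseteq\Lambda_0(\rho)$.

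Finally I would record the three refinements, each amounting to a good choice of representative. For the equality $\stretch(f)=\Lambda_0(\rho)$ I would invoke Gueritaud--Kassel's canonical construction, designed so that the stretch set is as small as possible and thus realizes the intersection over all best Lipschitz maps. For the $C^1$ statement I would use the weak KAM / Aubry--Mather approach of Fathi--Siconolfi, producing a $C^1$ critical subsolution of the associated eikonal-type equation whose stretch locus is exactly the projected Aubry set, a geodesic lamination. For the infinity-harmonic statement I would follow Daskalopoulos--Uhlenbeck: minimize the $L^p$-energy of the differential in the homotopy class to obtain $p$-harmonic representatives and pass to the limit $p\to\infty$, where the regularity theory for infinity-harmonic functions identifies the stretch set as a geodesic lamination. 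The main obstacle throughout is the lamination structure of the stretch set, namely the non-crossing of maximally stretched geodesics and the assembly of a continuous leaf field into product charts; once uniqueness of the maximal-stretch direction is in hand, closedness and continuity follow from upper semicontinuity, and the three refinements reduce to selecting a sufficiently regular or minimal representative.
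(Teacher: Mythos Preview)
The paper does not prove this theorem; it simply records it as a citation of the three external results \cite{FathiSiconolfi}, \cite{GueritaudKassel}, \cite{DaskalopoulosUhlenbeck} and uses it as a black box (there is no proof environment following the statement). So there is nothing in the paper to compare your proposal against.

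That said, your sketch is a reasonable outline of the ideas behind the cited results. The Arzel\`a--Ascoli step and the non-crossing argument via the Lipschitz constraint forcing $d(\gamma_1(t),\gamma_2(s))\geq t+|s|$ are essentially the Gueritaud--Kassel mechanism (and in fact the hinge argument works in any uniquely geodesic space, not just negative curvature: equality in the triangle inequality already forces the two rays to concatenate to a single geodesic). Your description of the three refinements is accurate at the level of a roadmap, though each hides substantial work: the Gueritaud--Kassel construction of a representative with \emph{minimal} stretch set is an explicit averaging/interpolation argument, the Fathi--Siconolfi $C^1$ subsolution requires their full weak KAM machinery, and the Daskalopoulos--Uhlenbeck $p\to\infty$ limit needs nontrivial regularity for $p$-harmonic maps and viscosity theory for the infinity Laplacian. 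None of this is needed for the present paper, which only invokes the existence of a $C^1$ best Lipschitz map whose stretch set is a lamination (used in Propositions~\ref{prop:orientable}, \ref{prop:lengthbound} and Theorem~\ref{thm:onlycores}).
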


        We also record here the corollary that the cohomology stretch lamination is orientable.

        \begin{prop}\label{prop:orientable} Cohomology stretch laminations are orientable.
        \end{prop}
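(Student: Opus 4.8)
The plan is to orient the larger lamination $\Lambda_0(\rho)\supseteq\Lambda(\rho)$ explicitly and then restrict. First I would invoke \Cref{thm:maps} to fix a $C^1$ best Lipschitz map $f\colon M\to\R/\Z$ in the homotopy class determined by $\rho$ with $\stretch(f)=\Lambda_0(\rho)$, and set $\omega=df$, the pullback under $f$ of the standard length form on $\R/\Z$. Then $\omega$ is a continuous $1$-form on $M$, and the local Lipschitz constant of $f$ at a point $x$ is the Riemannian norm $|\omega_x|$. Hence $|\omega_x|\le L_f$ everywhere, with equality exactly on $\Lambda_0(\rho)$, and since $\rho$ is nontrivial we have $L_f=L(\rho)>0$, so $\omega$ is nowhere vanishing on $\Lambda_0(\rho)$.

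The key input is the structural fact -- Thurston's stretch picture in dimension two and its analogue for best Lipschitz circle valued maps, see \cite{GueritaudKassel}, \cite{FathiSiconolfi}, \cite{DaskalopoulosUhlenbeck} -- that along each leaf $\g$ of $\Lambda_0(\rho)$ the map $f$ increases at the maximal rate: parametrizing $\g$ by unit speed and lifting $f\circ\g$ to $\R$, one has $(f\circ\g)'\equiv \pm L_f$, so that a unit vector $v$ tangent to a leaf at $x$ satisfies $\omega_x(v)=\pm L_f$. In particular the leaves of $\Lambda_0(\rho)$ are everywhere tangent to $\nabla f$ and transverse to $\ker\omega$. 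I expect this to be the only nonformal step of the argument; if one wishes to avoid quoting it, one can note that a leaf $\g\subset\Lambda_0(\rho)=\stretch(f)$ has $|\nabla f|\equiv L_f$ along it, hence $(f\circ\g)'\le L_f$ with equality if and only if $\dot\g\parallel\nabla f$, and then use the variational characterization of the maximal stretch locus to exclude strict inequality.

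Given this, I would define the orientation: for $x\in\Lambda_0(\rho)$ let $v(x)\in S_xM$ be the unit tangent vector to the leaf of $\Lambda_0(\rho)$ through $x$ with $\omega_x(v(x))>0$; it is well defined (it pairs with $\omega_x$ to $+L_f$, while the opposite unit tangent vector pairs to $-L_f$) and represents ``the direction in which $f$ increases along the leaf.'' Continuity of $x\mapsto v(x)$ follows from continuity of $\omega$, its nonvanishing on $\Lambda_0(\rho)$, and continuity of the projectivized tangent line field of a geodesic lamination, which holds because leaves vary continuously in the $C^1$ topology via the local product structure. Equivalently: the continuous function $(x,w)\mapsto\omega_x(w)$ on the tangent lift $\widehat\Lambda_0(\rho)=\{(x,w)\in SM : w\text{ tangent to a leaf through }x\}$ takes only the values $\pm L_f$, so $\widehat\Lambda_0(\rho)$ is the disjoint union of the two clopen subsets on which it equals $L_f$ and $-L_f$, each mapping homeomorphically onto $\Lambda_0(\rho)$ -- that is, $\Lambda_0(\rho)$ is orientable. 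Finally, since $\Lambda(\rho)$ is a closed sublamination of $\Lambda_0(\rho)$, restricting $v$ to it yields a continuous orienting vector field, and the proposition follows.
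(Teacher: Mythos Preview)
Your proposal is correct and follows essentially the same approach as the paper: use the $C^1$ best Lipschitz map of Fathi--Siconolfi and orient the lamination by the continuous form $\omega=df$ (equivalently, by the gradient vector field of $f$), exactly as the paper's one-line proof does. One small quibble: \Cref{thm:maps} lists three separate constructions, so it does not literally promise a map that is simultaneously $C^1$ \emph{and} has $\stretch(f)=\Lambda_0(\rho)$; but your argument only needs $\Lambda_0(\rho)\subseteq\stretch(f)$, which holds for any best Lipschitz $f$, so this does not affect correctness.
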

        \begin{proof} We can identify $\Lambda(\rho)$ with a union of flowlines of a continuous vector field via the $C^1$ best Lipschitz map in \cite{FathiSiconolfi}.
        \end{proof}

        \section{Stretch functional and stable norms}\label{sec:stablenorm}

        Let $M$ be a finite volume hyperbolic 3-manifold possibly with cusps or boundary.
        Consider the real vector space $\cG(M)$ of weighted free homotopy classes of oriented closed curves with positive minimal length with the relations $\g^k=k\g$ and $-\g = \overline\g$, where $\overline \g$ denotes the curve $\g$ with reversed orientation. Note that when $M$ has cusps, peripheral curves are not included in $\cG(M)$.

        For a chain $$g=\sum r_i\gamma_i\in \cG(M),$$ Define $$|g|_M =\sum|r_i|\cdot|\gamma_i|_M,$$ where $|\gamma_i|_M$ is the minimal length of a curve in $M$ freely homotopic to $\gamma_i$. Let $\rho\in H^1(M)$ be a cohomology class. Then $\rho$ defines a map $\cG(M)\to\R$ by identifying elements in $\cG(M)$ with singular cycles.

        Given a Margulis constant $\mu\geq 0$, one can also consider $\cG(M_{\geq\mu})$.
        When $M$ has cusps, the peripheral curves \emph{are} included in $\cG(M_{\geq\mu})$ as there now exist length minimizing representatives. Via the inclusion $M_{\geq\mu}\into M$, the class $\rho$ pulls back to a class on $M_{\geq\mu}$, and then extends to a map $\cG(M_{\geq\mu})\to\R$.

        Let $K=K_{\rho,\mu}:\cG(M_{\geq\mu})-\{0\}\to\R$ be the function $$K(g) = \frac{\rho(g)}{|g|_{M_{\geq\mu}}}.$$ By convention, we set $K(0)=0$.
        Given $\mu\geq 0$, we define the \textbf{$\mu$-thick stable norm} $\norm{\cdot}_\mu$ on $H^1(M)$ as follows: $$||\rho||_{\mu} = \sup\limits_{g\in\cG(M_{\geq\mu})}K_{\rho,\mu}(g).$$

        For $M$ compact and $\mu=0$, this is the usual (dual) stable norm on $H^1(M)$; also called the comass (see Section 4 of \cite{Gromov} for a general reference).

        For any $f$ in the homotopy class defined by $\rho$ and any chain $\g\in\cG(M)$, we have $\rho(\g) = \int_\g df$, so we see $\norm{\rho}_0\leq L(\rho).$ Indeed, this turns out to be an equality and one can compute the stable norm as the supremum of $\rho(\sigma)/\len(\sigma)$ over simple closed curves $\sigma$.

        \begin{thm}[Theorem 5.8 \cite{DaskalopoulosUhlenbeck}] Let $M$ be a closed hyperbolic 3-manifold and $\rho\in H^1(M)$. Then the cohomology stretch and stable norm are equal: $L(\rho)=||\rho||_0$. Moreover, the stable norm is realized by a sequence of simple closed curves approximating a leaf of the stretch lamination.
        \end{thm}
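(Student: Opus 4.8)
The plan is to prove the two inequalities $\norm{\rho}_0\leq L(\rho)$ and $L(\rho)\leq\norm{\rho}_0$ separately, and then to extract the claimed minimizing sequence of simple closed curves from the geometry of the stretch lamination of a $C^1$ best Lipschitz representative. The inequality $\norm{\rho}_0\leq L(\rho)$ is the soft direction and is essentially the computation recorded just before the theorem: for any Lipschitz $u\colon M\to\R/\Z$ in the class $\rho$ and any chain $g=\sum r_i\gamma_i\in\cG(M)$, replace each $\gamma_i$ by its geodesic representative; then $u|_{\gamma_i}$ is Lipschitz with $\rho(\gamma_i)=\int_{\gamma_i}du$ and $|du(\dot\gamma_i)|\leq L_u$ almost everywhere, so $|\rho(g)|\leq L_u\,|g|_M$ and hence $K_{\rho,0}(g)\leq L_u$. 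Taking the supremum over $g$ and then the infimum over $u$ in the homotopy class gives $\norm{\rho}_0\leq L(\rho)$.

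For the reverse inequality I would build a good map on the infinite cyclic cover. Let $\pi\colon\widetilde M_\rho\to M$ be the cover with deck group $\Z=\langle z\rangle$ dual to $\rho$, and assume for simplicity $\rho$ is primitive, so that a map $M\to\R/\Z$ in the class $\rho$ is the same thing as a function $v\colon\widetilde M_\rho\to\R$ with $v(zx)=v(x)+1$. Set $c:=\norm{\rho}_0$, fix a basepoint $p$, and define \[v(x)=\inf_{k\in\Z}\bigl(c\,d_{\widetilde M_\rho}(x,z^kp)+k\bigr).\] The one delicate point is finiteness: the $\pi$-image of a geodesic segment from $p$ to $z^mp$ is a loop in $M$ of length $d_{\widetilde M_\rho}(p,z^mp)$ whose class evaluates to $m$ under $\rho$, so $c=\norm{\rho}_0\geq m/d_{\widetilde M_\rho}(p,z^mp)$, that is, $d_{\widetilde M_\rho}(p,z^mp)\geq m/c$; this bounds the $k\to-\infty$ terms from below near every $x$, while the $k\to+\infty$ terms tend to $+\infty$. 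Hence $v$ is a finite infimum of $c$-Lipschitz functions, so $c$-Lipschitz, and the substitution $k\mapsto k+1$ gives $v(zx)=v(x)+1$; thus $v$ descends to $f\colon M\to\R/\Z$, evaluating $v$ at the endpoints of a lifted loop gives $[f]=\rho$, and $L_f\leq c$. With the first paragraph this yields $L(\rho)=\norm{\rho}_0$. (For non-primitive $\rho$ one replaces $k$ by $k_0k$, where $k_0\Z=\rho(\pi_1M)$, with no other change.)

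For the moreover statement, use Theorem \ref{thm:maps} to pick a $C^1$ best Lipschitz $f$ with $\stretch(f)=\Lambda_0(\rho)$; then $x\mapsto\norm{df_x}$ is continuous, bounded by $L_f=\norm{\rho}_0$, with equality on $\Lambda_0(\rho)$, and by Proposition \ref{prop:orientable} the leaves of a minimal (hence recurrent) sublamination $\Lambda'\subseteq\Lambda_0(\rho)$ are flowlines of the continuous unit vector field dual to $df$, so $df(\dot\lambda)=\norm{\rho}_0$ along a unit-speed leaf $\lambda\subseteq\Lambda'$. Since $\lambda$ is recurrent there are unit-speed loops $\gamma_n$, one for each $\e_n\to0$, that are $\e_n$-$C^1$-close to $\lambda$ at every point; we may assume $\len(\gamma_n)\to\infty$, since otherwise a subsequence converges to a closed geodesic in $\Lambda'$, a closed leaf that already realizes $\norm{\rho}_0$. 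Continuity of $df$ then gives $\rho([\gamma_n])=\int_{\gamma_n}df\geq(\norm{\rho}_0-\delta_n)\len(\gamma_n)\geq(\norm{\rho}_0-\delta_n)\,|\gamma_n|_M$ with $\delta_n\to0$, while $K_{\rho,0}([\gamma_n])\leq\norm{\rho}_0$ by definition of the stable norm, so $K_{\rho,0}([\gamma_n])\to\norm{\rho}_0$. Finally, as $\dim M=3$, an arbitrarily $C^1$-small perturbation makes each $\gamma_n$ embedded without leaving its free homotopy class, so $\rho([\gamma_n])$ and $|\gamma_n|_M$ are unchanged and the curve stays close to $\lambda$; this is the required sequence of simple closed curves.

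The step I expect to be the real obstacle does not appear in the argument above only because it has been black-boxed: it is the $C^1$-regularity of a best Lipschitz representative together with the fact that along the stretch lamination $df$ is not merely of maximal norm but actually points along the leaves. This is the analytic heart of the theory, imported here through Theorem \ref{thm:maps} and Proposition \ref{prop:orientable}; once it is granted, the only remaining care is the uniform-in-$x$ estimate $d_{\widetilde M_\rho}(p,z^mp)\geq m/\norm{\rho}_0$ on the noncompact cover that makes the Busemann-type infimum defining $v$ finite, and this is immediate from the definition of the stable norm.
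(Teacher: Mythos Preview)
The paper does not prove this statement: it is quoted verbatim as Theorem 5.8 of \cite{DaskalopoulosUhlenbeck} and used as a black box, so there is no ``paper's own proof'' to compare against. What the paper does contain is the easy inequality $\norm{\rho}_0\leq L(\rho)$ (the line just before the theorem), and later, in the proofs of Proposition~\ref{prop:lengthbound} and Theorem~\ref{thm:onlycores}, exactly the recurrence-and-integrate argument you use for the ``moreover'' clause: take a $C^1$ best Lipschitz $f$, use chain recurrence to get loops $\gamma_k$ that $C^1$-approximate leaves, orient them by $df$, and conclude $K(\gamma_k)\to\norm{\rho}_0$.

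Your argument is correct. The Busemann-type construction $v(x)=\inf_k\bigl(c\,d(x,z^kp)+k\bigr)$ is a clean way to get $L(\rho)\leq\norm{\rho}_0$; your finiteness check is right (for $k<0$ one has $c\,d(x,z^kp)+k\geq c\,d(p,z^{-k}p)-c\,d(x,p)+k\geq -k - c\,d(x,p)+k$, bounded below), and equivariance is immediate. For the moreover clause you correctly identify that the substantive input is that along leaves of the stretch lamination $df(\dot\lambda)=L_f$, not merely $\|df\|=L_f$; this is indeed part of what Theorem~\ref{thm:maps} packages, since for a $C^1$ map the stretch set being a geodesic lamination means precisely that the Lipschitz constant is realized in the leaf direction. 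The final step (perturb to embedded in dimension $3$) is fine by general position. So your write-up is a reasonable self-contained account of a result the paper only cites, and your ``moreover'' argument is essentially the same mechanism the paper itself deploys in Section~\ref{sec:stretchtubes}.
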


        For $0$-surgery classes, there is a straight forward estimate relating the boundary slope length to the stable norm of the extended class.

        \begin{lem}\label{lem:nzbound} Let $W$ be a cusped hyperbolic 3-manifold.
            If $\rho\in H^1(W)$ is a $0$-surgery class with boundary slope $\bs$, then $$\norm{\rho_{\bs}}_0> \frac{n}{2\pi}(\ell^2-28.78),$$ where $\ell = \nlen(\bs)$ is the total normalized length of the slope $\bs$ and $n$ is the number of cusps.
        \end{lem}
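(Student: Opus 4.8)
The plan is to bound the comass $\norm{\rho_\bs}_0$ from below by testing it against one especially efficient chain — the sum of the Dehn filling core geodesics — and then to control the total length of those geodesics via quantitative Neumann--Zagier. Write $M = W(\bs)$, let $c = c_1\sqcup\cdots\sqcup c_n$ be the link of core geodesics of the filling ($n$ the number of cusps of $W$), and set $\ell = \nlen(\bs)$; one may assume $\ell > 7.823$, the only regime used later, so that Theorem~\ref{thm:qNZ} is available.

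The first step is to compute the period of $\rho_\bs$ on each core. Because $\rho$ is a $0$-surgery class, its dual surface $F$ meets each boundary torus of $W$ in a single essential simple closed curve (Section~\ref{sec:zerosurgery}); hence the capped-off surface $F_\bs$, Poincar\'e dual to $\rho_\bs$ in the closed oriented $3$-manifold $M$, meets each $c_i$ transversely in exactly one point, so the period $\rho_\bs(c_i) = [F_\bs]\cdot[c_i]$ equals $\pm 1$. Reversing the orientation of the cores with period $-1$ — which changes neither $\len(c_i)$ nor $|c_i|_M$ — one may take $\rho_\bs(c_i) = 1$ for all $i$. Next I would form the chain $g = \sum_{i=1}^n c_i \in \cG(M)$: this is legitimate since each $c_i$ is a closed geodesic of positive length and $M$ is closed, so there is no peripheral exclusion to worry about. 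Then $\rho_\bs(g) = n$ and $|g|_M = \sum_i |c_i|_M = \sum_i \len(c_i) = |c|_M$, each core being the length-minimizer in its own free homotopy class, whence from the definition of the stable norm
$$\norm{\rho_\bs}_0 \;\geq\; \frac{\rho_\bs(g)}{|g|_M} \;=\; \frac{n}{|c|_M}.$$

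Finally, Theorem~\ref{thm:qNZ} gives $|c|_M < 2\pi/(\ell^2 - 28.78)$, and combining this with the previous inequality yields $\norm{\rho_\bs}_0 > \tfrac{n}{2\pi}(\ell^2 - 28.78)$, as claimed. There is no real obstacle here: the content is carried entirely by Theorem~\ref{thm:qNZ}, and the only point demanding attention is the intersection-number bookkeeping — that $F_\bs\cdot c_i = \pm 1$ and that the periods can all be normalized to $+1$ simultaneously — which is immediate from the definition of a $0$-surgery class together with the observation at the end of Section~\ref{sec:zerosurgery} that $F_\bs$ meets each filling core exactly once. (For slopes too short for Theorem~\ref{thm:qNZ} the displayed inequality still forces $\norm{\rho_\bs}_0 > 0$, and the asserted bound is vacuous once $\ell^2 \le 28.78$; in any case only large $\ell$ is needed in the sequel.)
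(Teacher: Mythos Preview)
Your proof is correct and follows essentially the same approach as the paper: orient the Dehn filling cores so that $\rho_\bs$ evaluates to $n$ on their sum, then bound the stable norm below by $n/|c|_M$ and invoke the quantitative Neumann--Zagier estimate (Theorem~\ref{thm:qNZ}) on $|c|_M$. Your write-up is slightly more explicit about the intersection-number bookkeeping and the hypothesis $\ell>7.823$, but the argument is the same.
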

        \begin{proof}
            Let $c$ be the unoriented multicurve of the Dehn filling cores in $M=W(\bs)$.
            For a given orientation on $c$, Poincaré duality gives that $\rho_{\bs}(c)$ is the algebraic intersection of $c$ and the Poincaré dual surface $F_\bs$
            The geometric intersection of each core curve $c_i$ with $F_\bs$ is 1. The multicurve can be oriented so that $\rho(c)=n$, where $n$ is the number of cusps. Quantitative Neumann-Zagier (Theorem \ref{thm:qNZ}) then gives an upper bound on the total length of the core curves in the Dehn filling, giving the estimate $||\rho||_0> \frac{n}{2\pi}(\ell^2-28.78)$.
        \end{proof}

        \section{Thick stable norm and Thurston norm}\label{sec:thickstablethurstoncomparison}

        For 3-manifolds, there is a natural topological norm, the \textbf{Thurston norm}, which is induced by Poincaré-Lefschetz duality $H^1(W)\cong H_2(W,\d W)$: $$\norm{\rho}_{Th}=\min\limits_{[S]\text{ dual to } \rho}\chi_-(S),$$
        where $\chi_-(S)$ is defined for connected surfaces as $\max\{-\chi(S),0\}$ and is extended to disconnected surfaces by summing over components.

        In this section, we relate the thick stable norm and the Thurston norm in Dehn surgery families.
        We follow the basic ideas of the proofs of norm comparisons in \cite{BD}; see also \cite{Hans} for complimentary results on $L^2$ harmonic forms in the cusped case. To do this, we need to know something about harmonic forms and least area surfaces in cusped hyperbolic 3-manifolds.

       The \textbf{harmonic norm} $$||\rho||_2^2=\inf\left\{\int_W\omega\wedge\star\omega~:~ \omega\in \Omega^1(W) \text{ representing $\rho$}\right\}$$
     is another natural geometric complexity measure for cohomology classes of Riemannian manifolds.  For closed Riemannian manifolds, this norm is realized by harmonic forms, where harmonic means $$d\omega=d^*\omega=0.$$ For compact Riemannian manifolds with boundary, this remains true when one imposes the Neumann boundary condition on $\omega$, which says $$i^*\star \omega = 0,$$ where $i:\d W\to W$ is the inclusion. More importantly for our application, the Neumann boundary condition facilitates Poincaré duality and enables a mean-value inequality.

     We require the following mean-value inequality of \cite{DiCerboStern}, slightly rephrased for our setting.
     \begin{thm}[Proposition 47 \cite{DiCerboStern}]\label{thm:DiCerboStern} Let $W$ be a cusped hyperbolic 3-manifold manifold and let $\mu$ be a Margulis constant that is smaller than $\sys(W)/4$. Let $h_\mu$ be a harmonic form satisfying Neumann boundary conditions for $W_{\geq\mu}$. Then there is a constant $C$ depending on $W$ and $\mu$ such that $||h_\mu||_\infty\leq C||h_\mu||_2$.
     \end{thm}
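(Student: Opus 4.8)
I would run the classical Bochner--Moser argument; this is essentially Proposition~47 of \cite{DiCerboStern}. Since $W$ is hyperbolic its Ricci curvature is $\equiv -2$, so the Weitzenb\"ock formula applied to the closed and coclosed form $h_\mu$ gives $\nabla^*\nabla h_\mu = 2\,h_\mu$ on $W_{\ge\mu}$. Together with Kato's inequality $\bigl|\nabla|h_\mu|\bigr|\le|\nabla h_\mu|$ (valid away from the zeros of $h_\mu$ and extended across them by the usual regularization) this gives the distributional differential inequality
\[
\Delta\,|h_\mu| + 2\,|h_\mu|\;\ge\;0
\]
on $W_{\ge\mu}$, where $\Delta$ is the nonpositive Laplace--Beltrami operator; that is, $|h_\mu|$ is a nonnegative weak subsolution of a Schr\"odinger operator with constant potential. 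Feeding this into the De Giorgi--Nash--Moser local boundedness estimate for subsolutions yields, for any $x$ around which a ball $B_{2r}(x)$ of definite volume is available,
\[
\sup_{B_r(x)}|h_\mu|^2\;\le\;\frac{A}{\vol\,B_{2r}(x)}\int_{B_{2r}(x)}|h_\mu|^2,
\]
with $A$ depending only on $r$, the uniform lower Ricci bound, and a doubling constant for volumes of concentric balls. Because $h_\mu$ realizes the harmonic norm, $\int_{W_{\ge\mu}}|h_\mu|^2=\|h_\mu\|_2^2$, so everything reduces to producing, uniformly in $x$, a ball of radius $r=r(W,\mu)>0$ and volume $\ge v(W,\mu)>0$ --- possibly after slightly enlarging the domain.

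For the deep interior this follows from the hypothesis $\mu<\sys(W)/4$: it forces the $\mu$-thin part of $W$ to consist of cusps only (no closed geodesic of $W$ is short enough to create a Margulis tube), so $W_{\ge\mu}$ is $W$ with a fixed family of horoball cusp neighborhoods removed --- compact, with flat toral boundary and with $\inj_x\ge\mu/2$ at every interior point. For $x$ at distance $\ge\mu/2$ from $\partial W_{\ge\mu}$ I would take $r=\mu/8$: then $B_{2r}(x)$ is embedded and interior, and volume comparison in curvature $-1$ bounds $\vol\,B_{2r}(x)$ above and below in terms of $\mu$ alone. This already gives the estimate on the part of $W_{\ge\mu}$ at distance $\ge\mu/2$ from $\partial W_{\ge\mu}$, with $C=C(W,\mu)$.

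On the collar of $\partial W_{\ge\mu}$ the Neumann boundary condition does the work. The condition $i^*\!\star h_\mu=0$ is exactly the absolute boundary condition for the Hodge Laplacian on $1$-forms --- it says the normal component of $h_\mu$ vanishes along $\partial W_{\ge\mu}$ --- and, since $h_\mu$ is also closed and coclosed, reflecting it across the toral boundary produces a form $Dh_\mu$ on the double $DW_{\ge\mu}$ that is harmonic for the doubled ($C^0$, piecewise smooth) metric, continuous everywhere, smooth on each copy, with $\|Dh_\mu\|_{L^2(DW_{\ge\mu})}^2=2\|h_\mu\|_2^2$. The function $|Dh_\mu|$ is invariant under the reflection involution, hence has vanishing normal derivative along the seam, so the Bochner subsolution inequality and Moser's estimate above extend across the seam without change (Moser iteration only needs a uniformly elliptic divergence-form operator with bounded measurable coefficients). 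Since $DW_{\ge\mu}$ has uniformly bounded geometry, every point is now the center of a ball of radius $\mu/8$ of volume $\ge v(W,\mu)>0$, and the interior argument applied to $Dh_\mu$ gives $|h_\mu(x)|^2=|Dh_\mu(x)|^2\le C(W,\mu)\|h_\mu\|_2^2$ for every $x\in W_{\ge\mu}$, hence $\|h_\mu\|_\infty\le C\|h_\mu\|_2$.

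The step I expect to need the most care is this boundary analysis: verifying that the Neumann condition makes the reflected form weakly harmonic across the (only Lipschitz) seam, and that the Bochner inequality and Moser iteration genuinely survive there. An equivalent route that avoids the double is a direct Neumann-boundary mean-value inequality --- integrate the Bochner identity against a cutoff near $\partial W_{\ge\mu}$ and discard the boundary term using $i^*\!\star h_\mu=0$ and the bounded second fundamental form of the horospherical torus --- which trades the gluing bookkeeping for a boundary-integral estimate. In either approach the constant $C$ is not universal: it depends on $W$ through $\sys(W)$ and the chosen cusp neighborhoods, and on the Margulis constant $\mu$, exactly as in the statement.
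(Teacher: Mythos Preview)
Your proposal is correct and essentially reconstructs the analytic content of DiCerbo--Stern's Proposition~47. The paper does not reprove this result; it simply cites that proposition and explains why the hypotheses transfer: DiCerbo--Stern work with a drilled negatively curved closed manifold, but their argument uses only the drilled piece, so it applies to $W_{\geq\mu}$ directly (and the smooth horospherical boundary lets one sidestep their ``modified tubes''). The paper records that the resulting constant depends on the second fundamental form of the boundary, which matches your \emph{alternative} route --- integrating the Bochner identity against a cutoff and controlling the boundary term via $i^*\!\star h_\mu=0$ and the bounded mean curvature of the horospherical tori --- more closely than your primary doubling argument. Your doubling approach is a legitimate variant: it trades the explicit second-fundamental-form dependence for a regularity check of weak harmonicity across a Lipschitz seam, and you correctly flag that as the step needing care. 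Either way the Bochner--Kato--Moser core is the same, and the hypothesis $\mu<\sys(W)/4$ is used exactly as you say, to ensure the $\mu$-thin part is purely cuspidal so that $W_{\geq\mu}$ has uniformly bounded geometry.
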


     \begin{proof} In Proposition 47 in \cite{DiCerboStern}, the manifold $M_T$ in their statement is obtained from drilling a closed manifold with negative curvature and has its induced metric, the argument only uses the drilled manifold however so applies to the manifold $W_{\geq\mu}$. As the boundary of $W_{\geq \mu}$ is smooth, one can avoid the issue of "modified tubes" in their Proposition 47. Alternatively, one can put $W_{\geq\mu}$ into their setting by taking a large Dehn filling of $W$ with the metric from the Gromov-Thurston $2\pi$-Theorem, then drilling.
     As stated in \cite{DiCerboStern}, the constant $C$ then depends on the Margulis constant and the second fundamental form of the boundary, which in turn only depends on $W$ and the Margulis constant.
     \end{proof}

        To relate the harmonic norm to the Thurston norm, we need the following theorem about least area surfaces.

        \begin{thm}[Theorem 4.4 \cite{HassRubinsteinWang}, \cite{Ruberman}, \cite{CollinHauswirthRosenberg}]\label{thm:HRW}
        Let $\overline W$ be a compact 3-manifold whose interior $W$ admits a complete hyperbolic metric of finite volume. Let $\overline S$ be an essential properly embedded
        surface in $\overline W$ of finite type.
        Then $S := \interior(\overline S)$ is properly homotopic in $W$ to a surface $S'$ with
        least area in its homotopy class. Moreover, the area of $S'$ is bounded by $2\pi\chi_-(S)$.
        \end{thm}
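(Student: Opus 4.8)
The statement splits into two parts: (i) existence of a least-area surface $S'$ properly homotopic to $S=\interior(\overline S)$, and (ii) the a priori bound $\area(S')\le 2\pi\chi_-(S)$. For (i) the plan is to reduce to the classical compact case by truncating the cusps. Choose an exhaustion of $W$ by compact submanifolds $W_n$ obtained by cutting each cusp off at depth $n$, so that $\partial W_n$ is a union of flat tori, and (after a proper isotopy) arrange that $\overline S$ meets each $\partial W_n$ transversally in slope curves, so $S_n:=S\cap W_n$ is an essential (incompressible and $\partial$-incompressible) surface in the compact irreducible manifold $W_n$ with $\area(S_n)\le\area(S)<\infty$. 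The classical existence theory for least-area surfaces in compact irreducible $3$-manifolds (Schoen--Yau, Sacks--Uhlenbeck for existence; Meeks--Yau and Freedman--Hass--Scott for embeddedness in the incompressible case) yields an embedded least-area surface $S_n'$, properly homotopic rel $\partial W_n$ to $S_n$, with $\area(S_n')\le\area(S_n)$. To pass to a limit as $n\to\infty$ one uses that the cross-sectional tori $T^2\times\{t\}$ of a rank-two cusp are horospherical, hence strictly mean-convex toward the cusp; by the maximum principle they act as barriers confining the $S_n'$ to a compact core up to controlled ends running out the cusps, and together with interior curvature estimates for minimal surfaces in a space of bounded geometry this gives uniform local area and curvature bounds. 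Extracting a smooth sublimit produces an embedded, finite-area $S'\subset W$ lying in the proper homotopy class of $S$ and realizing least area there. This cusped limiting analysis is exactly what is carried out in \cite{HassRubinsteinWang}, \cite{Ruberman}, and \cite{CollinHauswirthRosenberg}, which I would invoke for this step.

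For the area bound, work on the minimal surface $S'$. The Gauss equation reads $K_{S'}=-1+\det A$, where $A$ is the second fundamental form of $S'$ in the hyperbolic manifold $W$; minimality makes the principal curvatures $\pm\kappa$, so $\det A=-\kappa^2\le 0$ and hence $K_{S'}\le -1$ pointwise. As $\overline S$ is essential it is neither a sphere nor a disk, so $\chi(\overline S)\le 0$ and $\chi_-(S)=-\chi(\overline S)=-\chi(S')$, proper homotopy preserving topology. Truncate $S'$ at cusp depth $n$ to get a compact surface $S'_{\le n}$ with $\chi(S'_{\le n})=\chi(\overline S)$ whose boundary consists of slope curves on the flat tori $T^2\times\{n\}$. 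Gauss--Bonnet gives
\[
\area(S'_{\le n})\ \le\ -\int_{S'_{\le n}}K_{S'}\,dA\ =\ -2\pi\chi(\overline S)+\int_{\partial S'_{\le n}}\kappa_g\,ds .
\]
The boundary curves lie on tori whose Euclidean diameter decays like $e^{-n}$, so their lengths tend to $0$, while their geodesic curvature in $S'$ stays uniformly bounded (each is a horocyclic loop of bounded ambient curvature, and the tangential correction is controlled by $\|A\|_\infty$ along $S'$); hence $\int_{\partial S'_{\le n}}\kappa_g\,ds\to 0$. Letting $n\to\infty$, with $\area(S'_{\le n})\to\area(S')$, gives $\area(S')\le -2\pi\chi(\overline S)=2\pi\chi_-(S)$.

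The genuinely delicate point is the existence step in the cusped setting: one must prevent the minimizing sequence from sliding off to infinity in a cusp, and must verify that the limit surface keeps its ends in the correct cusps along the correct slopes --- so that it is \emph{properly} homotopic to $S$, not merely homotopic after collapsing ends --- and stays embedded. This is where the noncompactness of $W$ really intervenes, and where I would rely on the analyses of \cite{HassRubinsteinWang}, \cite{Ruberman}, and \cite{CollinHauswirthRosenberg} (one could alternatively try to import the closed case through a large Dehn filling carrying the $2\pi$-metric, but the cusp-truncation argument seems more direct). Once $S'$ is available, the area estimate is the routine Gauss-equation-and-Gauss--Bonnet computation above.
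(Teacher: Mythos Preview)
Your proposal is correct and matches the paper's own treatment: the paper does not give a self-contained proof but simply cites \cite{HassRubinsteinWang}, \cite{Ruberman}, \cite{CollinHauswirthRosenberg} for existence and then remarks that the area bound follows from the Gauss-equation inequality $K_{S'}\le -1$ together with Gauss--Bonnet, exactly as you outline. The one step you handle informally---that the boundary contribution $\int_{\partial S'_{\le n}}\kappa_g\,ds\to 0$, which needs control on the end geometry of $S'$ (bounded $\|A\|$ and the correct slope curves) deep in the cusps---is precisely what the paper packages by invoking the finite total curvature theorem of Collin--Hauswirth--Rosenberg rather than arguing via truncation directly.
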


        The existence statement is the statement appearing in Theorem 4.4 \cite{HassRubinsteinWang}, see also \cite{Ruberman} or \cite{CollinHauswirthRosenberg} for the theorem with isotopy replacing homotopy. The area estimate follows for instance from the finite total curvature theorem of Collin-Hauswirth-Rosenberg and the upper curvature bound given by the curvature of hyperbolic space. See also Remark\nopagebreak~3 and the remark after the proof of Theorem 10 in \cite{CollinHauswirthMazetRosenberg} for two-sided area bounds.

        The proof of Theorem \ref{thm:chainnormineqs} below includes an analogue of the upper bound in the main theorems of \cite{BD} and \cite{Hans} for relative cycles, though unfortunately with a non-explicit constant and with additional dependence on cusp shapes and the Margulis constant used to truncate the cusped manifold.
        In \cite{Hans}, Han shows that the harmonic representative in the cusped manifold of a class dual to a relative cycle with nonempty boundary is not in $L^2$, so some kind of thin-part truncation is needed for any $L^2$ norm comparison. See also the examples with vanishing injectivity radii in Theorem 1.4 of \cite{BD} and the analysis there; these examples also make an appearance here in Theorem\nopagebreak~\ref{thm:fiberedexamples}.

        \begin{thm}\label{thm:chainnormineqs}
            Let $W$ be a cusped hyperbolic 3-manifold and let $\rho\in H^1(W)$. Let $\mu< \sys(W)/4$~ be a Margulis constant.
            Then there is a constant $C$ depending only on $W$ and $\mu$ such that
            $\norm{\rho}_\mu\leq C\norm{\rho}_{Th}.$
        \end{thm}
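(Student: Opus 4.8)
The plan is to adapt the Brock--Dunfield comparison of the harmonic ($L^2$) and Thurston norms to the compact piece $W_{\geq\mu}$, chaining three inequalities: $2\pi\|\rho\|_{Th}\ge\area(S'')$ for a least-area dual surface $S''$, then $\area(S'')\ge \|\rho\|_2^2/\|\omega\|_\infty$ where $\omega$ is the harmonic representative, then $\|\omega\|_\infty\lesssim\|\omega\|_2=\|\rho\|_2$ by the mean-value inequality, and finally $\|\rho\|_\mu\le\|\omega\|_\infty$. First I would set up the truncation. Since $W$ deformation retracts onto $W_{\geq\mu}$ the restriction $\rho':=\rho|_{W_{\geq\mu}}$ loses nothing, and because $\mu<\sys(W)$ the thin part of $W$ is purely cuspidal, so $W_{\geq\mu}$ is $W$ with horoball cusp neighborhoods removed and has smooth flat-torus boundary. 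Let $\omega$ be the harmonic representative of $\rho'$ on $W_{\geq\mu}$ satisfying the Neumann condition $i^*\star\omega=0$; Hodge theory for compact manifolds with boundary supplies a unique such $\omega$ with $\|\omega\|_{L^2(W_{\geq\mu})}=\|\rho\|_2$.

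For the topological input I would take a Thurston-norm-minimizing surface $S$ dual to $\rho$ in $W$ (essential, incompressible, properly embedded, $\chi_-(S)=\|\rho\|_{Th}$) and feed it into Theorem~\ref{thm:HRW}, replacing it up to proper homotopy by a least-area embedded $S'$ with $\area(S')\le 2\pi\|\rho\|_{Th}$. After a small perturbation making $S'$ transverse to the horotori $\partial W_{\geq\mu}$, set $S''=S'\cap W_{\geq\mu}$; then $\area(S'')\le 2\pi\|\rho\|_{Th}$, and naturality of Lefschetz duality under intersection with the codimension-zero submanifold $W_{\geq\mu}$ makes $[S'']\in H_2(W_{\geq\mu},\partial W_{\geq\mu})$ dual to $\rho'$, i.e. $[S'']=\rho'\frown[W_{\geq\mu},\partial W_{\geq\mu}]$. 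The crux is then the identity
$$\|\rho\|_2^2=\int_{W_{\geq\mu}}\omega\wedge\star\omega=\int_{S''}\star\omega,$$
which holds because the Neumann condition promotes the closed form $\star\omega$ to a representative of a class in $H^2(W_{\geq\mu},\partial W_{\geq\mu})$, and pairing that relative class against $[S'']=\rho'\frown[W_{\geq\mu},\partial W_{\geq\mu}]$ recovers $\int\omega\wedge\star\omega$. Since the Hodge star is a pointwise isometry and restricting a $2$-form to a surface does not increase norms, $\int_{S''}\star\omega\le\|\omega\|_\infty\area(S'')\le 2\pi\|\omega\|_\infty\|\rho\|_{Th}$. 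Now the Di Cerbo--Stern mean-value inequality (Theorem~\ref{thm:DiCerboStern}, applicable precisely because $\mu<\sys(W)/4$ and $\omega$ is harmonic--Neumann) gives $\|\omega\|_\infty\le C_1\|\rho\|_2$ with $C_1=C_1(W,\mu)$, so $\|\rho\|_2^2\le 2\pi C_1\|\rho\|_2\|\rho\|_{Th}$; cancelling $\|\rho\|_2$ (harmless, since $\rho'=0$ only when $\rho=0$) yields $\|\rho\|_2\le 2\pi C_1\|\rho\|_{Th}$ and hence $\|\omega\|_\infty\le 2\pi C_1^2\|\rho\|_{Th}$.

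Finally, to reach the thick stable norm: for a chain $g=\sum r_i\gamma_i\in\cG(W_{\geq\mu})$ choose each $\gamma_i$ realizing $|\gamma_i|_{W_{\geq\mu}}$ inside the thick part, so $\rho(g)=\sum r_i\int_{\gamma_i}\omega$ and $|\rho(g)|\le\|\omega\|_\infty\sum|r_i|\,|\gamma_i|_{W_{\geq\mu}}=\|\omega\|_\infty\,|g|_{W_{\geq\mu}}$; taking the supremum over $g$ gives $\|\rho\|_\mu\le\|\omega\|_\infty\le 2\pi C_1^2\|\rho\|_{Th}$, so the theorem holds with $C=2\pi C_1^2$.

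The hard part will be the relative bookkeeping absent from the closed-manifold arguments of \cite{BD}: verifying carefully that the Neumann boundary condition is exactly what turns $\star\omega$ into a relative cohomology class, that Lefschetz duality is sufficiently natural that $S'\cap W_{\geq\mu}$ represents the dual of $\rho|_{W_{\geq\mu}}$, and that the least-area surface of Theorem~\ref{thm:HRW} --- which lives a priori in the complete cusped $W$ --- can be truncated along the horotori (a transversality perturbation) without spoiling the area bound or the duality. A secondary point is invoking Hodge theory with Neumann conditions on the manifold-with-boundary $W_{\geq\mu}$. One should also note that the statement is non-vacuous only where $\|\rho\|_{Th}>0$, but this is all of $H^1(W)\setminus\{0\}$: a class of vanishing Thurston norm would be dual to an essential surface of non-negative Euler characteristic, impossible in a finite-volume hyperbolic $3$-manifold, so the Thurston norm here is genuinely a norm and the argument applies verbatim.
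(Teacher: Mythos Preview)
Your proposal is correct and follows essentially the same route as the paper: harmonic Neumann representative on $W_{\geq\mu}$, truncated least-area dual surface from Theorem~\ref{thm:HRW}, the Poincar\'e duality identity $\int_{W_{\geq\mu}}\omega\wedge\star\omega=\int_{S''}\star\omega$ enabled by the Neumann condition, the Di\,Cerbo--Stern mean-value inequality applied twice, and finally $\|\rho\|_\mu\le\|\omega\|_\infty$ via integrating over thick-part representatives. The only differences are cosmetic (notation, and your explicit remarks on transversality and on the Thurston norm being a genuine norm in the hyperbolic setting).
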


        \begin{proof}
            The inequality is trivial if $\rho=0$ so we assume it is not.
            Let $h_{\mu}$ be the harmonic representative of $\rho$ with Neumann boundary condition for $W_{\geq \mu}$ and let $S'$ be the least area surface in Theorem \ref{thm:HRW} associated to a Thurston norm minimizing surface $S$ dual to $\rho$. Let $S'_\mu=S'\cap W_{\geq\mu}$; this represents the Poincaré dual of the class $\rho$ pulled back to $W_\geq\mu$. We denote by $\norm{\cdot}_{t,p}$ the $L^p$ norm of forms on the submanifold $W_{\geq t}\subset W$.

            First we follow the argument in Equation 5.2 in \cite{BD} with the pointwise estimate from Theorem \ref{thm:DiCerboStern} to obtain a bound on the harmonic norm by the Thurston norm:

            \begin{align*}
                \norm{h_\mu}^2_{\mu,2} &= \int_{W_{\geq\mu}}h_{\mu}\wedge \star h_{\mu} \\
                             &= \int_{S_{\mu}'} \star h_{\mu} \text{, by Poincaré duality via the Neumann boundary condition}\\
                             &\leq \norm{h_{\mu}}_{\mu,\infty}\area(S') \text{, using that $|\star h_\mu|=|h_\mu|$ and $\area(S'_\mu)\leq\area(S')$}\\
                             &\leq 2\pi\norm{h_{\mu}}_{\mu,\infty}||\rho||_{Th} \text{, by the area-Euler characteristic estimate for $S'$}\\
                             &\leq 2C\pi\norm{h_{\mu}}_{\mu,2}||\rho||_{Th} \text{, using Theorem \ref{thm:DiCerboStern}.}
            \end{align*}
            Dividing by $\norm{h_\mu}_{\mu,2}$ gives $$\norm{h}_{\mu,2}\leq 2C\pi||\rho||_{Th}.$$

            Then we have $$\norm{\rho}_\mu\leq \norm{h_\mu}_{\mu,\infty}$$ because for any $\g\in\cG(W_{\geq\mu})$, the averages satisfy
            $$\rho(\g)/\len(\g) = 1/\len(\g)\int_\g h_\mu\leq ||h_\mu||_{\mu,\infty}.$$
            Using again the pointwise bound from Theorem \ref{thm:DiCerboStern}, we combine our estimates and constants to obtain $$\norm{\rho}_\mu\leq  C||\rho||_{Th}.$$
        \end{proof}
        Using the Drilling Theorem, we can transport this to the thick stable norm in large Dehn fillings.

        \begin{lem}\label{lem:drillnorm} Let $W$ be a cusped hyperbolic 3-manifold and $\mu<\min\{\sys(W)/4,\log(3)/1.2\}$ a Margulis constant. There are constants $C,L>0$ depending on $W$ and $\mu$ such that if $\rho\in H^1(W)$ is a surgery class with compatible slope $\bs$ satisfying $\nlen(\bs)>L$, then $||\rho_\bs||_{\mu}\leq C||\rho||_{\mu}.$
        \end{lem}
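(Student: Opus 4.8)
The plan is to use the Effective Drilling Theorem to compare the thick part of the filled manifold $M=W(\bs)$ with the thick part of $W$ itself, viewed as $M$ with the link $c$ of Dehn filling core geodesics drilled out, and then to transport the supremum defining $\norm{\rho_\bs}_\mu$ from $M$ to $W$. First I would fix $J>1$, say $J=2$, and apply Theorem~\ref{thm:drill} with $\e=\mu$; the hypothesis $\mu<\log 3$ is automatic. By Quantitative Neumann--Zagier (Theorem~\ref{thm:qNZ}), the drilling hypothesis $|c|_M<\ell_0(J,\e)$ holds once $\nlen(\bs)>L$ for a suitable $L=L(W,\mu)$; since $M\setminus c$ is a finite-volume hyperbolic manifold homeomorphic to $W$, Mostow--Prasad rigidity identifies it isometrically with $W$. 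The Drilling Theorem then provides a $J$-biLipschitz embedding $\Phi\colon M_{\geq\mu}\into W_{\geq\mu/1.2}$ which is the restriction of the natural identification $M\setminus c\cong W$.

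Next, given $g\in\cG(M_{\geq\mu})$ with $\rho_\bs(g)>0$ (other $g$ do not affect the supremum, since stable norms are nonnegative), I would let $\hat g$ be a length-minimizing representative in $M_{\geq\mu}$ and set $g'=\Phi(\hat g)$, a loop in $W_{\geq\mu/1.2}$ of length at most $J\,|g|_{M_{\geq\mu}}$. Two points need checking. First, $g'$ is a genuine element of $\cG(W_{\geq\mu/1.2})$: because $\rho_\bs(g)\neq0$, the loop $\hat g$ is homotopically nontrivial in $M$, hence nontrivial in $\pi_1 W$ (it is a loop in $W=M\setminus c$), hence nontrivial in $\pi_1 W_{\geq\mu/1.2}$ by incompressibility, so it has positive minimal length. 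Second, $\rho_\bs(g)=\rho(g')$: by definition of the extended class $i^*\rho_\bs=\rho$ for the inclusion $i\colon W=M\setminus c\into M$, and $g$ and $g'$ correspond under the identification $M\setminus c\cong W$, which carries $i^*\rho_\bs$ to $\rho$. Therefore
\[
\frac{\rho_\bs(g)}{|g|_{M_{\geq\mu}}}\;\leq\; J\,\frac{\rho(g')}{\len(g')}\;\leq\; J\,\frac{\rho(g')}{|g'|_{W_{\geq\mu/1.2}}}\;\leq\; J\,\norm{\rho}_{\mu/1.2},
\]
using $\len(g')\geq |g'|_{W_{\geq\mu/1.2}}$ and $\rho(g')>0$; taking the supremum over such $g$ gives $\norm{\rho_\bs}_\mu\leq J\,\norm{\rho}_{\mu/1.2}$.

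It remains to absorb the gap between the Margulis constants $\mu$ and $\mu/1.2$ by a comparison inside $W$. Since $\mu<\sys(W)/4$, the $\mu$-thin part of $W$ consists only of cusps, and $W_{\geq\mu/1.2}$ deformation retracts onto $W_{\geq\mu}$ by a retraction supported in those cusps and given there by radial projection of the horoball height coordinate; as the $\mu$- and $\mu/1.2$-horotori sit at heights differing by the factor $1.2$, this retraction is $1.2$-Lipschitz. Applying it to a length-minimizing representative shows $|g|_{W_{\geq\mu}}\leq 1.2\,|g|_{W_{\geq\mu/1.2}}$ for every $g\in\cG(W_{\geq\mu/1.2})$, with $\rho$ unchanged on the corresponding free homotopy class, so $\norm{\rho}_{\mu/1.2}\leq 1.2\,\norm{\rho}_\mu$. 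Combining the two estimates yields $\norm{\rho_\bs}_\mu\leq 1.2J\,\norm{\rho}_\mu$, so $C=1.2J$ works. I expect no genuinely hard step here; the only delicate point is the bookkeeping of the previous paragraph when a filling-core class or other peripheral class of $W$ arises as a $\Phi$-image, where one uses $i^*\rho_\bs=\rho$ together with the slope-compatibility relation $\rho(s_i)=0$ to see that such a class still defines an element of $\cG(W_{\geq\mu/1.2})$ with the correct $\rho$-value.
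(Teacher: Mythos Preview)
Your proof is correct and follows essentially the same strategy as the paper: invoke the Drilling Theorem together with Quantitative Neumann--Zagier to transport loops between the thick parts of $M=W(\bs)$ and $W$, then absorb the mismatch in Margulis constants by a comparison internal to $W$. The only differences are cosmetic: the paper applies Theorem~\ref{thm:drill} with $\e=1.2\mu$ (whence the hypothesis $\mu<\log(3)/1.2$) and compares $\norm{\cdot}_{1.2\mu}$ with $\norm{\cdot}_\mu$ in $W$, while you take $\e=\mu$ and compare $\norm{\cdot}_{\mu/1.2}$ with $\norm{\cdot}_\mu$; your choice has the advantage that the map $M_{\geq\mu}\into W_{\geq\mu/1.2}$ directly accepts all loops in $M_{\geq\mu}$. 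Your explicit radial-retraction argument and the bookkeeping verifying $\rho_\bs(g)=\rho(g')$ fill in details the paper leaves implicit. One small caveat: the horotorus heights at injectivity radii $\mu/2$ and $\mu/2.4$ differ by the factor $\sinh(\mu/2)/\sinh(\mu/2.4)$ rather than exactly $1.2$, so your retraction is $C(\mu)$-Lipschitz for a constant tending to $1.2$ as $\mu\to 0$; this does not affect the conclusion, since the lemma only asks for $C=C(W,\mu)$.
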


        \begin{proof} The Lipschitz embedding in the Drilling Theorem (Theorem \ref{thm:drill}) with $J=2$ and $\e=1.2\mu$ gives for sufficiently large Dehn filling slopes the estimate between the thick stable norm of $||\cdot||_{\mu}$ in $W(\bs)$ and the thick stable norm $||\cdot||_{1.2\mu}$ in $W$ that $||\cdot||_{\mu}\leq2||\cdot||_{1.2\mu}$ as the pullback map induces an isomorphism on cohomology and the map itself can only change lengths by at most a factor of 2. 
            One can also compare the thick stable norms $||\cdot||_{1.2\mu}$ and $||\cdot||_{\mu}$ in $W$; this comparison depends only on $W$ and $\mu$.  Because $\rho_\bs$ extends $\rho$, the restriction of $\rho_\bs$ to the thick part of $W(\bs)$ is mapped to $\rho$ restricted to the thick part of $W$ by the pullback of the map in the Drilling Theorem, so indeed we have that there is a constant $C$ depending only on $W$ and $\mu$ such that $\norm{\rho_\bs}_\mu\leq C\norm{\rho}_\mu$.
        \end{proof}

        Theorem \ref{thm:thickstableThurston} is then an easy corollary.

        \thickstableThurston*
        \begin{proof} Combine Lemma \ref{lem:drillnorm} and Theorem \ref{thm:chainnormineqs} and the constants therein.
        \end{proof}

    \section{Some tube estimates}\label{sec:bigtubes}

        Let $W$ be a hyperbolic 3-manifold and let $\cT$ be an embedded tube in $W$ centered around a core geodesic.
         The tube $\cT$ can be given cylindrical coordinates about its core $(r, \theta, z) \in [0, \radius(\cT)]\times
         [0, 2\pi] \times [0, \e]$ with the identification $(r, \theta, \e) \equiv (r, \theta + \theta_0, 0)$ for some twist angle $\theta$; the metric on $\cT$ then takes the form
         $dr^2 + \sinh^2(r )d\theta^2 + \cosh^2(r )dz^2.$

         Let $\{T_t\}$ be the foliation of $\cT-\core(\cT)$ by Euclidean tori. For $0 < s<t<\radius(\cT)$, let $\cT_{s,t}$ be the union of leaves $T_r$ for $r\in[s,t]$. There are cylindrical projection maps $\fp_{s,t}:\cT_{s,t}\to T_s$. The projection map $\fp_{s,t}$ fixes $T_s$ and is a 1-Lipschitz homotopy equivalence. There is also an outward projection $\fp_{t,s}:\cT_{s,t}\to T_t$ which expands lengths and fixes $T_t$. The composition of inward and outward projections is the identity map $T_s\to T_s$.

         We denote by $\len_t$ the length function in the flat torus $T_t\subset\cT$.

    \begin{lem}\label{lem:lengthprojectionbound}

    Let $0<r<R\leq\radius(\cT)$ and let $T_r$ and $T_R$ be the pair of flat tori in the tube $\cT$ distance $r$ and $R$ respectively from the core $\core(\cT)$. Let $c_R$ be a rectifiable curve in $T_R$. Let $\fp_{R,r}$ be the (cylindrical) projection from $T_R$ inwards to $T_r$. Then $$\len_{r}(\fp_{R,r}(c_R))\leq(e^{-r}+e^{r-R})\len_{R}(c_R).$$

    \end{lem}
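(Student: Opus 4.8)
The plan is to use that in the cylindrical coordinates $(r,\theta,z)$ on $\cT$ the inward projection $\fp_{R,r}\colon T_R\to T_r$ is simply the identity on the $(\theta,z)$-factor, so that, regarded as a map of the underlying flat tori, it is \emph{linear}. Since a $\lambda$-Lipschitz map between metric spaces sends a rectifiable curve of length $\ell$ to one of length at most $\lambda\ell$, it suffices to bound the Lipschitz constant of $\fp_{R,r}$ as a map from $(T_R,g_R)$ to $(T_r,g_r)$, where $g_\rho=\sinh^2(\rho)\,d\theta^2+\cosh^2(\rho)\,dz^2$ is the metric induced on the radius-$\rho$ torus by the tube metric $dr^2+\sinh^2(r)\,d\theta^2+\cosh^2(r)\,dz^2$. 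Framing the estimate through the Lipschitz constant of $\fp_{R,r}$, rather than differentiating a parametrization of $c_R$, is also what lets us handle $c_R$ being only rectifiable; so there is no genuine analytic difficulty, and the substance of the proof is an elementary hyperbolic-trigonometry inequality.

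First I would compute this Lipschitz constant pointwise. For a tangent vector $(v_\theta,v_z)$, the squared ratio of its $g_r$-length to its $g_R$-length is $(\sinh^2(r)v_\theta^2+\cosh^2(r)v_z^2)/(\sinh^2(R)v_\theta^2+\cosh^2(R)v_z^2)$, which, as a function of $v_\theta^2/v_z^2$, is monotone and hence attains its maximum at one of the two coordinate directions, i.e.\ at the value $\sinh(r)/\sinh(R)$ or at $\cosh(r)/\cosh(R)$. Because $\tanh$ is increasing and $0\le r<R$, one has $\sinh(r)\cosh(R)\le\cosh(r)\sinh(R)$, hence $\sinh(r)/\sinh(R)\le\cosh(r)/\cosh(R)$, so the operator norm — and therefore the Lipschitz constant — of $\fp_{R,r}$ equals $\cosh(r)/\cosh(R)$.

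It then remains only to check the elementary inequality $\cosh(r)/\cosh(R)\le e^{-r}+e^{r-R}$. From $\cosh(R)\ge e^{R}/2$ and $\cosh(r)=(e^{r}+e^{-r})/2$ we get $\cosh(r)/\cosh(R)\le e^{r-R}+e^{-r-R}$, and since $R>0$ gives $e^{-R}\le 1$ we have $e^{-r-R}\le e^{-r}$, yielding the claimed bound. These three steps combine to give
$$\len_{r}\big(\fp_{R,r}(c_R)\big)\le\frac{\cosh(r)}{\cosh(R)}\,\len_{R}(c_R)\le\big(e^{-r}+e^{r-R}\big)\len_{R}(c_R).$$
The only point requiring any care is the one already flagged — that $c_R$ need not be smooth — which is why the whole argument is organized around the Lipschitz constant of the map $\fp_{R,r}$ rather than around an arclength integral.
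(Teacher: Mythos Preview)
Your proof is correct and follows essentially the same route as the paper: both obtain the intermediate bound $\len_r(\fp_{R,r}(c_R))\le\frac{\cosh r}{\cosh R}\len_R(c_R)$ and then estimate $\cosh r/\cosh R\le e^{-r}+e^{r-R}$ by elementary manipulation. The only difference is that the paper quotes the $\cosh r/\cosh R$ factor from a lemma of Futer--Purcell--Schleimer on the outward projection and inverts it, whereas you compute the Lipschitz constant of $\fp_{R,r}$ directly from the tube metric; your version is self-contained but otherwise the arguments coincide.
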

    \begin{proof}Lemma 8.3 in \cite{FuterPurcellSchleimerTransactions} gives for rectifiable curves $c_r$ in $T_r$ and the \emph{outward} cylindrical projection the bound $$ \frac{\cosh(R)}{\cosh(r)}\leq\frac{\len_{R}(\fp_{r,R}(c_r))}{\len_{r}(c_r)} \leq  \frac{\sinh(R)}{\sinh(r)}.$$ For the inward projection, this implies
        $$\len_{r}(\fp_{R,r}(c_R)\leq\frac{\cosh(r)}{\cosh(R)} \len_{R}(c_R).$$

        Next observe:
           $$ \frac{\cosh(r)}{\cosh(R)} =\frac{e^{R-r}}{e^R+e^{-R}} + \frac{e^{r-R}}{e^R+e^{-R}} = e^{-r}(\frac{e^R}{e^R+e^{-R}})+e^{r-R}(\frac{1}{e^R+e^{-R}})\\
                  \leq e^{-r}+e^{r-R}.$$
    \end{proof}

    Let $\sigma$ be a union of arcs in $W$ and let $\cT$ be a tube in $W$.
    Define the \textbf{tube depth} $\delta_\cT(\sigma)$ to be $\radius(\cT)-t$, where $T_t$ is the torus closest to $\core(\cT)$ in $\cT$ such that $\sigma \cap T_t\neq\emptyset$.

    \begin{lem}

        Fix $D\geq 2\log 4$ and let $\cT$ be a hyperbolic tube with radius $R\geq D+\log 4$; denote the boundary torus of $\cT$  by $T_R$.
        Let $\sigma$ be an arc in $\cT$ from $T_R$ to itself. Let $\bsigma$ be the loop obtained by attaching to $\sigma$ the Euclidean geodesic $\a$ in $T_R$ between its endpoints. Then if the depth of $\sigma$ satisfies $\delta_\cT(\sigma)>D$, the geodesic loop $\gamma$ homotopic to $\bsigma$ has length satisfying $$\len(\g) + D/4 +\len(\a)/2\leq \len(\overline\sigma).$$
    \end{lem}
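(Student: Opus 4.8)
The plan is to play two facts against each other: the trivial bound $\len(\g)\le\len(\bsigma)=\len(\sigma)+\len(\a)$, and the observation that an arc entering $\cT$ to depth exceeding $D$ is forced to spend length $\gtrsim D$ at large radius, where the hyperbolic metric strongly contracts the direction parallel to the core --- and the free homotopy class of $\bsigma$ only ``sees'' that direction.

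First I would set up a core retraction. Write $c=\core(\cT)$, $\ell=\len(c)$, and use the cylindrical coordinates $(r,\theta,z)$ of the section, so the metric is $dr^2+\sinh^2r\,d\theta^2+\cosh^2 r\,dz^2$. Define $\zeta\colon\cT\to\R/\ell\Z$ by $\zeta(r,\theta,z)=z\bmod\ell$. This is well defined because the meridian and longitude gluings defining $\cT$ both fix $z\bmod\ell$; it restricts on $c$ to the arclength parametrization, so has degree $1$ there; and since the $z$-coefficient of the metric is $\cosh^2 r\ge 1$, it is $\tfrac1{\cosh r}$-Lipschitz at a point of radius $r$. Since $\pi_1(\cT)$ is infinite cyclic generated by $[c]$, we have $[\bsigma]=[c]^{\,n}$ for some $n\in\Z$, so $\zeta\circ\bsigma$ has degree $n$; and the core traversed $|n|$ times is a loop freely homotopic to $\bsigma$, so $\len(\g)\le |n|\,\ell$. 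Parametrizing $\bsigma$ by arclength, this yields
\begin{align*}
\len(\g)\ \le\ |n|\,\ell\ \le\ \len(\zeta\circ\bsigma)\ &\le\ \int_{\bsigma}\frac{ds}{\cosh r(s)}\\
&=\ \int_{\sigma}\frac{ds}{\cosh r(s)}\ +\ \frac{\len(\a)}{\cosh R},
\end{align*}
the equality because $\a\subset T_R$.

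Second I would exhibit the ``wasted'' length in $\sigma$. Parametrize $\sigma$ by arclength from an endpoint on $T_R$ and let $s^\ast$ be the first time $r$ reaches $R-D/2$; since $\delta_\cT(\sigma)>D$, the arc attains some radius $t<R-D<R-D/2$, so $s^\ast$ exists and $r\in[R-D/2,R]$ on $[0,s^\ast]$. From $D\ge 2\log 4$ and $R\ge D+\log 4$ one gets $R-D/2\ge D/2+\log 4\ge\log 16$, so $\cosh r\ge\cosh(\log 16)=\tfrac{257}{32}$ on $[0,s^\ast]$, whence $1-\tfrac1{\cosh r}\ge\tfrac78$ there. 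As $s^\ast$ is at least the radial distance $D/2$ covered, and $1-\tfrac1{\cosh r}\ge 0$ along all of $\sigma$, we get $\int_\sigma\bigl(1-\tfrac1{\cosh r}\bigr)ds\ge\tfrac78\cdot\tfrac D2=\tfrac7{16}D$, i.e.
$$\int_\sigma\frac{ds}{\cosh r}\ \le\ \len(\sigma)-\tfrac7{16}D.$$
Since also $\cosh R\ge\cosh(\log 16)>2$, we have $\tfrac{\len(\a)}{\cosh R}\le\tfrac{\len(\a)}2$, so combining with the display from the previous paragraph,
$$\len(\g)\ \le\ \len(\sigma)-\tfrac7{16}D+\frac{\len(\a)}{\cosh R}\ \le\ \len(\sigma)+\tfrac{\len(\a)}2-\tfrac D4,$$
which rearranges to $\len(\g)+\tfrac D4+\tfrac{\len(\a)}2\le\len(\sigma)+\len(\a)=\len(\bsigma)$, as wanted.

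The main obstacle is conceptual and lies in the first step: one should not try to shorten $\bsigma$ by pushing the boundary arc $\a$ into the tube (this costs $2(R-r)$ of radial travel for a contraction factor only $\sim e^{r-R}$, and never pays for itself), but instead control $\len(\g)$ through the winding number of $\bsigma$ about the core together with the exponentially contracting projection $\zeta$. Once that is in hand, the rest is the elementary comparison of $1/\cosh r$ with $1$ on the portion of $\sigma$ trapped near $T_R$, plus bookkeeping of the constants forced by $D\ge 2\log 4$ and $R\ge D+\log 4$; the only genuine care needed is that $\zeta$ is well defined despite the tube's twist parameter, and that depth exceeding $D$ really does force $\sigma$ through the torus $T_{R-D/2}$.
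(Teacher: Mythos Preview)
Your proof is correct and takes a genuinely different route from the paper's. The paper works with the cylindrical projections $\fp_{R,R-D/2}$ and $\fp_{R-D/2,R-D}$ of Lemma~\ref{lem:lengthprojectionbound}: it pushes $\bsigma$ inward onto $T_{R-D/2}$ (saving $\len(\a)/2$ on the closing arc), then does a case split on whether the projected images of the two radial strands $\beta_0,\beta_1\subset\sigma$ are already short; if not, it projects once more onto $T_{R-D}$ to recover the $D/4$. By contrast you bypass the intermediate tori entirely: the $z$-projection $\zeta:\cT\to\R/\ell\Z$ identifies $\len(\g)$ exactly as $|n|\ell$ and bounds it by $\int_{\bsigma}\frac{ds}{\cosh r}$ in one stroke, after which the savings come from a pointwise comparison of $1/\cosh r$ with $1$ on the portion of $\sigma$ trapped at radius $\ge R-D/2$. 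Your argument is shorter, avoids the case analysis, and even yields the sharper constant $7D/16$ in place of $D/4$ (and you could double it to $7D/8$ by also using the exit strand of $\sigma$ through $T_{R-D/2}$, though this is not needed). The paper's projection approach, on the other hand, stays closer to the toolkit used elsewhere in Section~\ref{sec:bigtubes} and produces an explicit homotopic loop at depth $R-D$ rather than an abstract length bound.
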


    \begin{proof} From Lemma \ref{lem:lengthprojectionbound}, we get for a Euclidean arc $\alpha$ distance $R$ from the core, that if we project inwards to the torus at radius $R-D/2$, then if $R>2\log4 + D$ and $D>2\log 4$, we have $$\len_{R-D/2}(\fp_{R,R-D/2}(\alpha))\leq 1/2\len_{R}(\alpha).$$

        Let $\bsigma_1$ be the curve obtained by projecting the portion of $\bsigma$ in $\cT_{R,R-D/2}$ onto $T_{R-D/2}$, then appending the remaining portion of $\sigma$ contained in $\cT_{R-D/2,R-D}$. Because the projection fixes $T_{D/2}$, this gives a loop in the same homotopy class as $\bsigma$. Because these projections are $1$-Lipschitz, this can only decrease the length.

        Because $\sigma$ has depth greater than $D/2$, contained in $\sigma\cap\cT_{R,R-D/2}$ are a pair of distinct arcs $\beta_0$ and $\beta_1$, one running from $T_{R}$ to $T_{R-D/2}$ and the other running from $T_{R-D/2}$ to $T_R$, the total length of these arcs is at least $D$. Let $\beta = \beta_0\cup\beta_1$. The projection $\fp_{R,R-D/2}(\beta)$ is a sub-multicurve of $\bsigma_1$. If $\len_{R-D/2}(\fp_{R,R-D/2}(\beta))\leq D/2$, then we have saved $D/2$ in length.

        The projection $\fp_{R,R-D}$ factors as a composition of the projections $\fp_{R,R-D/2}$ and $\fp_{R-D/2,R-D}$. If we have separately saved length $\len(\alpha)/2$ and $D/2$ in the first projection, we can then estimate
        $$\len_{{R-D/2}}(\bsigma_1)+\len_{R}(\alpha)/2 + D/2 \leq \len(\bsigma),$$ which implies the lemma.

        We therefore assume  that $\len_{{R-D/2}}(\fp_{R,R-D/2}(\beta))\geq D/2$.

        In this case, we can find subarcs of $\bsigma_1$ with union $\beta'$ such that $\len_{R-D/2}(\beta')\geq D/2$. We project $\bsigma_1$ to $T_{R-D}$.  The image of $\beta'$ under the projection $\fp_{R-D/2,R-D}$ has length we can estimate as above using Lemma \ref{lem:lengthprojectionbound}:

        $$\len_{R-D}(\fp_{R-D/2,R-D}(\beta'))\leq (e^{D-R} + e^{-D/2})\len_{{R-D/2}}(\beta').$$ If $D\geq2\log4$ and $R\geq D+\log 4$, then $e^{D-R} + e^{-D/2}\leq 1/2$, so we have decreased the length of
        $\beta'$ by at least a factor of 2, and therefore save length at least $D/4$.
        Let $\bsigma_2$ be the loop obtained by appending to $\fp_{0,D}(\bsigma_1)$ the portion of $\bsigma$ contained in $\cT_{D,d}$.
        Again, the projection $\fp_{R,R-D}$ factors as a composition of the projections $\fp_{R,R-D/2}$ and $\fp_{R-D/2,R-D}$. We have saved separately length $\len(\alpha)/2$ and $D/4$, so in total we can estimate
        $$\len_D(\bsigma_2)+\len_{R}(\alpha)/2 + D/4\leq \len(\bsigma),$$ which implies the lemma.
    \end{proof}

    This immediately gives the following useful lemma.

    \begin{lem}\label{lem:tubearcdepth}
        Let $D>2\log 4$.
        Let $M$ be a closed hyperbolic 3-manifold containing a tube $\cT$ whose boundary $\d\cT$ satisfies $8\diam(\d\cT)\leq D$ and which has tube radius $\radius(\cT)>D+\log 4$.
        Let $\sigma$ be an arc in $\cT$ from $\d\cT$ to itself of depth $\delta_\cT(\sigma)>D$. Let $\bsigma$ be the loop obtained by attaching to $\sigma$ the Euclidean geodesic $\a$ in $\d \cT$ between its endpoints. Then the geodesic loop $\gamma$ homotopic to $\bsigma$ has length satisfying $$\len(\g) + 2\diam(\d\cT) < \len(\bar\sigma).$$
    \end{lem}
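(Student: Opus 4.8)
The plan is to deduce this directly from the preceding lemma; the word ``immediately'' is apt, since the only content is a conversion of constants. First I would check that the hypotheses of the preceding lemma hold for $\sigma$ and $\cT$: here $D>2\log 4$ gives $D\ge 2\log 4$, the assumption $\radius(\cT)>D+\log 4$ is exactly the required lower bound on the tube radius (with $\d\cT=T_{\radius(\cT)}$ playing the role of $T_R$), and $\delta_\cT(\sigma)>D$ is the required depth hypothesis. Moreover the loop $\bsigma$ obtained here by attaching the Euclidean geodesic $\a\subset\d\cT$ between the endpoints of $\sigma$ is the same loop as in that lemma, so it applies and yields
\[
\len(\g)+\tfrac{D}{4}+\tfrac12\len(\a)\le\len(\bsigma).
\]

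Next I would feed in the diameter hypothesis: $8\diam(\d\cT)\le D$ is equivalent to $2\diam(\d\cT)\le D/4$, so the slack $D/4$ above already dominates $2\diam(\d\cT)$. Since $\a$ is the Euclidean geodesic in the flat torus $\d\cT$ between the two (distinct) endpoints of $\sigma$, we have $\len(\a)>0$; hence
\[
\len(\g)+2\diam(\d\cT)\;\le\;\len(\g)+\tfrac{D}{4}\;<\;\len(\g)+\tfrac{D}{4}+\tfrac12\len(\a)\;\le\;\len(\bsigma),
\]
which is the asserted strict bound.

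There is essentially no obstacle here: the substantive estimate is carried entirely by the preceding lemma, and the present statement merely repackages its conclusion in terms of $\diam(\d\cT)$ rather than $D$, which is the form used in later sections. The only point meriting a sentence is the strictness of the last inequality, which I have extracted from $\len(\a)>0$; if one insists on allowing the degenerate configuration in which the two endpoints of $\sigma$ coincide (so $\len(\a)=0$ and $\bsigma=\sigma$), one can note instead that $\sigma$ then meets $\d\cT$ and so is not the core geodesic of $\cT$ — the unique closed geodesic contained in $\overline{\cT}$ — hence is not a closed geodesic at all, so $\len(\g)<\len(\sigma)$, and one recovers strictness either from this observation or from the fact that the chain of $1$-Lipschitz projection bounds used to prove the preceding lemma is never tight.
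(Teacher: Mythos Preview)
Your proposal is correct and matches the paper's approach exactly: the paper gives no separate proof, only the remark that the preceding lemma ``immediately gives'' this one, and your argument is precisely that deduction, together with a careful handling of the strictness that the paper leaves implicit. Your observation that the strict hypotheses $D>2\log 4$ and $\radius(\cT)>D+\log 4$ make the projection factor strictly below $1/2$ (so the $D/4$ savings is strict even when $\len(\a)=0$) is the right way to close the degenerate case.
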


    Next we show that one can replace closed geodesics traveling sufficiently far into tubes with homologous shorter multicurves. This is the main result from this section that we require for our theorems. This shortening process increases the value of the functional $K=K_{\rho,\mu}$ from Section \ref{sec:stablenorm}, so cannot be done to loops realizing the best Lipschitz constant; in particular, closed leaves in cohomology stretch laminations do not travel far into Margulis tubes. For loops approximating non-closed leaves of the best Lipschitz lamination, it still proves useful.

    \begin{figure}[h]
        \centering
        \includegraphics[scale=.15]{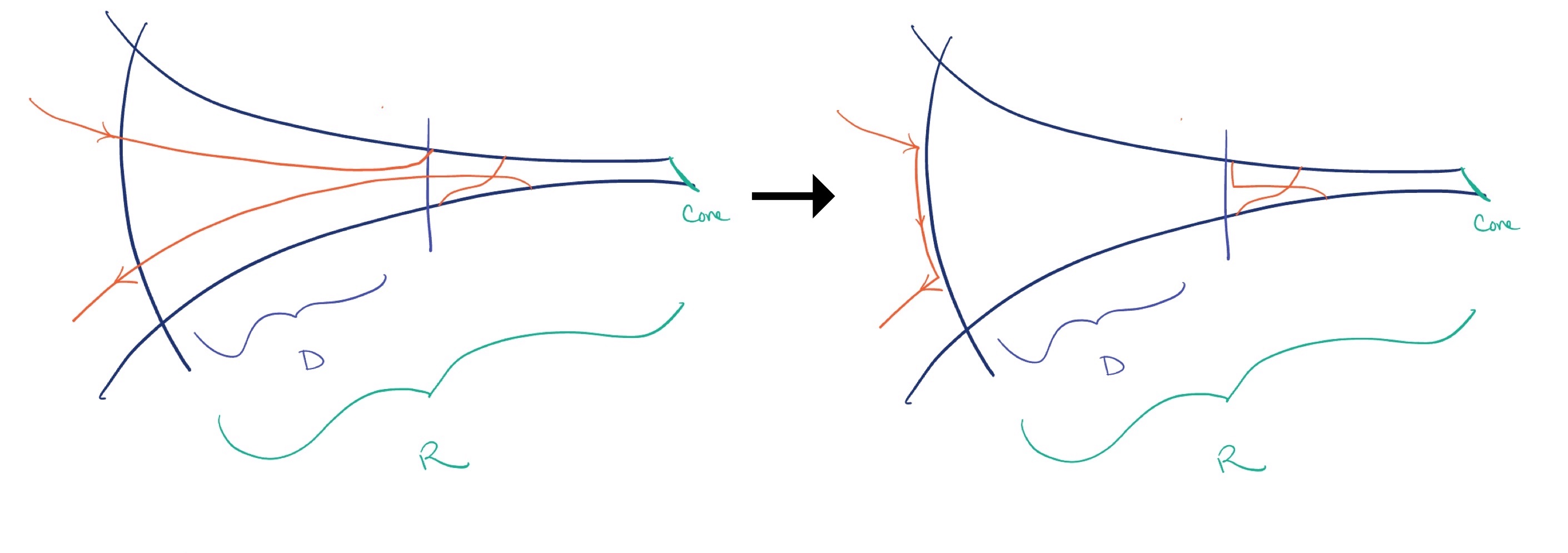}\caption{Cartoon of curve entering tube and the shorter multicurve obtained in Lemma \ref{lem:shortmulticurve}.}
    \end{figure}

    \begin{lem}\label{lem:shortmulticurve}

        Let $M$ be a closed hyperbolic 3-manifold and $\cT$ a tube in $M$. Let $D \geq 8\diam(\d \cT)+2\log4$ and assume the tube has radius $R>D+\log 4$. Let $\g$ be a closed curve in $M$ with $\delta_\cT(\g)>D$. Then there is a multicurve $g$ homologous to $\g$ and with $\len(g)<\len(\g)$ such that every component of the geodesic tightening of $g$ has tube depth less than $D$ or is contained in $\cT$. Moreover, one can assume $g\cap (M-\cT) = \g\cap (M-\cT)$.
    \end{lem}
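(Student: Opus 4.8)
\emph{Proof plan.}
First I would perturb $\g$ so that it meets $\d\cT$ and every torus $T_s\subset\cT$ transversally; then $\g\cap\cT$ is a finite disjoint union of compact arcs with endpoints on $\d\cT$ (a closed component of $\g\cap\cT$ would be freely homotopic in $\cT$ to a power of the core $c_0=\core(\cT)$ and can be dealt with separately, so I assume $\g\not\subset\cT$ and that all components of $\g\cap\cT$ are such arcs). Call a component $\sigma$ \emph{deep} if $\delta_\cT(\sigma)>D$; since $\delta_\cT(\g)>D$ there is at least one deep arc. For each deep arc $\sigma_i$ let $\a_i$ be a shortest Euclidean geodesic of the flat torus $\d\cT$ joining the endpoints of $\sigma_i$, so $\len(\a_i)\le\diam(\d\cT)$, and set $\overline{\sigma_i}=\sigma_i\cdot\overline{\a_i}$. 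This loop lies in $\cT$, hence, since $\pi_1\cT\cong\Z$ is generated by the class of $c_0$, it is freely homotopic in $M$ to $c_0^{\,w_i}$ for a unique $w_i\in\Z$; in particular $[\overline{\sigma_i}]=w_i[c_0]$ in $H_1(M;\Z)$, and the geodesic representative of $\overline{\sigma_i}$ has length $|w_i|\len(c_0)$.

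Next I would define
$$g\ :=\ \bigl(\g\text{ with each deep arc }\sigma_i\text{ replaced by }\a_i\bigr)\ \sqcup\ w\,c_0,\qquad w:=\sum_{\sigma_i\text{ deep}}w_i ,$$
where $w\,c_0$ denotes $|w|$ copies of the closed geodesic $c_0$ oriented according to the sign of $w$. Only the interior of $\cT$ has been altered, so $g\cap(M-\cT)=\g\cap(M-\cT)$. For the homology class, $\g-g=\sum_i(\sigma_i-\a_i)-w\,c_0=\sum_i\overline{\sigma_i}-w\,c_0$, whose class is $\sum_i w_i[c_0]-w[c_0]=0$; hence $g$ is homologous to $\g$. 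For the length, the hypotheses $8\diam(\d\cT)\le D$ and $\radius(\cT)>D+\log4$ are precisely those of Lemma~\ref{lem:tubearcdepth}, which applied to a deep arc $\sigma_i$ gives $|w_i|\len(c_0)<\len(\overline{\sigma_i})-2\diam(\d\cT)=\len(\sigma_i)+\len(\a_i)-2\diam(\d\cT)\le\len(\sigma_i)-\len(\a_i)$, using $\len(\a_i)\le\diam(\d\cT)$. Summing over the deep arcs, $|w|\len(c_0)\le\sum_i|w_i|\len(c_0)<\sum_i(\len(\sigma_i)-\len(\a_i))$, and the right-hand side is exactly $\len(\g)$ minus the length of the capped curve; adding that length back to both sides yields $\len(g)<\len(\g)$, strict since at least one deep arc occurs.

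It remains to control the geodesic tightening of $g$. The copies of $c_0$ are already geodesics contained in $\cT$, so they satisfy the conclusion. For the capped multicurve $g_0$ (which has $\delta_\cT(g_0)<D$) I would argue as follows: if a component of its tightening were a closed geodesic $\delta^\ast$ with $\delta_\cT(\delta^\ast)>D$ and $\delta^\ast\neq c_0^{\pm n}$, then $\delta^\ast$ itself satisfies the hypotheses of the lemma, and rerunning the construction above with $\delta^\ast$ in place of $\g$ produces a strictly shorter multicurve, homologous to $\delta^\ast$, built from a curve of depth $<D$ together with copies of $c_0$; replacing the corresponding component of $g$ by this multicurve strictly decreases $\len(g)$ without changing its homology class. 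Because the length spectrum of $M$ is discrete and every curve produced has length at most $\len(\g)$, only finitely many such replacements can occur, so after finitely many steps one reaches a multicurve, still homologous to $\g$ and of length $<\len(\g)$, whose geodesic tightening has every component of depth $<D$ or contained in $\cT$.

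The one-step construction of the second paragraph already gives a multicurve that is shorter than $\g$, homologous to $\g$, and equal to $\g$ outside $\cT$; the iteration of the third paragraph gives one with controlled geodesic tightening. \textbf{The main obstacle} is producing a single $g$ with all of these properties at once: each iteration step passes through a geodesic tightening, which changes the curve outside $\cT$, so the bookkeeping needed to keep the clause $g\cap(M-\cT)=\g\cap(M-\cT)$ throughout — or, alternatively, a direct estimate (via the cylindrical projection bound of Lemma~\ref{lem:lengthprojectionbound} and the fact that $g_0$ is geodesic away from the short caps near $\d\cT$) showing that the tightening of $g_0$ never deepens past $D$ in the first place — is the delicate point of the argument.
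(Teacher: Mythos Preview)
Your construction in the first two paragraphs is correct: it yields a multicurve homologous to $\g$, strictly shorter, and equal to $\g$ outside $\cT$. The gap you flag in the final paragraph is genuine, and neither of your proposed fixes quite works. The iteration destroys the clause $g\cap(M-\cT)=\g\cap(M-\cT)$, as you note; and a direct bound on the depth of the tightening of $g_0$ is not available in your setup, because $g_0$ still contains the \emph{shallow} arcs of $\g\cap\cT$ (those of depth $\le D$), and once you tighten the whole loop these can in principle be dragged past depth $D$.

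The paper sidesteps this by decomposing more aggressively. Instead of only excising the deep arcs and keeping $\g$ connected, it cuts $\g$ along $\d\cT$ entirely and caps off \emph{every} interior strand $\sigma_i$ and \emph{every} exterior strand $\eta_i$ separately by Euclidean arcs on $\d\cT$, pairing endpoints of opposite orientation so that the added caps cancel homologically. The resulting multicurve $\sum\overline\eta_i+\sum\overline\sigma_i$ is homologous to $\g$, agrees with $\g$ on $M-\cT$ (only boundary arcs were inserted, and only the interior loops are subsequently tightened), and the length deficit again comes from Lemma~\ref{lem:tubearcdepth}. The payoff is in the tightening step: each exterior loop $\overline\eta_i$ now has a representative lying entirely in $\overline{M-\cT}$, i.e.\ at depth zero. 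Its geodesic tightening therefore cannot have depth exceeding $\diam(\d\cT)<D$ unless it is homotopic to a power of the core, since a deeper excursion into $\cT$ could be radially projected to $\d\cT$, contradicting length minimality. No iteration is required.

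So the difference is structural: you keep $\g$ connected modulo copies of the core, which makes the homology and ``moreover'' clauses transparent but leaves residual interior arcs in the capped curve; the paper breaks $\g$ completely into inside loops and outside loops, which makes the tightening bound a one-line consequence of the fact that the outside loops have depth-zero representatives.
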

    \begin{proof}
        Cut $\g$ along $\d\cT$. Because $\d\cT$ is nullhomologous, there are an even number, say $2n$, of intersections which come in oppositely oriented pairs. The pairs of intersections come from strands $\sigma_i$ contained in $\cT$ as well as from complimentary strands $\eta_i$ contained in $M-\cT$. Create closed curves $\bsigma_i$ and $\overline\eta_i$ by attaching Euclidean geodesics in $\d \cT$ between the endpoints of the strands, matching interior strands with oppositely oriented interior strands and likewise exterior strands with oppositely oriented exterior strands, to get a multicurve each component of which is contained entirely in either the thick or thin part of $M$. This process adds in total no more length than $2n\diam \d\cT$. By Lemma \ref{lem:tubearcdepth}, the curves $\bsigma_i$ inside the tubes can be geodesically tightened in $\cT$, reducing their length by strictly more than $2\diam(\d\cT)$, so in total we save strictly more length than $2n\diam\d\cT$; we now denote by $\bsigma_i$ these tightened curves. It follows that the multicurve $\sum\overline\eta_i+\sum\bsigma_i$ has length strictly less than $\g$ and that in $M-\cT$, the multicurve restricts to a union of subarcs of $\g$, because we only needed to tighten the components in the tube to get the needed length savings.

        The procedure used to produce the multicurve does not change the homology class.
        When one geodesically tightens the curves $\overline\eta_i$, because $D>\diam\d\cT$, none of the curves travel further than depth $\diam(\d\cT)$ into the tube unless it is homotopic to a multiple of the core, as that would contradict length minimality.
    \end{proof}

    \section{Stretch laminations near deep tubes}\label{sec:stretchtubes}

    We now apply results from the previous section to stretch laminations in particular. We begin with a simple application to closed leaves in stretch laminations.

    \begin{prop}\label{prop:thinclosedstretch} Let $M$ be a closed hyperbolic 3-manifold. Let $\mu$ be a $(D,\log 4)$-deep Margulis constant for $M$ such that every maximal tube intersecting the $\mu$-thin part of $M$ has boundary torus with diameter less than $D/8-1/4\log 4$. Then if $f:M\to \R/\Z$ is a nontrivial best Lipschitz map and $\lambda\subset\stretch(f)$ is a closed geodesic, then $\lambda$ is either contained $M_{\leq\mu}$ or $M_{\geq \mu}$. In particular, $\lambda$ does not travel from the thick part into the thin part.
    \end{prop}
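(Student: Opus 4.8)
Suppose, for contradiction, that $\lambda$ is contained in neither $M_{\leq\mu}$ nor $M_{\geq\mu}$. Since $M$ is closed, $M_{\leq\mu}$ is a disjoint union of solid-torus tubes, so $\lambda$ enters the interior of one of them; this thin tube sits inside a maximal tube $\cT$ around a core geodesic $c$, and $\lambda$ also meets $M_{\geq\mu}$. The first step is to extract a tube-depth bound from the deepness hypothesis: because $\mu$ is $(D,\log 4)$-deep, the torus $T^\mu=\d M_{\geq\mu}\cap\cT$ lies at distance at least $D$ from $\d\cT$ and at distance at least $\log 4$ from $c$, so $\radius(\cT)\geq D+\log 4$; and because $\lambda$ reaches into the interior of the thin part, hence strictly past $T^\mu$, the innermost Euclidean torus of $\cT$ that $\lambda$ meets has radius strictly smaller than that of $T^\mu$, whence $\delta_\cT(\lambda)>D$.

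The geometric engine is Lemma \ref{lem:shortmulticurve}, which replaces a closed curve of large tube depth by a strictly shorter homologous multicurve. Applied to $\cT$ and $\lambda$ with depth parameter $D':=8\diam(\d\cT)+2\log 4$, it produces a multicurve $g$ homologous to $\lambda$ with $\len(g)<\len(\lambda)$; after discarding any nullhomotopic components (which alters neither the homology class nor the $\rho$-value and only decreases length) we may take $g\in\cG(M)$ with $\rho(g)=\rho(\lambda)$ and $|g|_M\leq\len(g)<\len(\lambda)$. The other ingredient is the structural fact that a closed geodesic contained in the stretch set of a best Lipschitz map is a leaf of the stretch lamination along which $f$ increases at rate exactly $L_f$, so that, orienting $\lambda$ suitably, $\rho(\lambda)=\int_\lambda df=L_f\,\len(\lambda)>0$, using also $L_f=L(\rho)=\norm{\rho}_0$ (Theorem 5.8 of \cite{DaskalopoulosUhlenbeck}) and nontriviality of $f$. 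Combining,
\[
\norm{\rho}_0\;\geq\;\frac{\rho(g)}{|g|_M}\;\geq\;\frac{\rho(\lambda)}{\len(g)}\;>\;\frac{\rho(\lambda)}{\len(\lambda)}\;=\;L(\rho)\;=\;\norm{\rho}_0,
\]
a contradiction; hence $\lambda\subset M_{\leq\mu}$ or $\lambda\subset M_{\geq\mu}$, and in particular $\lambda$ cannot travel from the thick part into the thin part.

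What remains is the bookkeeping needed to invoke Lemma \ref{lem:shortmulticurve} with the parameter $D'$ above: that lemma requires $D'\geq 8\diam(\d\cT)+2\log 4$, $\radius(\cT)>D'+\log 4$, and $\delta_\cT(\lambda)>D'$. Since $\cT$ meets the $\mu$-thin part, the hypothesis gives $\diam(\d\cT)<D/8-\tfrac14\log 4$, so $D'=8\diam(\d\cT)+2\log 4<D$; then $D'<D\leq\radius(\cT)-\log 4$ yields $\radius(\cT)>D'+\log 4$, and $\delta_\cT(\lambda)>D>D'$. This is precisely why the deepness parameter $\log 4$ and the diameter bound $D/8-\tfrac14\log 4$ appear in the statement.

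The step I expect to demand the most care is the second ingredient above: justifying that the closed geodesic $\lambda\subset\stretch(f)$ genuinely realizes the stable norm, i.e. that it is a leaf of the chain recurrent stretch lamination along which $f$ increases at rate $L_f$, rather than some incidental closed geodesic sitting in $\stretch(f)$. This is where one must lean on the structural description of best Lipschitz maps and their stretch sets from \cite{GueritaudKassel} and \cite{FathiSiconolfi}; everything else is the shortening provided by Lemma \ref{lem:shortmulticurve} together with the constant bookkeeping just described.
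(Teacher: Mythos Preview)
Your proof is correct and follows essentially the same approach as the paper: both argue by contradiction, observe that $\lambda$ has tube depth exceeding $D$, apply Lemma~\ref{lem:shortmulticurve} to produce a strictly shorter homologous multicurve, and derive a contradiction with $K(\lambda)=\norm{\rho}_0$. Your bookkeeping with the auxiliary parameter $D'=8\diam(\d\cT)+2\log 4<D$ is in fact more careful than the paper's (it cleanly gives the strict inequality $\radius(\cT)>D'+\log 4$), and the concern you flag at the end---that a closed geodesic in $\stretch(f)$ genuinely realizes $\norm{\rho}_0$---is exactly the step the paper also asserts without further comment, relying on the structural results of \cite{GueritaudKassel} and \cite{FathiSiconolfi}.
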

    \begin{proof} Let $K(\g)=\rho(\g)/|\g|_M$, where $\rho$ is the cohomology class associated to $f$.  Because $\lambda\subset\stretch(f)$ and $\rho$ is nontrivial, we have $K(\g)= \rho(\lambda)/|\lambda|_M = ||\rho||_0>0$. Because $\lambda$ travels from the thick part to the thin part, it must have tube depth at least $D$. The tube satisfies the conditions in Lemma \ref{lem:shortmulticurve}, so there is a multicurve $g$ with $\norm{\rho}_0\geq K(g)>K(\lambda)=\norm{\rho}_0$, a contradiction.
    \end{proof}

     Next we show a condition on the stable norm forces core curves in Margulis tubes to be part of the cohomology stretch lamination.

    \begin{prop}\label{prop:lengthbound}        Let $M$ be a closed hyperbolic 3-manifold and let $\rho\in H^1(M)$ be a nontrivial cohomology class with associated cohomology stretch lamination $\Lambda(\rho)$. Let $\mu$ be a $(D,\log 4)$-deep Margulis constant for $M$.  For the collection of maximal tubes $\{\cT\}$ in $M$ intersecting the $\mu$-thin part of $M$, assume $D \geq 8\diam(\d \cT)+2\log4$.
        Then either $\norm{\rho}_0=\norm{\rho}_\mu$ or the stretch lamination contains some nonempty union of core curves of Margulis tubes in the $\mu$-thin part.

    \end{prop}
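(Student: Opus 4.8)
The plan is to establish the dichotomy by reducing the computation of the stable norm $\norm{\rho}_0$ to a restricted class of geodesics, using Lemma~\ref{lem:shortmulticurve} to discard leaves that wander deep into tubes. Recall that $\norm{\rho}_0=\sup_{g\in\cG(M)}K(g)$; since $K$ of a chain is at most the maximal $|K|$ of its components, and $K(g)=\rho(\g)/\len(\g)$ for $\g$ the geodesic representative of $g$, one has $\norm{\rho}_0=\sup\{\rho(c)/\len(c):c\text{ an oriented closed geodesic}\}$ (cf.\ \cite{DaskalopoulosUhlenbeck}). Call a closed geodesic \emph{shallow} if its tube depth is less than $D$ in every maximal tube meeting $M_{\leq\mu}$. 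Since $\mu$ is $(D,\log 4)$-deep, each torus $T_i^\mu$ lies at distance at least $D$ from $\partial\cT_i$, so a shallow geodesic misses every $T_i^\mu$ and hence is contained in $M_{\geq\mu}$.

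The crux is the claim that $\norm{\rho}_0=\sup\{\rho(\delta)/\len(\delta):\delta\text{ shallow, or a core curve of a tube in }M_{\leq\mu}\}$. The inequality $\le$ is trivial. For $\ge$, take an oriented closed geodesic $\sigma$ with $\rho(\sigma)>0$ that is neither shallow nor contained in a tube; then $\sigma$ has depth $\ge D$ in some maximal tube $\cT$ meeting $M_{\leq\mu}$, and such a tube satisfies the hypotheses of Lemma~\ref{lem:shortmulticurve}, since the diameter bound is the hypothesis of the proposition and $\radius(\cT)\ge D+\log 4$ by $(D,\log 4)$-deepness. Applying Lemma~\ref{lem:shortmulticurve} to $\sigma$ and $\cT$, then geodesically tightening and repeating on any non-core component still of depth $\ge D$ in some such tube, produces after finitely many steps a multicurve $g$ homologous to $\sigma$ with $\len(g)<\len(\sigma)$ whose geodesic tightening has every component shallow or a core curve; the iteration terminates because each application of Lemma~\ref{lem:shortmulticurve} strictly decreases total length by a definite amount (the length saved in tightening a depth-$>D$ arc dominates the cost of reconnecting along $\partial\cT$, using $8\diam(\partial\cT)\le D$), and tightenings only decrease length. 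Since $\rho(g)=\rho(\sigma)$ and $\sigma$ is a geodesic, $K(g)\ge\rho(\sigma)/\len(g)>\rho(\sigma)/\len(\sigma)=K(\sigma)$; passing to the component of the tightening with the largest ratio $\rho(\cdot)/\len(\cdot)$ gives a shallow-or-core closed geodesic $\delta$ with $\rho(\delta)/\len(\delta)\ge K(g)>K(\sigma)$, proving the claim.

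Now choose closed geodesics $\delta_k$, each shallow or a core curve, with $\rho(\delta_k)/\len(\delta_k)\to\norm{\rho}_0$, which is positive since $\rho\ne 0$. There are finitely many tubes in $M_{\leq\mu}$, hence finitely many core curves. If infinitely many $\delta_k$ equal a fixed core curve $c$, then $\rho(c)/\len(c)=\norm{\rho}_0=L(\rho)$; for any best Lipschitz representative $f$ of $\rho$ (e.g.\ a $C^1$ one, or using a.e.\ differentiability in general), $\rho(c)=\int_c df\le\int_c L_f(x)\,|dx|\le L_f\len(c)=L(\rho)\len(c)=\rho(c)$, so equality holds throughout and $L_f\equiv L_f$ along $c$ by upper semicontinuity of $L_f$; thus $c\subset\stretch(f)$ for every best Lipschitz $f$, i.e.\ $c\subset\Lambda_0(\rho)$. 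Being a closed geodesic inside a geodesic lamination, $c$ is a closed leaf, hence a chain recurrent sublamination of $\Lambda_0(\rho)$, so $c\subset\Lambda(\rho)$: the second alternative holds. Otherwise all large $\delta_k$ are shallow, hence contained in $M_{\geq\mu}$, so $\delta_k\in\cG(M_{\geq\mu})$ with $|\delta_k|_{M_{\geq\mu}}\le\len(\delta_k)=|\delta_k|_M$; then $\norm{\rho}_\mu\ge\rho(\delta_k)/|\delta_k|_{M_{\geq\mu}}\ge\rho(\delta_k)/\len(\delta_k)\to\norm{\rho}_0$. As $\norm{\rho}_\mu\le\norm{\rho}_0$ always (a class in $\cG(M_{\geq\mu})$ essential in $M$ has $|\cdot|_M\le|\cdot|_{M_{\geq\mu}}$, and inessential ones contribute nonpositively), we conclude $\norm{\rho}_\mu=\norm{\rho}_0$: the first alternative.

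The main obstacle I anticipate is the iteration in the claim: one must check that every tube a closed geodesic can enter to depth $\ge D$ is among the tubes constituting $M_{\leq\mu}$ (so that Lemma~\ref{lem:shortmulticurve} genuinely applies) — for which a uniform upper bound on the radii of maximal tubes around geodesics lying in $M_{\geq\mu}$ is convenient — and that the length savings do not degenerate, so tightening cannot keep reintroducing deep arcs indefinitely. A smaller point is the implication $\rho(c)/\len(c)=\norm{\rho}_0\Rightarrow c\subset\Lambda_0(\rho)$, which relies on upper semicontinuity of the local Lipschitz constant together with Rademacher's theorem.
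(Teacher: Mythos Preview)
Your argument is essentially correct and reaches the same dichotomy, but it is organized differently from the paper's proof. The paper starts from a point $p\in\Lambda(\rho)$ and uses chain recurrence together with a $C^1$ best Lipschitz representative to produce closed curves $\gamma_k$ with $K(\gamma_k)\to\norm{\rho}_0$; it then applies Lemma~\ref{lem:shortmulticurve} once for each deep tube to split $\gamma_k$ into a homologous multicurve whose maximal-$K$ component lies (after tightening) entirely in the thick or thin part. You instead bypass the lamination entirely at the start, working directly with the supremum $\norm{\rho}_0=\sup_c\rho(c)/\len(c)$ and reducing to ``shallow-or-core'' geodesics; you then invoke the best Lipschitz machinery only at the end, to show that a core $c$ with $\rho(c)/\len(c)=\norm{\rho}_0$ lies in $\Lambda(\rho)$. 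Your route is arguably cleaner for this proposition, and your explicit justification that such a $c$ lies in $\Lambda_0(\rho)$ (via $\int_c df$, upper semicontinuity of $L_f$, and Rademacher) fills in a step the paper leaves implicit. The paper's route, on the other hand, sets up the chain-recurrence/approximation framework that is genuinely needed in the harder Theorem~\ref{thm:onlycores}, where one must exploit that the approximating curves shadow geodesic segments in the thick part.

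The one point that deserves tightening is the iteration. You apply Lemma~\ref{lem:shortmulticurve} for a single tube, tighten, and repeat; termination is not obvious because tightening an exterior component can push it deep into another tube, and the strict length savings in Lemma~\ref{lem:shortmulticurve} is not uniformly quantified. The fix is to avoid iteration altogether: apply the cut-and-reconnect procedure of Lemma~\ref{lem:shortmulticurve} sequentially for every maximal tube $\cT_m$ meeting $M_{\leq\mu}$ that $\sigma$ enters to depth $\ge D$, using the ``moreover'' clause so that the running multicurve agrees with $\sigma$ outside the tubes already processed. After one pass the non-core components lie, \emph{untightened}, in $M\setminus\bigcup_m\interior(\cT_m)\subset M_{\geq\mu}$; you can then feed them directly into $\norm{\rho}_\mu$ via $|\cdot|_{M_{\geq\mu}}\le\len(\cdot)$ without ever tightening them in $M$. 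This removes both of the obstacles you flagged (termination, and worrying about maximal tubes around geodesics already in $M_{\geq\mu}$).
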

    \begin{proof}

        Let $f$ be a $C^1$ best Lipschitz map with stretch set a geodesic lamination and let $\omega=df$ be the corresponding 1-form; such a map exists by Theorem \ref{thm:maps} and, as noted in Proposition \ref{prop:orientable}, orients $\Lambda(\rho)\subset \stretch(f)$.

        Let $p\in\Lambda(\rho)$; then by chain recurrence, there is a sequence $\gamma_k$ of closed curves locally $C^1$-converging to oriented segments of $\Lambda(\rho)$ which go through $p$. 
        We orient these curves so that $\omega(\dot\gamma_k(x_k))$ converges pointwise to the best Lipschitz constant $||\rho||_0$ for any sequence $x_k\in\g_k$. Because $\Lambda(\rho)$ is oriented by the continuous form $\omega$, this can be done even when the leaf of $\Lambda(\rho)$ that $\g_k$ is following changes. This then implies $K(\g_k)\to \norm{\rho}_0$.

        By applying Lemma \ref{lem:shortmulticurve} for each tube into which $\g_k$ travels distance at least $D$ in $\cT_m$, we can replace each $\gamma_k$ with a geodesic multicurve $g_k$ homologous to $\gamma_k$, with $\len(g_k)\leq \len(\gamma_k)$, and such that each component of the geodesic tightening is contained in either the thick part or thin part. The multicurve $g_k$ satisfies $K(g_k)\geq K(\gamma_k)$ because $\len(g_k)\leq\len(\gamma_k)$ and $\rho(g_k)=\rho(\gamma_k)$.
        This implies that either the stable norm is realized in the thick part, or a Dehn filling core is part of the stretch lamination. To see this, choose the component $g_k^{\max}\subset g_k$ maximizing $K(g^{i}_k)$ among all components $g_k^i$ of $g$. By construction, the geodesic tightening of $g_k^{\max}$ is contained entirely in the thick or thin part and the sequence of such loops satisfies $K(g_k)\leq K(g_k^{\max})$, so $K(g_k^{\max})\to\norm{\rho}_0$. If the tightening of $g_k^{\max}$ is contained in the thin part for infinitely many $k$, then the core curve of a Margulis tube is part of the cohomology stretch lamination as every nontrivial closed curve in the thin part of a tube is homotopic to a power of the core. If it is not contained in the thin part for infinitely many $k$, then the stable norm is realized in the thick part. In particular, as soon as the thick part does \emph{not} realize the stable norm, we know some core curve of a Maruglis tube in the thin part is contained in the stretch lamination.
    \end{proof}

    Next is our main result about the leaves of stretch laminations near large tubes; this result says a stronger condition on the stable norm on a larger tube implies \emph{every} leaf of the stretch lamination is the core of a Margulis tube. Note that the depth assumption in Theorem \ref{thm:onlycores} differs by a factor of two from the depth assumption in Proposition \ref{prop:lengthbound}.

    \begin{thm}\label{thm:onlycores}        Let $M$ be a closed hyperbolic 3-manifold and let $\rho\in H^1(M)$ be a nontrivial cohomology class with associated cohomology stretch lamination $\Lambda(\rho)$. Let $\mu$ be a $(2D,\log 4)$-deep Margulis constant for $M$.  For the collection of maximal tubes $\{\cT\}$ in $M$, where each tube $\cT$ intersects the $\mu$-thin part of $M$, assume $D \geq 8\diam(\d \cT)+2\log4$ for each tube in the collection. If$~||\rho||_0> 3\norm{\rho}_\mu$, then every leaf of $\Lambda(\rho)$ is a core curve of a Margulis tube in the $\mu$-thin part.
    \end{thm}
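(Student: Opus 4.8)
The plan is to argue by contradiction. Suppose some leaf $\lambda$ of $\Lambda(\rho)$ is \emph{not} a core curve of a tube in $M_{\leq\mu}$. Since the $\mu$-thin part is a disjoint union of Margulis tubes and the only complete geodesic contained in a Margulis tube is its core, $\lambda$ meets the open thick part $M_{\geq\mu}$. Fix a $C^1$ best Lipschitz map $f$ representing $\rho$ and put $\omega=df$; as in Proposition~\ref{prop:orientable} this orients $\Lambda(\rho)\subseteq\Lambda_0(\rho)=\stretch(f)$ so that $\omega$ is positively tangent to every leaf, with $|\omega|\equiv\norm{\rho}_0$ on $\Lambda(\rho)$ and $|\omega|\leq\norm{\rho}_0$ everywhere. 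Since $\norm{\rho}_0>3\norm{\rho}_\mu\geq\norm{\rho}_\mu$, Proposition~\ref{prop:lengthbound} (whose hypotheses follow from those of the present theorem, using that a $(2D,\log4)$-deep constant is $(D,\log4)$-deep) already shows $\Lambda(\rho)$ contains \emph{some} core curve; the point is to show it contains \emph{only} core curves. Pass to a minimal sublamination $\lambda'$ of $\overline\lambda\subseteq\Lambda(\rho)$. If $\lambda'$ is a closed geodesic, Proposition~\ref{prop:thinclosedstretch} puts it entirely in $M_{\leq\mu}$ (so $\lambda'$ is a core) or entirely in $M_{\geq\mu}$; in the latter case $\norm{\rho}_0=\rho(\lambda')/\len(\lambda')=\rho(\lambda')/|\lambda'|_{M_{\geq\mu}}\leq\norm{\rho}_\mu$, a contradiction. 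So either $\lambda'$ is an irrational minimal lamination — which cannot lie in a single tube, hence meets $M_{\geq\mu}$ with syndetic returns — or $\lambda'$ is a core on which $\lambda$ accumulates, in which case $\lambda$ is an infinite leaf, still meeting $M_{\geq\mu}$, with excursions of unbounded tube depth into that tube.

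In either remaining case I would choose closed geodesics $\gamma_k$ that $C^1$-converge along their whole length to oriented arcs of $\Lambda(\rho)$ and pass through $M_{\geq\mu}$ — using recurrence of $\lambda'$, respectively chain recurrence of $\Lambda(\rho)$ based at a point of $\lambda\cap M_{\geq\mu}$ — oriented so that $\omega(\dot\gamma_k)\to\norm{\rho}_0$ uniformly, whence $K(\gamma_k):=\rho(\gamma_k)/\len(\gamma_k)\to\norm{\rho}_0$. If some $\gamma_k$ were contained in $M_{\geq\mu}$ it would realize its length there and satisfy $K(\gamma_k)=\rho(\gamma_k)/|\gamma_k|_{M_{\geq\mu}}\leq\norm{\rho}_\mu$; so for large $k$ the curve $\gamma_k$ enters $M_{\leq\mu}$, and because $\mu$ is $(2D,\log4)$-deep, each component of $\gamma_k\cap\cT$ (for a maximal tube $\cT$) that reaches $M_{\leq\mu}$ travels radial distance at least $2D$ into $\cT$ and back, hence has tube depth at least $2D$ and length at least $4D$. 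This is the single place the factor-two improvement of the deepness over Proposition~\ref{prop:lengthbound} enters.

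For the comparison I would write $\gamma_k$ as a cyclic concatenation of ``deep'' arcs $\sigma_1,\dots,\sigma_m$ (each $\sigma_j\subset\cT_{i_j}$, with endpoints on $\d\cT_{i_j}$ and $\len(\sigma_j)\geq4D$) and complementary arcs $\eta_j\subset M_{\geq\mu}$, sweeping any shallow pokes of $\gamma_k$ into the thick collars into the $\eta_j$. Closing the $\eta_j$ up with arcs on the tori $\d\cT_{i_j}$, each of length $\leq\diam\d\cT_{i_j}\leq D/8$, yields a single loop $\hat\gamma_k\subset M_{\geq\mu}$; closing each $\sigma_j$ up on $\d\cT_{i_j}$ yields a multiple $n_j[\core(\cT_{i_j})]$; and additivity of $\omega$ gives $\rho(\gamma_k)=\rho(\hat\gamma_k)+\sum_j n_j\rho(\core(\cT_{i_j}))$. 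Now $\rho(\hat\gamma_k)\leq\norm{\rho}_\mu\len(\hat\gamma_k)\leq\norm{\rho}_\mu\big(\sum_j\len(\eta_j)+mD/8\big)$ since $\hat\gamma_k\subset M_{\geq\mu}$, and $n_j\rho(\core(\cT_{i_j}))\leq\norm{\rho}_0|n_j|\len(\core(\cT_{i_j}))\leq\norm{\rho}_0\big(\len(\sigma_j)+D/8\big)$ since every core curve has $\rho$-to-length ratio at most $\norm{\rho}_0$ and $n_j[\core(\cT_{i_j})]$ is realized by the $|n_j|$-fold cover of the core. Using $mD/8\leq\tfrac1{32}\sum_j\len(\sigma_j)$ and $\norm{\rho}_\mu\leq\norm{\rho}_0$ one gets
$$\rho(\gamma_k)\ \leq\ \norm{\rho}_\mu\sum_j\len(\eta_j)\ +\ \tfrac{17}{16}\,\norm{\rho}_0\sum_j\len(\sigma_j),$$
while $\rho(\gamma_k)=(1-o(1))\norm{\rho}_0\big(\sum_j\len(\eta_j)+\sum_j\len(\sigma_j)\big)$. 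Subtracting the deep contribution, dividing by $\sum_j\len(\eta_j)$, and letting $k\to\infty$ gives $\norm{\rho}_0\leq\norm{\rho}_\mu+\tfrac1{16}\norm{\rho}_0\cdot\big(\sum_j\len(\sigma_j)/\sum_j\len(\eta_j)\big)$; provided the thick part $\sum_j\len(\eta_j)$ stays a definite fraction of $\len(\gamma_k)$, this forces $\norm{\rho}_0\leq3\norm{\rho}_\mu$, contradicting the hypothesis.

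The main obstacle is precisely that proviso: ruling out that the $\gamma_k$ are ``almost entirely'' core multiples, i.e.\ that $\sum_j\len(\eta_j)$ is negligible next to $\len(\gamma_k)$. When $\lambda'$ is irrational this follows from its syndetic returns to $M_{\geq\mu}$, using Proposition~\ref{prop:thinclosedstretch} to know $\lambda'$ cannot accumulate on a core. The genuinely delicate case is $\lambda$ accumulating on a core $\core(\cT)$: one must show, using chain recurrence at a point of $\lambda\cap M_{\geq\mu}$ (which forces the deep dips of $\lambda$ into $\cT$ to both enter and leave rather than merely spiral in) together with the tightening Lemma~\ref{lem:shortmulticurve}, that the approximating loops do not spend an overwhelming fraction of their length near $\core(\cT)$; here too the $2D$-buffer makes every genuine deep excursion long compared with all the closing-up errors, so that the bookkeeping closes and the constant $3$ survives in the limit.
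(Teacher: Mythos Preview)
Your argument has a genuine gap, and you correctly identify it yourself: your inequality
\[
(1-o(1))\,\norm{\rho}_0 \ \leq\ \norm{\rho}_\mu + \tfrac{1}{16}\,\norm{\rho}_0\cdot R, \qquad R = \frac{\sum_j\len(\sigma_j)}{\sum_j\len(\eta_j)},
\]
is useless unless $R$ stays bounded, and nothing you say bounds it. In the case where $\lambda$ spirals onto a core, the chain-recurrent approximants $\gamma_k$ may spend an arbitrarily large fraction of their length winding around that core, so $R\to\infty$ is not excluded. Your final paragraph gestures at chain recurrence and the $2D$-buffer, but neither helps: the buffer only forces each deep arc $\sigma_j$ to be long, which makes $R$ \emph{worse}, and chain recurrence based at a thick point gives no lower bound on the thick fraction of the approximants. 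The difficulty is structural: you are upper-bounding both the thick and the deep contributions to $\rho(\gamma_k)$ and then subtracting, so the deep part (which can carry almost all of $\rho(\gamma_k)$) swallows the signal.

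The paper avoids this entirely by reversing the direction of the estimate on the thick piece: it bounds $\norm{\rho}_\mu$ from \emph{below} using a single thick-part loop, and never touches the deep pieces. The device that makes this work is a nested pair of tubes. Inside each maximal tube $\cT_m^{\max}$, take the smaller tube $\cT_m$ whose boundary sits at depth $D$ (this is where the factor $2$ in ``$(2D,\log 4)$-deep'' is used; $\cT_m$ still satisfies the hypotheses of Lemma~\ref{lem:shortmulticurve}). Cut $\gamma_k$ along the $\d\cT_m$, not along $\d\cT_m^{\max}$. Now the \emph{outside} strands $\sigma_k^i\subset M\setminus\bigcup_m\cT_m$ each must traverse the depth-$D$ collar of some $\cT_m^{\max}$, exit, and traverse another such collar before meeting $\d\cT_m$ again, so $\len(\sigma_k^i)\geq 2D$. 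Close these strands with Euclidean arcs $\alpha_k^i$ on $\d\cT_m$ (length $\leq D$) to obtain loops $g_k^j\subset M_{\geq\mu}$. Because $\sigma_k^i$ is $C^1$-close to a leaf, $\int_{\sigma_k^i}\omega\geq(\norm{\rho}_0-\e_k)\len(\sigma_k^i)$, while $\big|\int_{\alpha_k^i}\omega\big|\leq\norm{\rho}_0\,D\leq\tfrac12\norm{\rho}_0\len(\sigma_k^i)$. Hence
\[
\rho(g_k^j)\ \geq\ \frac{\norm{\rho}_0-2\e_k}{2}\sum_i\len(\sigma_k^i), \qquad |g_k^j|_{M_{\geq\mu}}\ \leq\ \sum_i\big(\len(\sigma_k^i)+D\big)\ \leq\ \tfrac{3}{2}\sum_i\len(\sigma_k^i),
\]
so $\norm{\rho}_\mu\geq\rho(g_k^j)/|g_k^j|_{M_{\geq\mu}}\geq\norm{\rho}_0/3-o(1)$. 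The portion of $\gamma_k$ inside $\bigcup_m\cT_m$ is simply discarded from the estimate; its length never appears, and no ratio like $R$ is needed. That is the idea your argument is missing.
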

    \begin{proof}
        If $\Lambda(\rho)\cap M_{\geq\mu}=\emptyset$, then the  cohomology stretch lamination is a union of core curves of Margulis tubes in the $\mu$-thin part, so the conclusion holds. We therefore assume otherwise.
        We now prove that if there is a point of $\Lambda(\rho)$ in the thick part, then $\norm{\rho}_0\leq 3\norm{\rho}_\mu.$
        Let $f$ be a $C^1$ best Lipschitz map with stretch set a geodesic lamination and let $\omega=df$ be the corresponding 1-form; such a map exists by Theoreom \ref{thm:maps} and, as noted in Proposition \ref{prop:orientable}, orients $\Lambda(\rho)\subset \stretch(f)$.

        Let $p\in\Lambda(\rho)$ be a point in the $\mu$-thick part of $M$. By chain recurrence, there is a sequence $\gamma_k$ of closed curves locally $C^1$-converging to oriented segments of $\Lambda(\rho)$ which go through $p$. 
        We orient these curves so that $\omega(\dot\gamma_k(x_k))$ converges pointwise to the best Lipschitz constant $||\rho||_0$ for any sequence $x_k\in\g_k$. Because $\Lambda(\rho)$ is oriented by the continuous form $\omega$, this can be done even when the leaf of $\Lambda(\rho)$ that $\g_k$ is following changes. This then implies $K(\g_k)\to \norm{\rho}_0$.
        If these loops stay in the thick part of $M$, then the stable norm is realized in the thick part, so $\norm{\rho}_0=\norm{\rho}_\mu\leq 3\norm{\rho}_\mu$. We can therefore assume (for large $k$), the loops $\g_k$ travel into the thin part.

        Let $\cT_m^{\max}$ be the collection of maximal tubes in $M$ intersecting $M_{\leq\mu}$. Let $\cT_{m}$ be the smaller tubes whose boundaries are at depth $D$ inside $\cT_m^{\max}$. Note that the diameter of the boundary of $\cT_m$ is smaller than the diameter of $\cT_m^{\max}$ and that the conditions of Lemma \ref{lem:shortmulticurve} hold for these smaller tubes. Because $\g_k$ intersects the thin part, it must travel more than distance $D$ into some tube $\cT_m$.
        By applying Lemma \ref{lem:shortmulticurve} for each tube $\cT_m$ into which $\g_k$ travels more than distance $D$, we can replace each $\gamma_k$ with a multicurve $g_k$ homologous to $\gamma_k$ with $\len(g_k)\leq \len(\gamma_k)$ which agrees with $\g_k$ in $M-\cup\cT_m$. 

        \begin{figure}[h]
            \centering
            \includegraphics[scale=.32]{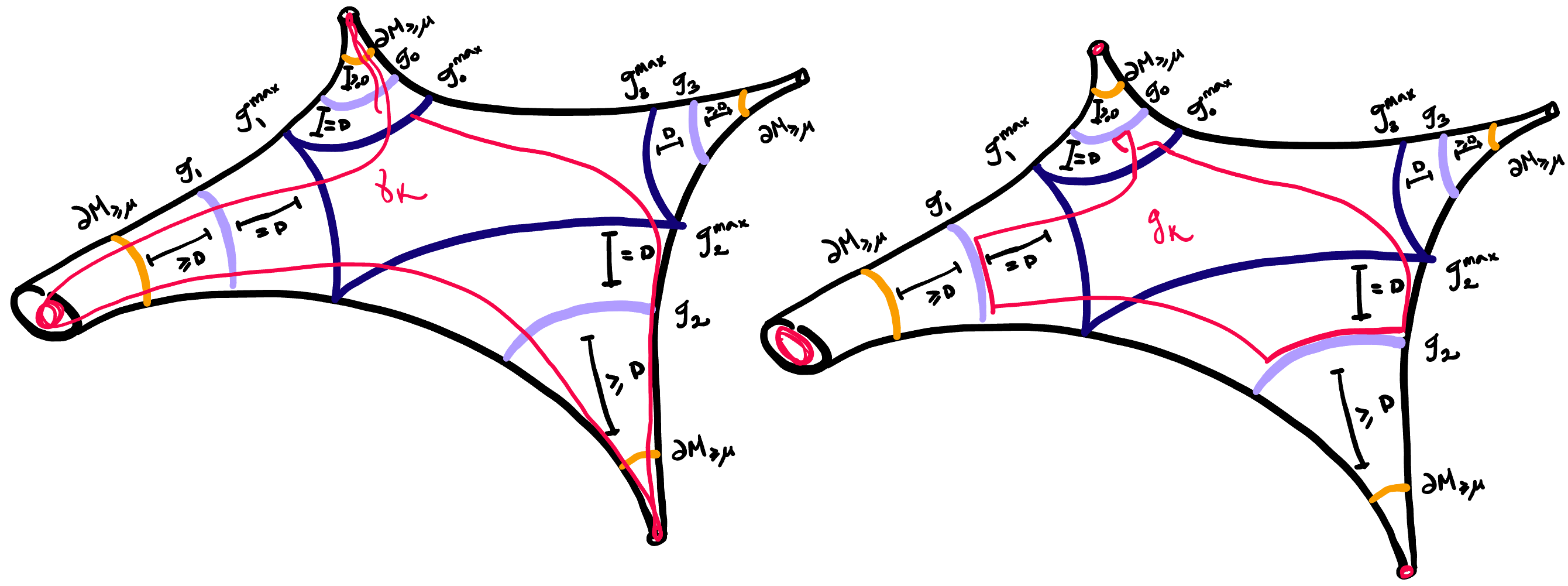}\caption{Cartoon of tubes and approximating curves in application of Lemma \ref{lem:shortmulticurve} in Theorem\nopagebreak~\ref{thm:onlycores}}\label{fig:onlycores}
        \end{figure}

        Let $\sigma_k^i$ be the sub-strands of $\gamma_k$ that are contained in $M-\cup_m\cT_m$ and let $g_k^j$ be the components of $g_k$ obtained from some union of strands $\sigma_k^i$ by attaching the Euclidean geodesic arcs between endpoints of strands on the boundaries of tubes $\cT_m$. By the depth assumption on the Margulis constant, these loops lie in the thick part. Note that if any strand intersects the boundary $\d \cT_m$ twice without exiting the tube $\cT^{\max}_m$, it cannot approximate a geodesic segment, so we assume there are no such strands. We can index and pair each strand $\sigma_k^i$ with the arc that follows it so that $\sigma_k^i$ is followed by the arc $\alpha_k^i$, the endpoint of $\alpha_k^i$ is at the beginning of the strand $\sigma_k^{i+1}$, which is then followed by the arc $\alpha_k^{i+1}$ and so on. Fix some component $g_k^j$ and assume the strands are indexed so that $g_k^j = \cup_i (\sigma_k^i\cup\alpha_i)$.

        Because the oriented loops $\g_k$ are locally $C^1$-converging to oriented segments of $\Lambda(\rho)$, there is a sequence $\e_k>0$ with $\e_k\to0$ with $k$ such that for each strand $\sigma_k^i$, there is a lower bound $$\int_{\sigma_k^i}\omega > (||\rho||_0-\e_k)\len(\sigma_k^i).$$
        Now we estimate
        \begin{align*}
            \rho(g_k^j)&= \sum_i \int_{\sigma_k^i}\omega+\int_{\alpha_k^i}\omega \\
                              &> \sum_i \left(\left(\norm{\rho}_0-\e_k\right)\len(\sigma_k^i) + \int_{\alpha_k^i}\omega\right)\\
                              &\geq \sum_i \left(\left(||\rho||_{0} - \e_k\right)\len(\sigma_k^i) -||\rho||_{0}D\right)\\
                              &= \sum_i \left(||\rho||_{0}\left(\len(\sigma_k^i) -D\right)- \e_k\len(\sigma_k^i) \right)
        \end{align*}

        where we use that $||\omega||_{\infty}=\norm{\rho}_0$ and $\len(\a_k^i)\leq D$ to bound the possible negative contribution coming from integrating $\omega$ over the arc $\alpha_k^i$.

        Next observe that the depth assumption on the tubes $\cT_m$ implies each strand $\sigma_k^i$ must travel more than distance $2D$, because it must travel from the boundary of $\cT_m$ out of a maximal tube $\cT_m^{\max}$, which requires length $D$, then it must travel into (possibly the same) maximal tube to the boundary of the corresponding smaller tube, which again requires distance $D$. This implies $\len(\sigma_k^i)\geq 2D$ and thus $$\len(\sigma_k^i)-D\geq\len(\sigma_k^i)/2.$$

        Applying this to the estimate above, we get

        $$ \rho(g_k^j) \geq \sum_i \left(||\rho||_{0}\len(\sigma_k^i)/2 - \e_k\len(\sigma_k^i)\right) = \frac{||\rho||_{0}-2\e_k}{2}\sum_i \len(\sigma_k^i).$$

                Dividing by length and noting that $$|g_k^j|_{M\geq\mu}\leq \sum_i D+\len(\sigma_k^i)\leq \frac{3}{2}\sum_i\len(\sigma_k^i)$$ we estimate

               $$ \norm{\rho}_\mu \geq \frac{\rho(g_k^j)}{|g_k^j|_{M\geq\mu}} \geq \frac{||\rho||_{0}-2\e_k}{3\sum_i\len(\sigma_k^i)}\sum_i\len(\sigma_k^i) = \norm{\rho}_0/3-2\e_k/3.$$
               This holds for \emph{every} component $g_k^j$ of $g_k$ in $M_{\geq\mu}$, so we can choose such a component for each $k$ and take the limit as $k\to\infty$ to obtain $3||\rho||_\mu\geq\norm{\rho}_0.$
    \end{proof}


    \section{Stretch laminations in surgery families}\label{sec:mainthms}
    In this section we combine our previous results and obtain our main theorems.
    The main idea is that in a Dehn surgery family, one can apply Theorem \ref{thm:onlycores} uniformly for a sequence of large fillings. Then using our norm comparisons, we can replace the stable norm condition on the filled manifold with a condition on slope lengths and the Thurston norm in the original cusped manifold.

    \laminationsintubes*
    \begin{proof}

    Using the diameter bound in Lemma \ref{lem:diambound}, we can fix a single constant $D$ such that $D\geq8\diam(\d \cT)+2\log 4$ for each tube $\cT$ in a collection of maximal tubes around the Dehn filling core curves for every hyperbolic filling of $W$. 
    By Proposition \ref{prop:deepmargulis}, there is a constant $L>0$ and a Margulis constant $\mu$ that is $L$-persistantly $(2D,\log 4)$-deep. Furthermore, we can additionally ensure the $L$-persistantly $(2D,\log 4)$-Margulis constant is smaller than $\min\{\sys(W)/4,\log(3)/1.2\}$.
    For the filled manifolds $W(\bs)$ for all slopes $\bs$ with $\nlen(\bs)>L$,  we are in the situation of Theorem \ref{thm:onlycores} for this Margulis constant $\mu$, so the conclusion follows. 
    \end{proof}

    The following uses the thick stable norm-Thurston norm comparison of Theorem \ref{thm:chainnormineqs} to give a condition relating the Thurston norm and filling slope length that ensures a sequence of $0$-surgery classes extend to classes whose cohomology stretch laminations are (eventually) a union of Dehn filling cores.

    \littleoh*

    \begin{proof}
    Set $\ell_i=\nlen(\bs_i)$.
    Let $\Lambda_i$ be the cohomology stretch lamination of $\rho_i$. Let $L$ and $\mu$ be the constants in Theorem \ref{thm:laminationsintubes}. With this Margulis constant, all the tubes intersecting the thin part in Dehn fillings with complete slopes $\bs$ with length greater than $L$ come from the Dehn filling.

    By Theorem \ref{thm:thickstableThurston}, after possibly increasing $L$, there is a constant $C$ depending on $W$ and $\mu$, such that if $\ell_i>L$ then $$||\rho_i||_\mu\leq C||\rho_i^W||_{Th}.$$

    Lemma \ref{lem:nzbound} then gives the stable norm estimate $$||\rho_i||_0 \geq \frac{n}{2\pi}(\ell_i^2-28.78),$$ where $n$ is the number of cusps.
    The condition on the growth of the Thurston norm implies there is some $N$ such that for all $i>N$ $$\frac{n}{2\pi}(\ell_i^2-28.78)> 3C||\rho_i^W||_{Th}.$$ Combining these, we have for large $i$ the estimate $$||\rho_i||_0 >  3||\rho_i||_\mu.$$

    Therefore, the stable norm condition in Theorem \ref{thm:laminationsintubes} does not hold for large $i$, so the cohomology stretch lamination must be a union of Dehn filling cores.
    \end{proof}

    We can apply the Theorem \ref{thm:littleoh} to an explicit family of Dehn fillings constructed by Brock and Dunfield \cite{BD}. For us, the point of this construction is that there are infinitely many $0$-surgery fibered classes and one can directly compare the Thurston norm of the classes with the lengths of their boundary slopes. This immediately gives the little oh condition in Theorem \ref{thm:littleoh}. These examples enjoy extra symmetry which allows us to completely identify the stretch laminations.
    By Proposition 3.5 and Remark 3.6. in \cite{FarreLandesbergMinsky}, this then completely identifies the projection to the compact base of the $\omega$-limit set $\cQ_\omega$ of quasiminimizing points in the infinite cyclic covers associated to these fibrations.

    \fiberedexamples*

    \begin{proof}

        We recall the construction of Brock and Dunfield given in \cite{BD}, Theorem 1.4.
        Let $W$ be a 2-cusped hyperbolic 3-manifold manifold fibering over the circle, such that the boundary torus inclusions induce isomorphisms on first homology:
         $$H_1(T_j ; \Z) \to H_1(W ; \Z).$$ Additionally, assume there is an orientation reversing involution that exchanges the cusps and acts on $H_1(W;\Z)$ as the identity. An explicit link exterior found by Brock and Dunfield with these properties is shown in Figure \ref{fig:link}. Since
        $W$ fibers and has Betti number 2, there is a 1-dimensional face of the Thurston norm ball such that every integral class in the interior of the cone $\cC$ over this face is a fibration.

        Fix integral classes $\alpha$ and $\beta$ interior to $\cC$ generating $H^1(W)$. There are dual curves $a$ and $b$ generating $H_1(W)$ with $\alpha(a)=\beta(b)=1$ and $\alpha(b)=\beta(a)=0$. Let $M_n$ be the Dehn filling of $ W$ with slope $\bs_n = (s_j^n)$ with the curve on the boundary tori $T_j$ homologically
        equal to $s_j^n = a - nb$. The cohomology $H^1(M_n ; \Z)$ is $\Z$ and
        is generated by the extension $\rho_n$ of the class $\rho_n^W = \alpha -n\beta$ to $M_n$. The Thurston norm is linear in the cone over the face $\cC$, so the Thurston norm of $\rho_n^W$ is exactly $n||\alpha||_{Th}+||\beta||_{Th}$.  The filling slope $\bs_n= (a-nb,a-nb)$ has total normalized length comparable to $n$, because the lengths of Euclidean geodesics on the cusp tori are comparable to the Euclidean $\ell^2$ norms of their homology classes. In particular, since the Euclidean $\ell^2$ norm and Thurston norm can be compared, the Thurston norm is $o\left(\nlen(s_n)^2\right)$.
        The homological conditions imply all of these slopes are $0$-surgery slopes. The filled manifolds also fiber via the capped off surfaces.

        Thus, the conditions of Theorem \ref{thm:littleoh} hold. Because of the involution, which extends to the Dehn filled manifolds, the core curves of the Dehn fillings having equal length. Therefore, for large $n$ the cohomology stretch lamination $\Lambda(\rho_n)$ is exactly the union of the two Dehn filling core curves.
    \end{proof}
    \begin{figure}[h]
    \centering
    \includegraphics[scale=.35]{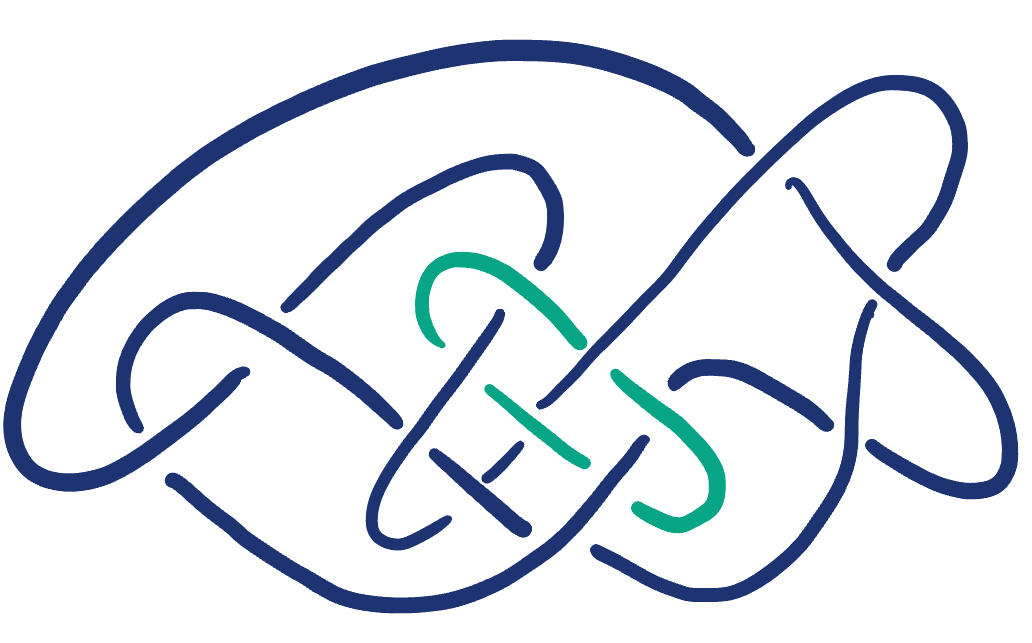}\caption{Link $L = L14n21792$, given in \cite{BD} as an example of a link whose exterior has the properties needed for Theorem \ref{thm:fiberedexamples}. See their paper for details on why this link has the desired properties.}\label{fig:link}
\end{figure}

    We end by noting that using \texttt{SnapPy} \cite{SnapPy}, one can compute the length of the core curves in these Dehn fillings and thus computationally explore the cohomology stretch of the classes in these examples.

\nocite{*}
\bibliographystyle{alpha}

\bibliography{bib}
\end{document}